\numberwithin{equation}{section}
\newlength{\leftstackrelawd}
\newlength{\leftstackrelbwd}
\def\leftstackrel#1#2{\settowidth{\leftstackrelawd}%
	{${{}^{#1}}$}\settowidth{\leftstackrelbwd}{$#2$}%
	\addtolength{\leftstackrelawd}{-\leftstackrelbwd}%
	\leavevmode\ifthenelse{\lengthtest{\leftstackrelawd>0pt}}%
	{\kern-.5\leftstackrelawd}{}\mathrel{\mathop{#2}\limits^{#1}}}
\theoremstyle{plain}
\newtheorem{thm}{Theorem}[section]
\newtheorem{lem}[thm]{Lemma}
\newtheorem{cor}[thm]{Corollary}
\newtheorem{prop}[thm]{Proposition}
\newtheorem*{thm*}{Theorem}
\theoremstyle{definition}
\newtheorem{defn}[thm]{Definition}
\newtheorem{rmk}[thm]{Remark}
\newtheorem{?}[thm]{Problem}
\newenvironment{customthm}[1]
{\innercustomthm}
{\endinnercustomthm}
\newenvironment{customlem}[1]
{\innercustomlem}
{\endinnercustomlem}
\newcommand{\ep}{\varepsilon}
\renewcommand{\phi}{\varphi}
\renewcommand{\epsilon}{\varepsilon}
\def\@cite#1#2{[\textbf{#1\if@tempswa , #2\fi}]}
\def\@biblabel#1{[\textbf{#1}]}
\newcommand*{\defeq}{\mathrel{\rlap{%
			\raisebox{0.3ex}{$\m@th\cdot$}}%
		\raisebox{-0.3ex}{$\m@th\cdot$}}%
	=}
\newcommand*{\eqdef}{=\mathrel{\rlap{%
			\raisebox{0.3ex}{$\m@th\cdot$}}%
		\raisebox{-0.3ex}{$\m@th\cdot$}}%
	}
\newcounter{marnote}
\def\underbracex#1#2{\mathop{\vtop{\m@th\ialign{##\crcr
				$\hfil\displaystyle{#2}\hfil$\crcr
				\noalign{\kern3\p@\nointerlineskip}%
				#1\crcr\noalign{\kern3\p@}}}}\limits}
\def\upbracefilla{$\m@th \setbox\z@\hbox{$\braceld$}%
	\bracelu\leaders\vrule \@height\ht\z@ \@depth\z@\hfill 
	\kern\p@\vrule \@width\p@\kern\p@\vrule \@width\p@\kern\p@\vrule \@width\p@
	$}
\def\upbracefillb{$\m@th \setbox\z@\hbox{$\braceld$}%
	\vrule \@width\p@\kern\p@\vrule \@width\p@\kern\p@\vrule \@width\p@\kern\p@
	\leaders\vrule \@height\ht\z@ \@depth\z@\hfill\bracerd
	\braceld\leaders\vrule \@height\ht\z@ \@depth\z@\hfill
	\kern\p@\vrule \@width\p@\kern\p@\vrule \@width\p@\kern\p@\vrule \@width\p@
	$}
\def\upbracefillc{$\m@th \setbox\z@\hbox{$\braceld$}%
	\vrule \@width\p@\kern\p@\vrule \@width\p@\kern\p@\vrule \@width\p@\kern\p@
	\leaders\vrule \@height\ht\z@ \@depth\z@\hfill
	\kern\p@\vrule \@width\p@\kern\p@\vrule \@width\p@\kern\p@\vrule \@width\p@
	$}
\def\upbracefilld{$\m@th \setbox\z@\hbox{$\braceld$}%
	\vrule \@width\p@\kern\p@\vrule \@width\p@\kern\p@\vrule \@width\p@\kern\p@
	\leaders\vrule \@height\ht\z@ \@depth\z@\hfill\braceru$}
\def\upbracefillbd{$\m@th \setbox\z@\hbox{$\braceld$}%
	\vrule \@width\p@\kern\p@\vrule \@width\p@\kern\p@\vrule \@width\p@\kern\p@
	\bracerd\braceld
	\leaders\vrule \@height\ht\z@ \@depth\z@\hfill\braceru$}
\def\l@subsection{\@tocline{2}{0pt}{2.5pc}{5pc}{}}
\begin{document}

	\title[Differential inclusions for the Schouten tensor]{Differential inclusions for the Schouten tensor and nonlinear eigenvalue problems \linebreak in conformal geometry} 
	
\author{Jonah A. J. Duncan}
\address{Johns Hopkins University, 404 Krieger Hall, Department of Mathematics, 3400 N. Charles Street, Baltimore, MD 21218, US.}
\curraddr{}
\email{jdunca33@jhu.edu}
\thanks{}
	
	\author{Luc Nguyen}
	\address{Mathematical Institute and St Edmund Hall, University of Oxford, Andrew Wiles Building, Radcliffe Observatory Quarter, Woodstock Road, OX2 6GG, UK.}
	\curraddr{}
	\email{luc.nguyen@maths.ox.ac.uk}
	\thanks{}
	
	\date{}

	\maketitle

	\vspace*{-8mm}\begin{abstract}
		Let $g_0$ be a smooth Riemannian metric on a closed manifold $M^n$ of dimension $n\geq 3$. We study the existence of a smooth metric $g$ conformal to $g_0$ whose Schouten tensor $A_g$ satisfies the differential inclusion $\lambda(g^{-1}A_g)\in\Gamma$ on $M^n$, where $\Gamma\subset\mathbb{R}^n$ is a cone satisfying standard assumptions. Inclusions of this type are often assumed in the existence theory for fully nonlinear elliptic equations in conformal geometry. We assume the existence of a continuous metric $g_1$  conformal to $g_0$ satisfying $\lambda(g_1^{-1}A_{g_1})\in\overline{\Gamma'}$ in the viscosity sense on $M^n$, together with a nondegenerate ellipticity condition, where $\Gamma' = \Gamma$ or $\Gamma'$ is a cone slightly smaller than $\Gamma$. In fact, we prove not only the existence of metrics satisfying such differential inclusions, but also existence and uniqueness results for fully nonlinear eigenvalue problems for the Schouten tensor. We also give a number of geometric applications of our results. We show that the solvability of the $\sigma_2$-Yamabe problem is equivalent to positivity of a nonlinear eigenvalue for the $\sigma_2$-operator in three dimensions. We also give a generalisation of a theorem of Aubin \& Ehrlick on pinching of the Ricci curvature, and an application in the study of Green's functions for fully nonlinear Yamabe problems.

	\end{abstract}
	
	\setcounter{tocdepth}{2}
	\vspace*{-2mm}\tableofcontents

	\section{Introduction}
	
	\subsection{Main results}
	Let $M^n$ be a closed manifold of dimension $n\geq 3$, $g_0$ a smooth Riemannian metric on $M^n$ and $[g_0]$ the set of smooth metrics conformal to $g_0$. The Yamabe problem, solved through the combined works of Yamabe \cite{Yam60}, Trudinger \cite{Tru68}, Aubin \cite{Aub76} and Schoen \cite{Sch84}, asserts that $[g_0]$ contains a metric $g$ of constant scalar curvature $R_g$, where the constant is positive (resp.~negative, resp.~zero) if and only if $[g_0]$ contains a metric of positive (resp.~negative, resp.~zero) scalar curvature. It is well-known that the sign of any such constant coincides with the sign of the first eigenvalue of the conformal Laplacian of any metric in $[g_0]$, and the sign of the Yamabe invariant
	\begin{equation*}
	Y(M^n,[g_0]) = \inf_{g\in[g_0]} \frac{\int_{M^n}R_g\,dv_g}{\operatorname{Vol}(M^n,g)^{\frac{n-2}{n}}}.
	\end{equation*}

	Since the work of Viaclovsky \cite{Via00a} and Chang, Gursky \& Yang \cite{CGY02a}, there has been significant interest in fully nonlinear generalisations of the Yamabe problem involving the eigenvalues of the trace-modified Schouten tensor $A_{g_0}^t$, defined for $t\leq 1$ by 
	\begin{equation}\label{300}
	A_{g_0}^t =  \frac{1}{n-2}\bigg(\operatorname{Ric}_{g_0} - \frac{tR_{g_0}}{2(n-1)}g_0\bigg). 
	\end{equation}
	Here, $\operatorname{Ric}_{g_0}$ denotes the Ricci curvature tensor of $g_0$. When $t=1$, $A_{g_0}^1$ (henceforth denoted $A_{g_0}$) is the Schouten tensor, which arises in the Ricci decomposition of the Riemann curvature tensor. Of particular interest are elliptic equations of the form
	\begin{equation}\label{16}
	\sigma_k(\lambda(g_u^{-1}A_{g_u}^t)) = \psi(x,u), \quad \lambda(g_u^{-1}A_{g_u}^t)\in\Gamma_k^+ \quad \text{on }M^n,
	\end{equation}
	where $t\leq 1$ is fixed, $\psi>0$ is a prescribed function and $g_u=e^{-2u}g_0\in[g_0]$ is the unknown metric. In \eqref{16}, $\lambda(g_u^{-1}A_{g_u}^t)$ denotes the eigenvalues of the $(1,1)$-tensor $g_u^{-1}A_{g_u}^t$, $\sigma_k:\mathbb{R}^n\rightarrow \mathbb{R}$ is the $k$'th elementary symmetric polynomial, and 
	\begin{equation*}
	\Gamma_k^+ = \{\lambda = (\lambda_1,\dots,\lambda_n)\in\mathbb{R}^n:\sigma_j(\lambda)>0\text{ for all }1\leq j \leq k\}. 
	\end{equation*}
	Note that $A_{g_u}^t$ and $A_{g_0}^t$ are related by the conformal transformation law
	\begin{equation}\label{301}
	A_{g_u}^t = \nabla_{g_0}^2 u + \frac{1-t}{n-2}\Delta_{g_0} u\,g_0 - \frac{2-t}{2}|du|_{g_0}^2g_0 + du\otimes du + A_{g_0}^t. 
	\end{equation}
	
	When $k=1$ and $\psi$ is a positive constant, \eqref{16} is the Yamabe equation in the case of positive Yamabe invariant. When $2\leq k \leq n$ and $t\leq1$, \eqref{16} is fully nonlinear and non-uniformly elliptic, and when $\psi$ is constant it is often referred to as the (trace-modified, when $t<1$) $\sigma_k$-Yamabe equation\footnote{Equation \eqref{16} is also elliptic when $t\geq n-1$, although this case is different in nature due to negativity of the scalar curvature, and is not considered in this paper.}.  
	
	In existing literature addressing \eqref{16}, it has been customary to assume that there exists a conformal metric $g\in[g_0]$ satisfying
	\begin{equation}\label{94}
	\lambda(g^{-1}A_{g}^t)\in\Gamma_k^+ \quad \text{on }M^n. 
	\end{equation} 
	Note that when $k=1$, \eqref{94} is precisely the assumption that $g$ has positive scalar curvature. Under the assumption \eqref{94} with $t=1$, the $\sigma_k$-Yamabe equation has been solved in various cases -- see e.g.~Chang, Gursky \& Yang \cite{CGY02b}, Ge \& Wang \cite{GW06}, Guan \& Wang \cite{GW03a}, Gursky \& Viaclovsky \cite{GV07}, Li \& Li \cite{LL03}, Li \& Nguyen \cite{LN14}, and Sheng, Trudinger \& Wang \cite{STW07}. More generally, under \eqref{94} the trace-modified $\sigma_k$-Yamabe equation has also been solved in various cases -- see e.g.~\cite{LL03, GV07, LN14}. For recent related works, see e.g.~\cite{GS18, FW19, CW19, LN20, LNW20, DN21, LNW21} and the references therein.

	It is an interesting and important problem to determine when $[g_0]$ contains a smooth metric satisfying \eqref{94} for $k\geq 2$; we will discuss previous integral-type results related to this problem slightly later in the introduction. In this paper, we are interested in using viscosity-type conditions to establish the existence of a smooth metric in $[g_0]$ satisfying \eqref{94}. Namely, we assume the existence of a continuous metric $g_{\hat{u}} = e^{-2\hat{u}}g_0$ satisfying \begin{equation}\label{40'}
	\lambda(g_{\hat{u}}^{-1}A_{g_{\hat{u}}})\in \overline{\Gamma_k^+} \quad\text{ in the viscosity sense on }M^n
	\end{equation}
	(see Section \ref{scp} for the meaning of \eqref{40'}). The notion of viscosity solutions to fully nonlinear Yamabe equations was first studied by Li in \cite{Li09}. We highlight that many of our results in this paper are new even in the case that $g_{\hat{u}}$ is smooth. Moreover, in light of \cite{Li09} and subsequent work on viscosity solutions to fully nonlinear Yamabe equations, such an additional smoothness assumption would not substantially simplify our treatment.
	
	Part of our motivation for the condition \eqref{40'} stems from the expectation that, in the study of compactness issues for \eqref{16}, an appropriately rescaled sequence of solutions to \eqref{16} (either for $t=1$ or as $t\rightarrow 1$) converges to a possibly non-smooth metric $g_{\hat{u}} = e^{-2\hat{u}}g_0$ satisfying \eqref{40'}. This has been observed in a number of situations, see for instance \cite{GV07, TW09, LL03, LN14, LN20, CGY02b, GLW10}. Since compactness issues play an important role in the existence theory for \eqref{16}, it is desirable to understand the gap between \eqref{40'} and \eqref{94}. The study of continuous metrics satisfying \eqref{40'} is also closely related to the study of Green's functions for fully nonlinear Yamabe problems -- see the recent work of Li \& Nguyen \cite{LN20}, and Theorem \ref{305} below for a related result.
	
	Our first main result is as follows: 
	
	\begin{thm}\label{f'}
		Let $(M^n,g_0)$ be a smooth, closed Riemannian manifold of dimension $n\geq 3$ and let $2\leq k \leq n$. Suppose there exists a metric $g_{\hat{u}} = e^{-2\hat{u}}g_0$, $\hat{u}\in C^0(M^n)$, satisfying \eqref{40'}. Then the following statements hold: \medskip
		\begin{enumerate}
			\item[1.] For given $t<1$, there exists a smooth metric $g_{t}\in[g_0]$ satisfying $\lambda(g_{t}^{-1}A_{g_{t}}^{t})\in\Gamma_k^+$ on $M^n$ if and only if $\hat{u}$ is not a smooth solution to $\operatorname{Ric}_{g_{\hat{u}}}\equiv 0$ on $M^n$. \medskip
			
			\item[2.] There exists a smooth metric $g\in[g_0]$ satisfying $\lambda(g^{-1}A_g)\in\Gamma_k^+$ on $M^n$ if and only if there is no $C^{1,1}$ metric $g$ conformal to $g_0$ satisfying $\lambda(g^{-1}A_g)\in\partial\Gamma_k^+$ a.e.~on $M^n$. 
		\end{enumerate} 
	\end{thm}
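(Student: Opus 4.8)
The plan is to treat both statements via a continuity/deformation argument combined with the strong comparison principle for viscosity solutions (developed in Section \ref{scp}) and a priori estimates for the $\sigma_k$-Yamabe equation. The easy direction in each case is the ``only if'': if a smooth $g_t$ with $\lambda(g_t^{-1}A_{g_t}^t)\in\Gamma_k^+$ exists then $g_t$ is an admissible subsolution that is \emph{not} on the boundary of the cone, and one checks directly that $g_{\hat u}$ cannot be flat (for part 1, $\mathrm{Ric}\equiv 0$ forces $\lambda(g_{\hat u}^{-1}A_{g_{\hat u}})\equiv 0\in\partial\Gamma_k^+$, and the strong comparison principle applied to $g_{\hat u}$ and a nearby metric built from $g_t$ would be violated); similarly for part 2, a smooth $g$ in the open cone rules out a $C^{1,1}$ metric on $\partial\Gamma_k^+$ by the same comparison principle. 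So the substance is the converse.

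For the ``if'' direction I would proceed as follows. First, exploit the hypothesis \eqref{40'}: $g_{\hat u}$ is a viscosity supersolution of the degenerate equation $\lambda\in\overline{\Gamma_k^+}$, equivalently a viscosity subsolution in the conformal-geometry sign convention, so it plays the role of a barrier. Second, I would run a continuity method on a family of equations $\sigma_k(\lambda(g_u^{-1}A_{g_u}^t))^{1/k} = \tau$ (or with $t$ interpolating toward $1$), where $\tau>0$ is small; the point is that one needs to \emph{produce} a strictly admissible metric to start the method, and this is exactly where $g_{\hat u}$ enters. Using the local solvability/perturbation results available once one has a viscosity subsolution — together with the assumption that $\hat u$ is not a smooth Ricci-flat solution — I would construct, by a small localized conformal perturbation of $g_{\hat u}$ supported near a point where the defining inequality is ``strictly'' satisfied in the viscosity sense (such a point must exist, else $g_{\hat u}$ would be a genuine solution of $\lambda\in\partial\Gamma_k^+$ everywhere, forcing, via the strong comparison principle and a Liouville-type rigidity, $\mathrm{Ric}_{g_{\hat u}}\equiv 0$), a smooth metric $g_1$ with $\lambda(g_1^{-1}A_{g_1}^t)\in\Gamma_k^+$ for $t<1$. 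Then the standard existence theory for the trace-modified $\sigma_k$-Yamabe equation under \eqref{94} (e.g.\ \cite{LL03,GV07,LN14}) applies to upgrade this to the desired conclusion. For part 2 ($t=1$), the same scheme works but the obstruction to starting the continuity method is now precisely the non-existence of a $C^{1,1}$ metric on $\partial\Gamma_k^+$: if every attempt to perturb $g_{\hat u}$ into the open cone fails, a compactness/closedness argument produces a $C^{1,1}$ limiting metric with $\lambda(g^{-1}A_g)\in\partial\Gamma_k^+$ a.e., contradicting the hypothesis.

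The main obstacle I anticipate is the rigidity step: showing that if $g_{\hat u}$ satisfies \eqref{40'} but admits \emph{no} smooth (or $C^{1,1}$) conformal perturbation into the open cone $\Gamma_k^+$, then $g_{\hat u}$ must in fact be a (smooth) solution of the degenerate boundary equation $\lambda(g_{\hat u}^{-1}A_{g_{\hat u}})\in\partial\Gamma_k^+$ with the extra structure forcing $\mathrm{Ric}_{g_{\hat u}}\equiv 0$ (for $t<1$) or $\lambda\in\partial\Gamma_k^+$ a.e.\ (for $t=1$). This requires: (i) the strong comparison principle from Section \ref{scp} to propagate the boundary condition globally once it holds on a relatively open set; (ii) regularity theory for degenerate $\sigma_k$ equations to boost $\hat u$ from $C^0$ to $C^{1,1}$ and then, using the specific structure of $\partial\Gamma_k^+$ (in particular $\sigma_k(\lambda)=0$ with $\lambda\in\overline{\Gamma_{k-1}^+}$), to rule out nontrivial solutions except the flat one when $t<1$ — here the term $\frac{1-t}{n-2}\Delta_{g_0}u\,g_0$ in \eqref{301} is what breaks the degeneracy and yields the dichotomy ``flat or strictly admissible''. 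A secondary technical point is ensuring all perturbations can be made while preserving the viscosity inequalities and that the a priori estimates are uniform down to $\tau\to 0$, which I would handle by the Evans–Krylov and local gradient estimates standard in this setting (cf.\ \cite{GV07,LN14}), combined with the global barrier provided by $g_{\hat u}$ to control $\|u\|_{C^0}$.
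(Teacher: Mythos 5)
Your high-level intuition is in the right place---viscosity barrier, strong comparison principle, and a rigidity step that forces either strict admissibility or the boundary case---but the mechanism you propose has two genuine gaps that the paper deliberately works around, and the actual proof is structured quite differently.

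\textbf{The continuity method does not apply.} You propose to run a continuity method on $\sigma_k(\lambda(g_u^{-1}A^t_{g_u}))^{1/k}=\tau$ with constant right-hand side $\tau$. The linearisation of this operator at any solution $u$ annihilates the constant function (replacing $u$ by $u+c$ does not change the left-hand side), so the linearised operator $C^{2,\alpha}\to C^\alpha$ has a nontrivial kernel and openness in the continuity method fails. The paper is explicit about this: because the right-hand side is not proper, continuity is replaced by a degree-theoretic argument applied to equations of the form \eqref{2'} with $h$ satisfying (C1)--(C3), specifically $h(x,z)=h_0e^{2z}$ along the path \eqref{ee1'}, for which the linearisation in \eqref{43} has a strictly negative zeroth-order coefficient. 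The eigenvalue problem \eqref{5} is recovered only afterward as a limit $\beta\to 0^+$ of solutions with $h=e^{\beta z}$ (Section~\ref{103}).

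\textbf{You cannot start from a local perturbation of $g_{\hat u}$.} The idea of producing a smooth strictly admissible metric by a ``small localized conformal perturbation of $g_{\hat u}$ supported near a point where the inequality is strictly satisfied in the viscosity sense'' does not survive scrutiny. First, $\hat u$ is only $C^0$, so one cannot mollify it and expect to control the Schouten tensor pointwise, and a perturbation supported in a small ball does nothing to fix admissibility elsewhere. Second, the viscosity inequality \eqref{40'} is not strict and gives no local ``slack'' to convert into pointwise strict admissibility. The paper sidesteps the starting-point problem entirely: by Proposition~\ref{F} (proved via Lemma~\ref{86} and Lemma~\ref{506}), \eqref{40'} with $\Gamma\not=\Gamma_1^+$ forces either $Y(M^n,[g_0])>0$ or $g_{\hat u}$ smooth with $\operatorname{Ric}_{g_{\hat u}}\equiv 0$. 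In the former case one may take $R_{g_0}>0$, and then for $\delta>0$ small enough one has $\lambda^\delta(g_0^{-1}A_{g_0})\in\Gamma_n^+$, so $u\equiv 0$ solves \eqref{ee1'} at $\tau=\delta$ and the degree is computed there. No perturbation of $\hat u$ is ever attempted; $\hat u$ enters only inside the \emph{a priori} $C^0$ lower bound (Proposition~\ref{E}, Step~2), where a failure of that bound produces a $C^{1,1}$ limit $\widetilde u$ with $\lambda(\widetilde g^{-1}A_{\widetilde g})\in\partial\Gamma^t$ a.e., after which Theorem~\ref{t20} forces $\hat u=\widetilde u+c$ and Lemma~\ref{506} yields the Ricci-flat contradiction. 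This is the precise role of the strong comparison principle, and it is quite different from the way you invoke it.

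In short: the ``only if'' directions you handle correctly, and you have identified that the crux is a rigidity-plus-barrier argument, but the ``if'' direction as written fails both at the step of producing a strictly admissible starting metric and at the step of running a continuity method. You would need to replace these with (a) the Yamabe-positive/Ricci-flat dichotomy of Proposition~\ref{F} to reduce to $Y(M^n,[g_0])>0$, and (b) a degree argument for a proper auxiliary equation, with the hypothesis \eqref{40'} invoked only in the lower $C^0$ bound via Theorem~\ref{t20} and Lemma~\ref{506}.
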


\begin{rmk}
	We note that, by the strong comparison principle, if there exists a smooth metric $g\in[g_0]$ satisfying $\lambda(g^{-1}A_g)\in\Gamma_k^+$ on $M^n$, then $Y(M^n,[g_0])>0$ and there is no $C^0$ metric $g$ conformal to $g_0$ satisfying $\lambda(g^{-1}A_g)\in\mathbb{R}^n\backslash\Gamma_k^+$ in the viscosity sense on $M^n$. The converse also holds (without assuming \eqref{40'}) -- see Theorem \ref{201}. 
\end{rmk}

To put Theorem \ref{f'} into context, we now briefly discuss some previous work on the existence of conformal metrics satisfying \eqref{94}. In \cite{CGY02a}, Chang, Gursky and Yang studied the case $k=2$, $n=4$ and $t=1$: they showed that if $Y(M^4,[g_0])>0$ and $\int_{M^4}\sigma_2(\lambda(g_0^{-1}A_{g_0}))\,dv_{g_0}>0$, then there exists a conformal metric $g\in[g_0]$ satisfying $\lambda(g^{-1}A_g)\in\Gamma_2^+$ on $M^4$. An alternative proof encompassing the case $t\leq 1$ was given by Gursky \& Viaclovsky in \cite{GV03}. Existence results under similar integral-type conditions were later established in three dimensions by Catino \& Djadli \cite{CD10} and Ge, Lin \& Wang \cite{GLW10}. In \cite{GLW10}, under the assumption that $R_{g_0}>0$ and $\int_{M^3}\sigma_2(\lambda(g_0^{-1}A_{g_0}))\,dv_{g_0}> 0$, the authors showed the existence of a smooth metric $g\in[g_0]$ satisfying $\lambda(g^{-1}A_g)\in\Gamma_2^+$ on $M^3$. If $g_0$ only satisfies $R_{g_0}>0$ and $\int_{M^3}\sigma_2(\lambda(g_0^{-1}A_{g_0}))\,dv_{g_0} \geq 0$, and $g_0$ cannot be conformally deformed to a different background metric for which the previous case applies, then $g_0$ is an optimiser for the functional $Y_{2,1}([g_0])$ considered in \cite{GLW10} and therefore satisfies $\lambda(g_0^{-1}A_{g_0})\in\partial\Gamma_2^+$ on $M^3$. It follows that $\lambda(g_0^{-1}A_{g_0}^t)\in\Gamma_2^+$ for all $t<1$; compare with \cite{CD10} for $t\leq t_0\approx 0.7$. In dimensions $n\geq 5$, similar existence results are only known to hold under integral-type conditions which are assumed to hold for \textit{all} metrics in $[g_0]$ -- see \cite{She08, GLW10}. Clearly, for continuous metrics $g_{\hat{u}}$, integral quantities such as $\int_{M^n}\sigma_k(\lambda(g_{\hat{u}}^{-1}A_{g_{\hat{u}}}))\,dv_{g_{\hat{u}}}$ may not even be well-defined\footnote{We will see later that there is a connection between our viscosity-type condition \eqref{40'} and certain integral conditions. In particular, we recover and extend the existence results of Ge, Lin \& Wang \cite{GLW10} and Sheng \cite{She08} for $k=2$ in dimensions $n\geq 5$ -- see Theorem \ref{4'} in Section \ref{52}.}.
	
	The following is an immediate consequence of the first statement in Theorem \ref{f'}:
	
	\begin{cor}\label{504'}
		In addition to the hypotheses of Theorem \ref{f'} suppose $Y(M^n,[g_0])>0$. Then for each $t<1$, there exists a smooth metric $g_{t}\in[g_0]$ with $\lambda(g_{t}^{-1}A_{g_{t}}^{t})\in\Gamma_k^+$ on $M^n$.
	\end{cor}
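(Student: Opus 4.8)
The plan is to read the corollary off the first statement of Theorem \ref{f'}, so that the only thing to verify is that the sole obstruction permitted by that statement cannot occur when $Y(M^n,[g_0])>0$. Fix $t<1$. By Theorem \ref{f'}(1), the desired smooth metric $g_t\in[g_0]$ with $\lambda(g_t^{-1}A_{g_t}^t)\in\Gamma_k^+$ on $M^n$ exists unless $\hat u$ is a smooth solution of $\operatorname{Ric}_{g_{\hat u}}\equiv 0$ on $M^n$. It therefore suffices to rule out the latter possibility.

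Suppose then that $\hat u\in C^\infty(M^n)$ and $\operatorname{Ric}_{g_{\hat u}}\equiv 0$. Tracing with respect to $g_{\hat u}$ gives $R_{g_{\hat u}}\equiv 0$, so $g_{\hat u}=e^{-2\hat u}g_0$ is a smooth scalar-flat metric conformal to $g_0$. Substituting $g=g_{\hat u}$ into the definition of the Yamabe invariant then gives $Y(M^n,[g_0])\le \int_{M^n}R_{g_{\hat u}}\,dv_{g_{\hat u}}\,/\,\operatorname{Vol}(M^n,g_{\hat u})^{\frac{n-2}{n}}=0$, contradicting the hypothesis $Y(M^n,[g_0])>0$. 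Hence $\hat u$ is not a smooth solution of $\operatorname{Ric}_{g_{\hat u}}\equiv 0$, and Theorem \ref{f'}(1) produces the metric $g_t$ for every $t<1$.

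I expect no real obstacle here: all the analytic content is already packaged in Theorem \ref{f'}, and the only additional input is the elementary observation that a smooth Ricci-flat metric is scalar-flat, hence makes the Yamabe quotient vanish. In particular, since the alternative appearing in Theorem \ref{f'}(1) already presupposes that $\hat u$ is smooth, no regularity theory for the original continuous metric $g_{\hat u}$ is needed at this step.
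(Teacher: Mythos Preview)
Your argument is correct and is exactly what the paper has in mind: it states that the corollary is ``an immediate consequence of the first statement in Theorem \ref{f'}'', and the only detail to supply is that a smooth Ricci-flat conformal metric forces $Y(M^n,[g_0])\le 0$, which you have done.
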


As far as the authors are aware, even in the case that $\hat{u}$ is smooth, Corollary \ref{504'} was not previously known in dimensions $n\geq 4$ (the case $n=3$ follows from \cite{GLW10}, in light of the discussion above).

	As remarked above, the strong comparison principle implies that if $g_{\hat{u}}$ satisfies \linebreak $\lambda(g_{\hat{u}}^{-1}A_{g_{\hat{u}}})\in\partial\Gamma_k^+$ in the viscosity sense on $M^n$, then there is no smooth metric $g\in[g_0]$ satisfying $\lambda(g^{-1}A_g)\in\Gamma_k^+$ on $M^n$. This fact, combined with the second statement in Theorem \ref{f'}, yields the following:
	
	\begin{cor}\label{504}
		Let $(M^n,g_0)$ be a smooth, closed Riemannian manifold of dimension $n\geq 3$ and let $2\leq k \leq n$. Suppose there exists a metric $g_{\hat{u}} = e^{-2\hat{u}}g_0$, $\hat{u}\in C^0(M^n)$, satisfying $\lambda(g_{\hat{u}}^{-1}A_{g_{\hat{u}}})\in\partial\Gamma_k^+$ in the viscosity sense on $M^n$. Then there exists a $C^{1,1}$ metric $g=e^{-2u}g_0$ satisfying $\lambda(g^{-1}A_g)\in\partial\Gamma_k^+$ a.e.~on $M^n$. 
	\end{cor}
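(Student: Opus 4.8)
The plan is to obtain Corollary \ref{504} by feeding the stated viscosity hypothesis into the second part of Theorem \ref{f'}, the only real task being to verify that the hypotheses of Theorem \ref{f'} and of its accompanying Remark are met; the substantive analysis (the strong comparison principle and the equivalence in Theorem \ref{f'}(2)) has already been carried out.

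First I would check that $g_{\hat u}$ satisfies \eqref{40'}, so that Theorem \ref{f'} applies. Since $\partial\Gamma_k^+\subset\overline{\Gamma_k^+}$, the assumption $\lambda(g_{\hat u}^{-1}A_{g_{\hat u}})\in\partial\Gamma_k^+$ in the viscosity sense implies in particular that $\lambda(g_{\hat u}^{-1}A_{g_{\hat u}})\in\overline{\Gamma_k^+}$ in the viscosity sense; here I would invoke the conventions of Section \ref{scp}, noting that enlarging the target closed set preserves the relevant one-sided viscosity property. Thus Theorem \ref{f'}(2) is available. Next I would rule out its first alternative. Because $\Gamma_k^+$ is open, $\partial\Gamma_k^+\subset\mathbb{R}^n\backslash\Gamma_k^+$, and hence the hypothesis also yields that $g_{\hat u}$ satisfies $\lambda(g_{\hat u}^{-1}A_{g_{\hat u}})\in\mathbb{R}^n\backslash\Gamma_k^+$ in the viscosity sense. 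By the strong comparison principle --- exactly as recorded in the Remark following Theorem \ref{f'} --- the presence of such a continuous metric $g_{\hat u}$ is incompatible with the existence of a smooth $g\in[g_0]$ satisfying $\lambda(g^{-1}A_g)\in\Gamma_k^+$ on $M^n$. So no such smooth metric exists.

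Finally, I would close the argument with the biconditional in Theorem \ref{f'}(2): since there is no smooth metric with $\lambda(g^{-1}A_g)\in\Gamma_k^+$, there must exist a $C^{1,1}$ metric $g=e^{-2u}g_0$ conformal to $g_0$ with $\lambda(g^{-1}A_g)\in\partial\Gamma_k^+$ a.e.\ on $M^n$, which is precisely the assertion of Corollary \ref{504}. The only delicate point is the bookkeeping in the previous paragraph, namely confirming that under the Section \ref{scp} conventions the set-valued condition ``$\lambda\in\partial\Gamma_k^+$ in the viscosity sense'' decomposes into the two one-sided viscosity statements ``$\lambda\in\overline{\Gamma_k^+}$ in the viscosity sense'' and ``$\lambda\in\mathbb{R}^n\backslash\Gamma_k^+$ in the viscosity sense''; this is a routine unwinding of definitions rather than a genuine obstacle, and with it in place the corollary is immediate.
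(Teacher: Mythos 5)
Your proposal is correct and follows essentially the same route as the paper: one reads off from Definition \ref{311} that being a viscosity solution of $\lambda\in\partial\Gamma_k^+$ is, by definition, the conjunction of the subsolution statement $\lambda\in\overline{\Gamma_k^+}$ in the viscosity sense (which is \eqref{40'}) and the supersolution statement $\lambda\in\mathbb{R}^n\backslash\Gamma_k^+$ in the viscosity sense; the latter plus the strong comparison principle rules out a smooth metric with $\lambda\in\Gamma_k^+$, and Theorem~\ref{f'}(2) then produces the required $C^{1,1}$ metric. The only wrinkle is cosmetic: the appeal to set inclusions such as $\partial\Gamma_k^+\subset\overline{\Gamma_k^+}$ is a slight misdirection, since the decomposition into a subsolution and a supersolution is purely definitional rather than a monotonicity-in-the-target-set property, but you acknowledge this at the end and it does not affect the substance of the argument.
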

	
	\begin{rmk}\label{504''}
		It is an open problem as to whether all continuous viscosity solutions $g=e^{-2u}g_0$ to $\lambda(g^{-1}A_g)\in\partial\Gamma_k^+$ belong to $C^{1,1}$, and if solutions $u$ are unique up to addition of constants. The question of uniqueness is related to the known failure of the strong comparison principle for sub/supersolutions to the equation $\lambda(g^{-1}A_g)\in\partial\Gamma_k^+$ -- see \cite{LN09}. We note that, for $t<1$, uniqueness and $C^{1,1}$ regularity of continuous viscosity solutions to $\lambda(g^{-1}A_g^t)\in\partial\Gamma_k^+$ follow from Corollary \ref{505'} and Proposition \ref{60}. 
	\end{rmk}

	In the proof of Theorem \ref{f'} we will replace $\Gamma_k^+$ with more general cones $\Gamma$ satisfying the following properties (which we assume throughout the paper): 
	\begin{align}
	& \Gamma\!\subset\!\mathbb{R}^n \text{ is an open, convex, connected symmetric cone with vertex at the origin}\label{A} \\
	& \Gamma_n^+ \subseteq \Gamma \subseteq \Gamma_1^+. \label{B}
	\end{align}
	
	\noindent More precisely, for fixed $t\leq 1$ we are concerned with the existence of smooth conformal metrics $g\in[g_0]$ satisfying 
	\begin{equation}\label{79}
	\lambda(g^{-1}A_g^t)\in\Gamma \quad\text{on }M^n,
	\end{equation}
	under the assumption that there exists a metric $g_{\hat{u}} = e^{-2\hat{u}}g_0$, $\hat{u}\in C^0(M^n)$, satisfying 
	\begin{equation}\label{40}
	\lambda(g_{\hat{u}}^{-1}A_{g_{\hat{u}}}) \in \overline{\Gamma}\quad \text{in the viscosity sense on }M^n. 
	\end{equation} 
	Equivalently\footnote{The equivalence \eqref{79} and \eqref{44} is easily seen by observing $A_g^t = \tau^{-1}[\tau A_g + (1-\tau)\sigma_1(\lambda(g^{-1}A_g)) g]$ for $\tau= \tau(t) = (1+\frac{1-t}{n-2})^{-1}$.}, for $\tau\in[0,1]$, $\lambda\in\mathbb{R}^n$ and $e=(1,\dots,1)\in\mathbb{R}^n$, we define
	\begin{equation*}
	\lambda^\tau = \tau\lambda +(1-\tau)\sigma_1(\lambda)e\quad\text{and}\quad \Gamma^\tau = \{\lambda\in\mathbb{R}^n: \lambda^\tau \in\Gamma\},
	\end{equation*} 
	and we consider the existence of smooth conformal metrics $g\in[g_0]$ satisfying 
	\begin{equation}\label{44}
	\lambda(g^{-1}A_{g})\in\Gamma^\tau\quad\text{on }M^n
	\end{equation}
	under \eqref{40}. Clearly, $\Gamma^1 = \Gamma$ and $\Gamma^0 = \Gamma_1^+$. We note that the inclusion $\Gamma\subseteq \Gamma_1^+$ in \eqref{B} implies the monotonicity property $\Gamma^\tau\subseteq\Gamma^{\tau'}$ for $\tau' \leq \tau \leq 1$, and the properties \eqref{A} and \eqref{B} are inherited by $\Gamma^\tau$ for $\tau<1$. 
	
	In fact, under \eqref{40} we will obtain existence and uniqueness results for a class of fully nonlinear equations, whose solutions satisfy the differential inclusion \eqref{44}. To this end, for $\Gamma$ as above, suppose that $f:\mathbb{R}^n\rightarrow\mathbb{R}$ satisfies the following properties:
	\begin{align} 
	& f\in C^\infty(\Gamma)\cap C^0(\overline{\Gamma}) \text{ is concave, $1$-homogeneous and symmetric in the }\lambda_i,\label{C} \\[6pt]
	& f>0 \text{ in }\Gamma, \quad f=0 \text{ on }\partial\Gamma \quad\text{and}\quad  f_{\lambda_i}>0 \text{ in }\Gamma \text{ for } 1 \leq i\leq n.\label{D}
	\end{align}
	(For the existence of such a defining function for $\Gamma$ satisfying \eqref{A} and \eqref{B}, see \cite[Appendix A]{LN20}). Of particular interest are equations of the form
	\begin{equation}\label{2-}
	f^\tau\big(\lambda(g_0^{-1}A_{g_{u_\tau}})\big)= \mu e^{au_\tau}, \quad \lambda(g_0^{-1}A_{g_{u_\tau}})\in\Gamma^\tau, 
	\end{equation}
	where 
	\begin{equation*}
	f^\tau(\lambda) = f(\lambda^\tau),
	\end{equation*}
	$\mu>0$ is a constant, $u_\tau$ is the unknown function, $g_{u_\tau} = e^{-2u_\tau}g_0$, and either $a=0$ or $a=-2$. When $a=-2$, \eqref{2-} is critical and includes the (trace-modified) $\sigma_k$-Yamabe equation as a special case (namely when $f=\sigma^{1/k}_k$). In this paper, we consider the case $a=0$. In this case, both the determination of the function $u_\tau$ and the constant $\mu$ are part of the existence problem for \eqref{2-}, and \eqref{2-} is often referred to as a \textit{nonlinear eigenvalue problem}. Since the work of Lions \cite{L85} and Lions, Trudinger \& Urbas \cite{LTU86}, fully nonlinear elliptic eigenvalue problems (including for the $\sigma_k$-Yamabe operator) have attracted much interest -- for a partial list of references, see \cite{QS08, Arm09, EFQ10, GLW10, Sir10} and the references therein. 
	
	\begin{rmk}
		We note that when the pair $(f,\Gamma)$ satisfies the properties \eqref{A}, \eqref{B}, \eqref{C} and \eqref{D}, so too does the pair $(f^\tau, \Gamma^\tau)$ for $\tau\in[0,1]$. Moreover, whenever $\tau<1$, the pair $(f^\tau,\overline{\Gamma^\tau})$ is locally strictly elliptic (see e.g.~\cite[equation (A.1)]{LN20}).
	\end{rmk} 
	
	Since we will only be concerned with \eqref{2-} in the case $a=0$, we state this equation separately for later reference:
	\begin{equation}\label{5}
	f^\tau(\lambda(g_0^{-1}A_{g_{u_\tau}}))= \mu_\tau, \quad \lambda(g_0^{-1}A_{g_{u_\tau}})\in\Gamma^\tau.
	\end{equation}
	Note that we have denoted the constant in \eqref{5} by $\mu_\tau$ rather than $\mu$; this reflects the fact the constant $\mu_\tau>0$ for which \eqref{5} admits a solution depends uniquely on $\tau$. Indeed, we prove the following existence and uniqueness result for \eqref{5}:

	\begin{thm}\label{e}
		Let $(M^n,g_0)$ be a smooth, closed Riemannian manifold of dimension $n\geq 3$ and let $(f,\Gamma)$ satisfy \eqref{A}, \eqref{B}, \eqref{C} and \eqref{D} with $\Gamma \not= \Gamma_1^+$. Suppose that there exists a metric $g_{\hat{u}} = e^{-2\hat{u}}g_0$, $\hat{u}\in C^0(M^n)$, satisfying \eqref{40}. Then the following statements hold: \medskip 
		\begin{enumerate}
			\item[1.] For given $\tau\in(0,1)$, there exists a constant $\mu_\tau>0$ and $u_\tau\in C^\infty(M^n)$ satisfying \eqref{5} if and only if $\hat{u}$ is not a smooth solution to $\operatorname{Ric}_{g_{\hat{u}}} \equiv 0$ on $M^n$.  \medskip
			
			\item[2.] There exists a constant $\mu_\tau>0$ and $u_\tau\in C^\infty(M^n)$ satisfying \eqref{5} for each $\tau\in(0,1]$ if and only if there is no $C^{1,1}$ metric $g$ conformal to $g_0$ satisfying $\lambda(g^{-1}A_g)\in\partial\Gamma$ a.e.~in $M^n$. \medskip
		\end{enumerate}
		Moreover, the eigenfunction/eigenvalue pair is unique in the following sense: if $(u_\tau,\mu_\tau)$, $(\check{u}_\tau,\check{\mu}_\tau) \in C^\infty(M^n)\times (0,\infty)$ both satisfy \eqref{5}, then $\check{\mu}_\tau = \mu_\tau$ and $\check{u}_\tau=u_\tau+c$ for some constant $c\in\mathbb{R}$. 
	\end{thm}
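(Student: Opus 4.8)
The plan is to decompose Theorem~\ref{e} into (i) an existence-and-a~priori-estimate theory for the nonlinear eigenvalue problem \eqref{5} under the assumption that a smooth, strictly $\Gamma^\tau$-admissible conformal metric (a sub-solution) exists, and (ii) a reduction, using the viscosity hypothesis \eqref{40} and the general-cone form of the analysis behind Theorem~\ref{f'}, of the two dichotomies to the existence of such a sub-solution; uniqueness is then handled separately. The necessity directions are short: if $(u_\tau,\mu_\tau)$ solves \eqref{5} then $\lambda(g_0^{-1}A_{g_{u_\tau}})\in\Gamma^\tau\subseteq\Gamma_1^+$ pointwise, so $R_{g_{u_\tau}}>0$ and $Y(M^n,[g_0])>0$; since a smooth closed Ricci-flat manifold is scalar-flat and hence has vanishing Yamabe invariant, $\hat u$ cannot be a smooth solution of $\operatorname{Ric}_{g_{\hat u}}\equiv0$, which is the ``only if'' part of statement~1. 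If in addition the case $\tau=1$ is attained, $g_{u_1}$ is a smooth metric with $\lambda(g_{u_1}^{-1}A_{g_{u_1}})\in\Gamma$, and the strong comparison principle then precludes a $C^{1,1}$ metric $g$ with $\lambda(g^{-1}A_g)\in\partial\Gamma$ a.e.\ (cf.\ Theorem~\ref{201} and the remark after Theorem~\ref{f'}); this gives the ``only if'' part of statement~2.

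For sufficiency, I would fix $\tau\in(0,1)$ and assume $\hat u$ is not a smooth solution of $\operatorname{Ric}_{g_{\hat u}}\equiv0$. The general-cone version of the analysis in Theorem~\ref{f'} produces, under \eqref{40}, either a smooth metric with $\lambda(g^{-1}A_g)\in\Gamma$ --- whence, as $\Gamma\subseteq\Gamma^\tau$, a smooth strict $\Gamma^\tau$-sub-solution is immediate --- or a $C^{1,1}$ metric $g=e^{-2u}g_0$ with $\lambda(g^{-1}A_g)\in\partial\Gamma$ a.e. In the second case I would use that, for $\lambda\in\partial\Gamma$ with $\sigma_1(\lambda)>0$, the vector $\lambda^\tau=\tau\lambda+(1-\tau)\sigma_1(\lambda)e$ is interior to $\Gamma$ (a boundary point plus an interior point of the convex cone $\Gamma$), so $\lambda\in\Gamma^\tau$; the non-Ricci-flat hypothesis together with a propagation argument for the degenerate equation should preclude $g$ from being Ricci-flat on a set of positive measure, and then a mollification followed by a small conformal perturbation --- where the strict ellipticity of $(f^\tau,\overline{\Gamma^\tau})$ for $\tau<1$ is essential --- yields a smooth strict $\Gamma^\tau$-sub-solution. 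Given such a sub-solution, \eqref{5} is solved by a continuity/degree argument in the spirit of Lions~\cite{L85} and Lions--Trudinger--Urbas~\cite{LTU86}: deform $f^\tau$ to a model operator, derive $C^0$, $C^1$ and $C^2$ a~priori bounds (normalising $u_\tau$ and tracking $\mu_\tau$), and invoke Leray--Schauder degree; positivity of $\mu_\tau$ is automatic since $\lambda(g_0^{-1}A_{g_{u_\tau}})\in\Gamma^\tau$ forces $f^\tau>0$. This proves statement~1. For statement~2, the hypothesis that no $C^{1,1}$ metric lies on $\partial\Gamma$ a.e.\ is, by Theorem~\ref{f'} in its general-cone form, equivalent to the existence of a smooth metric with $\lambda(g^{-1}A_g)\in\Gamma$; this forces $Y(M^n,[g_0])>0$, so $\hat u$ is not Ricci-flat and statement~1 covers $\tau\in(0,1)$, while for $\tau=1$ the smooth $\Gamma$-metric is a strict sub-solution for $(f,\Gamma)$ and the same degree argument applies, the a~priori estimates for $\sigma_k$-type operators extending to this non-strictly-elliptic case.

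For uniqueness, let $(u,\mu)$ and $(\check u,\check\mu)$ solve \eqref{5} for a fixed $\tau\in(0,1]$ and put $w=u-\check u$. At a point $x_0$ where $w$ is maximal, $\nabla w(x_0)=0$ and $\nabla^2 w(x_0)\le0$, so the conformal transformation law \eqref{301} (with $t=t(\tau)\le1$, whose $\nabla^2$-term enters linearly and whose $\Delta$-coefficient is non-negative) gives $A_{g_u}^t(x_0)\le A_{g_{\check u}}^t(x_0)$, hence $\lambda_i(g_0^{-1}A_{g_u}^t)(x_0)\le\lambda_i(g_0^{-1}A_{g_{\check u}}^t)(x_0)$ for all $i$; monotonicity of $f$ and the identity $f^\tau(\lambda(g_0^{-1}A_g))=\tau f(\lambda(g_0^{-1}A_g^t))$ then yield $\mu\le\check\mu$, and working at a minimum of $w$ gives $\check\mu\le\mu$, so $\mu=\check\mu$. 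With $\mu=\check\mu$, equality holds throughout at $x_0$; strict monotonicity of $f$ forces the eigenvalues, and then (as $A_{g_u}^t\le A_{g_{\check u}}^t$) the tensors, to agree at $x_0$, so $\nabla^2 w(x_0)=0$. Finally, subtracting the two copies of \eqref{5} and using concavity of $f$ shows that $w$ satisfies, near $x_0$, a linear elliptic differential inequality $a^{ij}\partial_{ij}w+b^i\partial_i w\ge0$ with $(a^{ij})$ positive definite --- here one uses $\mu,\check\mu>0$, so the solutions are interior to $\Gamma^\tau$ where $(f^\tau,\Gamma^\tau)$ is elliptic --- whence the strong maximum principle makes $\{w=\max w\}$ open; being also non-empty and closed in the connected $M^n$, it is all of $M^n$, so $w$ is constant and $\check u=u+c$.

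The step I expect to be the main obstacle is the sufficiency direction, and within it two points: first, upgrading the rather weak object extracted from \eqref{40} --- possibly only a $C^{1,1}$ metric with $\lambda(g^{-1}A_g)\in\partial\Gamma$ a.e.\ --- to a smooth, strictly admissible sub-solution, which is exactly where the non-Ricci-flat hypothesis and the room afforded by the enlarged cone $\Gamma^\tau$ ($\tau<1$) must be combined, including a delicate propagation statement for the degenerate equation whose strong comparison principle is known to fail (cf.\ Remark~\ref{504''}); and second, the borderline case $\tau=1$ in statement~2, where that room disappears and one must invoke the full strength of Theorem~\ref{f'} together with a~priori estimates in the non-strictly-elliptic setting. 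The eigenvalue existence machinery itself, though technical, follows the established pattern of \cite{L85,LTU86} and should be conceptually routine.
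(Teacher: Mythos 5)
Your route is genuinely different from the paper's, and it has a real gap exactly at the point you flag. The paper proves Theorem~\ref{e} by first establishing existence and uniqueness for the \emph{proper} auxiliary problem \eqref{2'} with $h(x,z)=e^{\beta z}$ (Theorem~\ref{a}, proved by a degree argument whose \emph{a priori} lower $C^0$ bound is extracted directly from the viscosity hypothesis \eqref{40} by running the strong comparison principle of Theorem~\ref{t20} against the limiting degenerate object, followed by Lemma~\ref{506}), and then passes to the limit $\beta\to 0^+$ after normalising to zero average: Jensen's inequality controls $e^{\beta\bar u_\beta}$, and the degenerate alternative $\beta\bar u_\beta\to-\infty$ is ruled out by the same comparison-principle contradiction. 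At no point does the paper manufacture a smooth strictly $\Gamma^\tau$-admissible conformal metric from the viscosity data, and this is precisely the step your plan cannot complete.

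Concretely, the step ``mollification followed by a small conformal perturbation yields a smooth strict $\Gamma^\tau$-sub-solution'' does not go through. The Schouten tensor $A_{g_u}=\nabla^2_{g_0}u-\tfrac12|du|^2_{g_0}g_0+du\otimes du+A_{g_0}$ is not linear in $u$: mollifying a $C^{1,1}$ function $u$ does not preserve the inclusion $\lambda(g_u^{-1}A_{g_u})\in\overline{\Gamma^\tau}$ a.e.\ because the quadratic gradient terms do not commute with convolution, and the inclusion is genuinely two-sided (it is not a scalar convexity constraint from which a Jensen-type argument would save you). Worse, the object you start from is only known to satisfy $\lambda\in\partial\Gamma$ a.e.; while Lemma~\ref{506} does guarantee that away from the Ricci-flat set each such value lies in the \emph{interior} of $\Gamma^\tau$, the possible zero set of $\lambda$ need not have measure zero a priori, and the ``propagation argument for the degenerate equation'' you invoke to rule it out is exactly the kind of statement that is known to fail at $\tau=1$ (Remark~\ref{504''}) and is nontrivial even for $\tau<1$. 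There is also a latent circularity: Theorem~\ref{f'} (and its general-cone analogue), which you use to produce the $C^{1,1}$ dichotomy, is proved in the paper as a corollary of Theorem~\ref{a}/\ref{e}, so its ``analysis'' is the very machinery you are trying to build. Finally, a direct Leray--Schauder degree argument for \eqref{5} has to contend with the invariance $u_\tau\mapsto u_\tau+c$, which makes the linearisation non-invertible; the paper's detour through the proper problem \eqref{2'} is specifically designed to sidestep this. Your necessity arguments and the outline of uniqueness (monotonicity at a max/min point of $u-\check u$ to force $\mu=\check\mu$, then strong maximum principle after noting $\mu>0$ gives strict ellipticity) are sound and close in spirit to the paper's, which instead phrases the last step as a one-line application of the strong comparison principle.
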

	
	\begin{rmk}
		Suppose $(f,\Gamma) = (\sigma_2^{1/2}, \Gamma_2^+)$ and that there exists a constant $\mu_1>0$ and $u_1\in C^\infty(M^n)$ satisfying \eqref{5} with $\tau=1$. Then by uniqueness in Theorem \ref{e}, the constant $\mu_1$ coincides with the constant $\lambda(g_0,\sigma_2)^{1/2}$ in \cite[Theorem 1]{GLW10}. See Section \ref{g2} for the definition of $\lambda(g_0,\sigma_2)$. 
	\end{rmk}

	We now highlight some elements of the proof of Theorem \ref{e}. For simplicity, we address only the first statement in Theorem \ref{e}. We first show in Section \ref{100} that \eqref{40} and $\Gamma\not=\Gamma_1^+$ imply either $Y(M^n,[g_0])>0$ or $\hat{u}$ is smooth with $\operatorname{Ric}_{g_{\hat{u}}}\equiv 0$. In the latter case we are done. In the former case, rather than directly addressing \eqref{5}, we first consider a class of equations for which we prove both existence and uniqueness under \eqref{40}. More precisely, for $\tau\in(0,1)$ and $h=h(x,z)>0$ satisfying conditions (C1)-(C3) in Section \ref{est}, we prove existence and uniqueness of solutions to
	\begin{equation}\label{2}
	f^\tau\big(\lambda(g_0^{-1}A_{g_{u_\tau}})\big)= h(x,u_\tau), \quad \lambda(g_0^{-1}A_{g_{u_\tau}})\in\Gamma^\tau
	\end{equation}
	assuming \eqref{40} and $Y(M^n,[g_0])>0$ -- see Theorem \ref{a} in Section \ref{est}. For each $\tau\in(0,1)$, the solution in the first statement in Theorem \ref{e} is then obtained as a suitably rescaled limit of solutions to \eqref{2} in the case $h(x,z)=e^{\beta z}$ as $\beta\rightarrow 0^+$. Now, the uniqueness for \eqref{2} follows from (C1) (which is a properness condition on $h$) and the strong comparison principle. For existence we use a degree argument, which relies on obtaining \textit{a priori} $C^2$ estimates on solutions $u_\tau$ which are uniform with respect to $\tau$ on any compact subset of $(0,1)$. These are obtained in Section \ref{101}, where the main task is to establish a lower $C^0$ bound (the upper $C^0$ bound follows a standard argument, and first/second derivative estimates follow from previous work of various authors). We assume for a contradiction that the lower $C^0$ bound fails along a sequence $\tau_i\rightarrow \tau<1$, which we show implies the existence of a $C^{1,1}$ metric $\widetilde{g} = e^{-2\widetilde{u}}g_0$ satisfying $\lambda(\widetilde{g}^{-1}A_{\widetilde{g}})\in\partial\Gamma^\tau$ a.e.~on $M^n$. We now recall the assumption that $\hat{u}\in C^0(M^n)$ satisfies \eqref{40}, i.e.~$\lambda(g_{\hat{u}}^{-1}A_{g_{\hat{u}}})\in\overline{\Gamma}$ in the viscosity sense on $M^n$. At this point, if one could show that $\widetilde{u} = \hat{u} + c$ for some constant $c$, a contradiction would follow from a geometric property of the cone $\Gamma$ (see Lemma \ref{506}). For this purpose, we prove a strong comparison principle for $\tau<1$ in Section \ref{scp} (see Theorem \ref{t20}).

	We note that the analogous strong comparison principle for $\tau=1$ is false in general -- see \cite{LN09}. Moreover, for cones $\Gamma$ where such a comparison principle can be established, the second statements in each of Theorems \ref{f'} and \ref{e} can be improved. This is the case, for example, when $(1,0,\dots,0)\in\Gamma$, due to our strong comparison principle (see Theorem \ref{t20}) and the fact that such $\Gamma$ can be written as $(\widetilde{\Gamma})^\tau$ for some $\tau<1$ and $\widetilde{\Gamma}$ satisfying \eqref{A} and \eqref{B} (see Proposition \ref{60} in the appendix). We have:

	\begin{thm}\label{e''}
		In addition to the hypotheses of Theorem \ref{e}, suppose also that \linebreak $(1,0,\dots,0)\in\Gamma$ and $g_{\hat{u}}$ is not a solution to $\lambda(g_{\hat{u}}^{-1}A_{g_{\hat{u}}})\in\partial\Gamma$ on $M^n$ in the viscosity sense. Then for all $\tau\in(0,1]$ there exists a constant $\mu_\tau>0$ and $u_\tau\in C^\infty(M^n)$ satisfying \eqref{5}. In particular, there exists a smooth metric $g\in[g_0]$ such that $\lambda(g^{-1}A_g)\in\Gamma$ on $M^n$. Moreover, if $(u_\tau,\mu_\tau),(\check{u}_\tau,\check{\mu}_\tau)\in C^\infty(M^n)\times (0,\infty)$ both satisfy \eqref{5}, then $\check{\mu}_\tau = \mu_\tau$ and $\check{u}_\tau=u_\tau+c$ for some constant $c\in\mathbb{R}$.
	\end{thm}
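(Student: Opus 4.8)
The plan is to deduce Theorem \ref{e''} from Theorem \ref{e} (whose hypotheses are in force) together with the strong comparison principle of Theorem \ref{t20}, the point being that the extra assumption $(1,0,\dots,0)\in\Gamma$ makes that comparison principle available at the endpoint $\tau=1$.

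First I would note that, since $g_{\hat u}$ is not a viscosity solution of $\lambda(g_{\hat u}^{-1}A_{g_{\hat u}})\in\partial\Gamma$, the function $\hat u$ cannot be a smooth solution of $\operatorname{Ric}_{g_{\hat u}}\equiv0$: otherwise $R_{g_{\hat u}}\equiv0$, hence $A_{g_{\hat u}}\equiv0$, hence $\lambda(g_{\hat u}^{-1}A_{g_{\hat u}})\equiv0\in\partial\Gamma$, and smoothness of $\hat u$ would make $g_{\hat u}$ a classical --- hence viscosity --- solution of that inclusion. Part~1 of Theorem \ref{e} then gives, for each $\tau\in(0,1)$, a constant $\mu_\tau>0$ and $u_\tau\in C^\infty(M^n)$ solving \eqref{5}; moreover, ruling out the Ricci-flat case, the dichotomy of Section~\ref{100} forces $Y(M^n,[g_0])>0$.

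The substance of the theorem is the case $\tau=1$ and uniqueness there. By part~2 of Theorem \ref{e} it is enough to rule out the existence of a $C^{1,1}$ metric $g=e^{-2\bar u}g_0$ with $\lambda(g^{-1}A_g)\in\partial\Gamma$ a.e.\ on $M^n$; suppose one exists. Because $(1,0,\dots,0)\in\Gamma$, Proposition \ref{60} furnishes $\tau_0\in(0,1)$, a cone $\widetilde\Gamma$ satisfying \eqref{A} and \eqref{B}, and a compatible defining function, such that $\Gamma=(\widetilde\Gamma)^{\tau_0}$, and hence $\partial\Gamma=\partial\big((\widetilde\Gamma)^{\tau_0}\big)$ and $\overline\Gamma=\overline{(\widetilde\Gamma)^{\tau_0}}$. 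Thus the inclusion $\lambda(g^{-1}A_g)\in\partial\Gamma$ is exactly the parameter-$\tau_0$ differential inclusion relative to the base cone $\widetilde\Gamma$, a strictly elliptic problem since $\tau_0<1$; $\bar u$ is a $C^{1,1}$ --- hence viscosity --- solution of it, and $\hat u$ is a viscosity subsolution of $\lambda(g^{-1}A_g)\in\overline{(\widetilde\Gamma)^{\tau_0}}=\overline\Gamma$. Replacing $\hat u$ by $\hat u+c$ for a suitable constant so that $\hat u\le\bar u$ on the closed manifold $M^n$ with equality at some point --- legitimate since the inclusions, and \eqref{5}, are invariant under $u\mapsto u+\mathrm{const}$ --- the strong comparison principle of Theorem \ref{t20}, applicable precisely because $\tau_0<1$, forces $\hat u\equiv\bar u+c$. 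But then $g_{\hat u}$ and $g_{\bar u}$ differ only by a constant conformal factor, so $A_{g_{\hat u}}=A_{g_{\bar u}}$ as $(0,2)$-tensors, $\lambda(g_{\hat u}^{-1}A_{g_{\hat u}})\in\partial\Gamma$ a.e., and --- as $\hat u\in C^{1,1}$ --- $g_{\hat u}$ is a viscosity solution of $\lambda(g_{\hat u}^{-1}A_{g_{\hat u}})\in\partial\Gamma$ (the geometric content of Lemma \ref{506} enters here), contradicting our hypothesis. Hence no such $\bar u$ exists, part~2 of Theorem \ref{e} applies, and for $\tau=1$ the metric $g:=g_{u_1}$ satisfies $\lambda(g^{-1}A_g)\in\Gamma^1=\Gamma$. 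Uniqueness for $\tau<1$ is contained in Theorem \ref{e}; for $\tau=1$ one argues in the same parameter-$\tau_0$ formulation --- if $(u_1,\mu_1)$ and $(\check u_1,\check\mu_1)$ both solve \eqref{5} with $\tau=1$ and $\mu_1\le\check\mu_1$, then $u_1$ is a supersolution and $\check u_1$ a solution of the equation with right-hand side $\check\mu_1$, so Theorem \ref{t20} on the closed manifold $M^n$ forces $\check u_1-u_1$ constant, whence $\check\mu_1=\mu_1$ and $\check u_1=u_1+c$.

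The main obstacle is exactly the endpoint $\tau=1$: the strong comparison principle genuinely fails for the Schouten tensor, and it must be recovered by exploiting $(1,0,\dots,0)\in\Gamma$ through Proposition \ref{60}. Making this precise requires showing that the Schouten-tensor differential inclusion --- and, for uniqueness, the equation \eqref{5} at $\tau=1$ --- is a strictly elliptic parameter-$\tau_0<1$ problem for a suitable pair $(\widetilde f,\widetilde\Gamma)$ satisfying \eqref{A}--\eqref{D}; the delicate point is the monotonicity condition \eqref{D} for $\widetilde f$, which should hold once $\tau_0$ is taken close enough to $1$, the requisite uniform ellipticity of $f$ near $\partial\Gamma$ being the manifestation of the hypothesis $(1,0,\dots,0)\in\Gamma$. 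A secondary technical point is that a $C^{1,1}$ metric satisfying the boundary inclusion a.e.\ satisfies it in the viscosity sense, which one checks via Jensen's lemma using that $\overline\Gamma$ (resp.\ $\mathbb{R}^n\setminus\Gamma$) is closed under adding (resp.\ subtracting) nonnegative vectors.
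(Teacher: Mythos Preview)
Your proposal is correct and follows essentially the same approach as the paper: use Proposition \ref{60} to write $\Gamma=(\widetilde\Gamma)^{\tau_0}$ with $\tau_0<1$, so that the strong comparison principle (Theorem \ref{t20}, equivalently Theorem \ref{t20}$'$) becomes available for the endpoint inclusion $\lambda(g^{-1}A_g)\in\partial\Gamma$; then argue that any $C^{1,1}$ solution $\bar u$ of this inclusion must agree with $\hat u$ up to a constant, contradicting the hypothesis on $\hat u$. The paper packages this slightly differently---it re-enters the proof of the a priori estimates (Proposition \ref{E2}) and establishes a lower bound on solutions to \eqref{2'} uniformly up to $\tau=1$, then proves Theorem \ref{a3} and deduces Theorem \ref{e''} by the same limiting argument as Theorem \ref{e}---whereas you invoke part~2 of Theorem \ref{e} as a black box; but the contradiction step is identical.

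Two minor points. First, your concerns in the final paragraph about constructing a defining function $\widetilde f$ for $\widetilde\Gamma$ satisfying \eqref{D} are unnecessary: the existence argument only requires the comparison principle for the differential \emph{inclusion}, which is exactly Theorem \ref{t20}$'$ and needs no $\widetilde f$; and uniqueness at $\tau=1$ is already contained in the uniqueness clause of Theorem \ref{e} (its proof uses the standard strong comparison principle for smooth solutions with eigenvalues in the open cone $\Gamma^\tau$, valid for all $\tau\in(0,1]$). Second, your parenthetical invocation of Lemma \ref{506} is misplaced---that lemma concerns $\partial\Gamma^t\cap\overline\Gamma$ for $t<1$ and plays no role here; what you actually use is Remark \ref{84} (a $C^{1,1}$ function satisfying the inclusion a.e.\ is a viscosity solution).
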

	
	As a consequence we have the following uniqueness and regularity result:
	
	\begin{cor}\label{505'}
		Let $(M^n,g_0)$ be a smooth, closed Riemannian manifold of dimension $n\geq 3$ and suppose $\Gamma$ satisfies $(1,0,\dots,0)\in\Gamma$, \eqref{A} and \eqref{B} with $\Gamma\not=\Gamma_1^+$. Then continuous viscosity solutions $g_{u}=e^{-2u}g_0$ to the equation $\lambda(g_u^{-1}A_{g_u})\in\partial\Gamma$ on $M^n$ are unique up to addition of constants to $u$, and belong to $C^{1,1}(M^n)$. 
	\end{cor}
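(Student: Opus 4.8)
The plan is to derive the result from the strong comparison principle of Theorem \ref{t20}, from Theorem \ref{e}, and from Proposition \ref{60}. If there is no continuous viscosity solution of $\lambda(g_u^{-1}A_{g_u})\in\partial\Gamma$ the statement is vacuous, so suppose $u$ is one. Since $(1,0,\dots,0)\in\Gamma$, Proposition \ref{60} allows us to write $\Gamma=(\widetilde{\Gamma})^\tau$ for some $\tau\in(0,1)$ and some cone $\widetilde{\Gamma}$ satisfying \eqref{A}--\eqref{B}; fixing a defining function $\widetilde{f}$ for $\widetilde{\Gamma}$ as in \eqref{C}--\eqref{D}, the differential inclusion $\lambda(g^{-1}A_g)\in\partial\Gamma$ becomes the degenerate equation $\widetilde{f}^\tau(\lambda(g_0^{-1}A_{g_u}))=0$, $\lambda(g_0^{-1}A_{g_u})\in\overline{\Gamma}$, which is locally strictly elliptic because $\tau<1$; moreover, a continuous viscosity solution of the inclusion is precisely a continuous viscosity sub- and supersolution of this equation.

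For uniqueness up to constants, let $u,v$ be two continuous viscosity solutions. We use that $A_g$ is invariant under constant rescalings of $g$, so that the equation is invariant under addition of constants to the potential. Replacing $v$ by $v+\max_{M^n}(u-v)$ (the maximum being attained by compactness), we may assume $u\le v$ on $M^n$ with $u(x_0)=v(x_0)$ at some $x_0$. Since $\tau<1$, Theorem \ref{t20} applies to the subsolution $u$ and the supersolution $v$ and forces $u\equiv v$ in a neighbourhood of $x_0$; connectedness of $M^n$ then gives $u\equiv v$ on all of $M^n$.

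For $C^{1,1}$-regularity we apply Theorem \ref{e} to the cone $\Gamma$ with any admissible defining function (one exists by \cite[Appendix A]{LN20}), taking $g_{\hat u}:=g_u$; this is legitimate because $\Gamma\ne\Gamma_1^+$ and, being a viscosity subsolution of $\lambda\in\partial\Gamma$, $g_u$ satisfies \eqref{40}. By statement 2 of Theorem \ref{e}, exactly one of the following holds: either there is a $C^{1,1}$ metric $g_w=e^{-2w}g_0$ with $\lambda(g_w^{-1}A_{g_w})\in\partial\Gamma$ a.e.\ on $M^n$; or \eqref{5} has a smooth solution for every $\tau\in(0,1]$, which for $\tau=1$ yields a smooth metric $g=e^{-2u_1}g_0$ with $\lambda(g_0^{-1}A_{g})\in\Gamma$ on $M^n$. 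The second alternative is impossible: again using the invariance of $A_g$ under constant rescalings, $u_1-\max_{M^n}(u_1-u)$ is a smooth function touching $u$ from below at some $x_0$ at which $\lambda(g_0^{-1}A_{g})\in\Gamma$, so the supersolution test for $u$ at $x_0$ forces this eigenvalue vector to lie outside $\Gamma$ -- a contradiction. Hence the first alternative holds. Since a $C^{1,1}$ metric solving $\lambda(g_w^{-1}A_{g_w})\in\partial\Gamma$ a.e.\ is itself a continuous viscosity solution -- a standard consequence of Alexandrov's theorem together with the monotonicity of $A^t_{g}$ in the Hessian of the conformal factor, visible from \eqref{301} as $t\le1$ -- the uniqueness already proved gives $u=w+c$ for some $c\in\mathbb{R}$, so $u\in C^{1,1}(M^n)$.

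The crux is the uniqueness step, which rests entirely on the availability of the strong comparison principle in the $\tau<1$ regime (Theorem \ref{t20}): the hypothesis $(1,0,\dots,0)\in\Gamma$ is used precisely to place the problem in that regime via Proposition \ref{60}, and no analogous uniqueness can hold for $\Gamma=\Gamma_k^+$ with $k\ge2$ (compare Remark \ref{504''}). The one further technical point is to confirm that a $C^{1,1}$ almost-everywhere solution of $\lambda(g^{-1}A_g)\in\partial\Gamma$ is a bona fide viscosity solution, which is handled by a routine Jensen--Alexandrov touching argument.
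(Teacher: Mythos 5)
Your uniqueness step has a genuine gap. You take two continuous viscosity solutions $u,v$, translate so $u\le v$ with contact at some $x_0$, and then invoke Theorem~\ref{t20} directly. But Theorem~\ref{t20} (and its reformulation Theorem~\ref{t20}$'$) requires the supersolution $u_2$ to belong to $C^{1,1}_{\operatorname{loc}}$; it is proved by working with the a.e.-defined Hessian of $u_2$ and by touching the sup-convolution of $u_1$ against $u_2$ at points where both are punctually second order differentiable. A merely continuous $v$ does not satisfy this hypothesis, so the conclusion $u\equiv v$ near $x_0$ is not available at that stage. This is not a cosmetic issue: as written, your regularity argument later invokes ``the uniqueness already proved'' to deduce $u = w + c$, so the whole argument is circular -- you use uniqueness to get regularity, but you would need regularity to make the uniqueness comparison legal.

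The paper avoids this by reversing the order. It first notes (essentially by the viscosity supersolution test, as you do correctly to rule out the second alternative of Theorem~\ref{e}) that the existence of a continuous viscosity solution $u$ precludes any \emph{smooth} conformal metric with $\lambda(g^{-1}A_g)\in\Gamma$, and then applies Theorem~\ref{a} (statement 2) to produce a \emph{$C^{1,1}$} metric $g_w = e^{-2w}g_0$ with $\lambda(g_w^{-1}A_{g_w})\in\partial\Gamma$ a.e.\ on $M^n$. Now every comparison needed is between a continuous $u$ (the subsolution) and a translate of $w$ (the supersolution), which is $C^{1,1}$, so Theorem~\ref{t20}$'$ applies legitimately and yields $u = w + c$. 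That single comparison gives both $u\in C^{1,1}(M^n)$ and, comparing a second solution $v$ to the same $w$, uniqueness up to constants. In short: your regularity step is essentially sound (modulo the appeal to Theorem~\ref{e} vs.\ Theorem~\ref{a}, which is immaterial), but the uniqueness must be routed through the $C^{1,1}$ anchor $w$ rather than attempted directly between two continuous solutions. Rearranging your argument to do the regularity/existence-of-$w$ step first and then compare each continuous solution to $w$ would close the gap.
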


	\begin{rmk}\label{505''}
		By Corollary \ref{505'}, it is equivalent to assume in Theorem \ref{e''} that $g_{\hat{u}}$ is not a $C^{1,1}$ solution to $\lambda(g_{\hat{u}}^{-1}A_{g_{\hat{u}}})\in\partial\Gamma$ a.e.~on $M^n$. 
	\end{rmk}

	\subsection{Applications}
	
	We now discuss some geometric applications of our main results. We start with two results that are consequences of Theorem \ref{f'} (more precisely, Corollary \ref{504'}) and previous work of Ge, Lin \& Wang \cite{GLW10}, and concern the case  $(f,\Gamma) = (\sigma_2^{1/2},\Gamma_2^+)$. We refer the reader to Section \ref{g2} for the definition of the nonlinear eigenvalue $\lambda(\sigma_2,g_0)$ and the nonlinear Yamabe-type invariant $Y_{2,1}([g_0])$, which were previously studied in \cite{GLW10} (see also \cite{GW04, GW06, GW07}).

	\begin{thm}\label{61'}
		Let $(M^3,g_0)$ be a smooth, closed Riemannian 3-manifold. Then \linebreak $\lambda(\sigma_2,g_0)$ is positive (resp.~negative, resp.~zero) if and only if $Y_{2,1}([g_0])$ is positive (resp. negative, resp.~zero). In particular, the sign of $\lambda(\sigma_2,g_0)$ is a conformal invariant.
	\end{thm}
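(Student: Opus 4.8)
The plan is to establish the sign equality by treating the three cases separately, using throughout the reformulation of both invariants in terms of the same geometric dichotomy for $[g_0]$: whether $[g_0]$ contains a smooth metric $g$ with $\lambda(g^{-1}A_g)\in\Gamma_2^+$ on $M^3$, or instead (only) a $C^{1,1}$ metric with $\lambda(g^{-1}A_g)\in\partial\Gamma_2^+$ a.e., or neither --- not even a continuous metric with $\lambda(g^{-1}A_g)\in\overline{\Gamma_2^+}$ in the viscosity sense. Because each of $\lambda(\sigma_2,g_0)$ and $Y_{2,1}([g_0])$ is exactly one of positive, negative, zero, it is enough to match their signs case by case; the final assertion that $\operatorname{sign}\lambda(\sigma_2,g_0)$ is a conformal invariant is then immediate, since $Y_{2,1}([g_0])$ depends only on the conformal class.

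For the positive case I would argue as follows. On the eigenvalue side, a smooth metric in $\Gamma_2^+$ witnesses the hypothesis \eqref{40}, so the second statement of Theorem~\ref{e} --- together with the second statement of Theorem~\ref{f'}, which identifies the nonexistence of a $C^{1,1}$ metric with $\lambda(g^{-1}A_g)\in\partial\Gamma_2^+$ a.e.\ with the existence of a smooth $\Gamma_2^+$ metric --- shows that $[g_0]$ admits a smooth $\Gamma_2^+$ metric if and only if the eigenvalue problem \eqref{5} for $(f,\Gamma)=(\sigma_2^{1/2},\Gamma_2^+)$ is solvable for every $\tau\in(0,1]$, in particular at $\tau=1$; by the definition of $\lambda(\sigma_2,g_0)$ in Section~\ref{g2} and the identification of $\mu_1$ with $\lambda(\sigma_2,g_0)^{1/2}$ this is in turn equivalent to $\lambda(\sigma_2,g_0)>0$. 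On the $Y_{2,1}$ side, $Y_{2,1}([g_0])>0$ is equivalent to the existence of a smooth $\Gamma_2^+$ metric by \cite{GLW10}: the nontrivial implication uses that $Y_{2,1}([g_0])>0$ forces $Y(M^3,[g_0])>0$, so after passing to a Yamabe metric one has $R_{g_0}>0$ and $\int_{M^3}\sigma_2(\lambda(g_0^{-1}A_{g_0}))\,dv_{g_0}>0$, at which point the solvability theorem of Ge--Lin--Wang applies. Combining the two equivalences gives $\lambda(\sigma_2,g_0)>0\iff Y_{2,1}([g_0])>0$.

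For the vanishing and negative cases I would use the following. If $Y_{2,1}([g_0])=0$, then by \cite{GLW10} the infimum is attained by a $C^{1,1}$ metric $g_{\hat u}=e^{-2\hat u}g_0$ with $\lambda(g_{\hat u}^{-1}A_{g_{\hat u}})\in\partial\Gamma_2^+$ a.e.\ on $M^3$, so in particular \eqref{40} holds in the viscosity sense; moreover $[g_0]$ has no smooth $\Gamma_2^+$ metric (else $Y_{2,1}([g_0])>0$ by the positive case), whence $\lambda(\sigma_2,g_0)\le 0$. To rule out $\lambda(\sigma_2,g_0)<0$ I would invoke Corollary~\ref{504'}: apart from the degenerate case in which $[g_0]$ is the conformal class of a flat metric --- where both invariants vanish, which is checked directly --- we have $Y(M^3,[g_0])>0$, so for every $t<1$ there is a smooth metric $g_t$ with $\lambda(g_t^{-1}A_{g_t}^t)\in\Gamma_2^+$, equivalently $\lambda(g_t^{-1}A_{g_t})\in(\Gamma_2^+)^{\tau(t)}$; this reproduces the Ge--Lin--Wang optimiser picture $\lambda(g_0^{-1}A_{g_0}^t)\in\Gamma_2^+$ for $t<1$, and together with the behaviour of the eigenvalues $\mu_\tau$ as $\tau\uparrow 1$ coming from Theorem~\ref{e} and the normalisation fixed in Section~\ref{g2} it forces $\lambda(\sigma_2,g_0)=0$. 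Finally, if $Y_{2,1}([g_0])<0$, then by \cite{GLW10} there is no continuous metric $g$ conformal to $g_0$ with $\lambda(g^{-1}A_g)\in\overline{\Gamma_2^+}$ in the viscosity sense, so in particular \eqref{5} at $\tau=1$ is unsolvable and $\lambda(\sigma_2,g_0)\le 0$; since the positive and vanishing cases have been settled, the trichotomy gives $\lambda(\sigma_2,g_0)<0$.

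The main obstacle I anticipate is the vanishing case: $(\sigma_2^{1/2},\Gamma_2^+)$ is only degenerately elliptic there, the relevant metrics are merely $C^{1,1}$, and the strong comparison principle fails at $\tau=1$ (see \cite{LN09}), so one cannot work directly at $\tau=1$ and must pass through the $\tau<1$ theory developed here --- Corollary~\ref{504'} to produce the metrics realising $\lambda(g_t^{-1}A_{g_t}^t)\in\Gamma_2^+$ for $t<1$, and the uniqueness and regularity content of Theorems~\ref{f'} and~\ref{e} --- in order to locate $\lambda(\sigma_2,g_0)$ exactly at $0$ precisely when $Y_{2,1}([g_0])$ vanishes. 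A lesser, bookkeeping-type point is to verify that the normalisations built into the Section~\ref{g2} definitions of $\lambda(\sigma_2,g_0)$ and $Y_{2,1}([g_0])$ make the strict-versus-nonstrict inequalities line up in the negative case, and to confirm the implication $Y_{2,1}([g_0])>0\Rightarrow Y(M^3,[g_0])>0$ used above.
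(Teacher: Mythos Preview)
There is a genuine gap: you are proving the wrong direction in the vanishing case, and as a result your trichotomy does not close. By \cite[Lemma~3]{GLW10}, for $n=3$ one already has $\lambda(\sigma_2,g_0)>0\iff Y_{2,1}([g_0])>0$, $\lambda(\sigma_2,g_0)\le 0\iff Y_{2,1}([g_0])\le 0$, and $\lambda(\sigma_2,g_0)<0\Rightarrow Y_{2,1}([g_0])<0$. In particular, your implication $Y_{2,1}([g_0])=0\Rightarrow\lambda(\sigma_2,g_0)=0$ follows immediately from these (contrapose the third to get $Y_{2,1}\ge 0\Rightarrow\lambda\ge 0$, and combine with the second). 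The only implication that is \emph{not} in \cite{GLW10} --- and this is the actual content of the theorem --- is $\lambda(\sigma_2,g_0)=0\Rightarrow Y_{2,1}([g_0])=0$, equivalently $Y_{2,1}([g_0])<0\Rightarrow\lambda(\sigma_2,g_0)<0$. Your negative case needs exactly this: from $Y_{2,1}<0$ you obtain only $\lambda\le 0$, and nothing you have established excludes $\lambda=0$ when $Y_{2,1}<0$. (Two subsidiary points: the claim that $Y_{2,1}<0$ rules out any continuous viscosity subsolution is not in \cite{GLW10}; and your appeal to ``the behaviour of $\mu_\tau$ as $\tau\uparrow 1$'' is too vague --- there is no link given between the eigenvalues $\mu_\tau$ of Theorem~\ref{e} for $\tau<1$ and the variationally defined $\lambda(\sigma_2,g_0)$.)

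The paper proves the missing direction by a direct estimate. Assume $\lambda(\sigma_2,g_0)=0$, so $Y(M^3,[g_0])>0$, and take the smooth solutions $g_{u_\tau}$, $\tau<1$, to \eqref{ee1'} produced in the proof of Theorem~\ref{a}. Writing $F[g]=\int_{M^3}\sigma_2(\lambda(g^{-1}A_g))\,dv_g\cdot\int_{M^3}\sigma_1(\lambda(g^{-1}A_g))\,dv_g$, the algebraic identity relating $\sigma_2(\lambda)$ and $\sigma_2(\lambda^\tau)$ yields $\sigma_2(\lambda)\ge -(1-\tau)\,c(n,\tau)\,\sigma_1(\lambda)^2$ whenever $\lambda\in\Gamma^\tau$, and hence, after rewriting the volume forms and using the $C^2$ bounds on $u_\tau$ that are uniform for $\tau\in[\delta,1]$ (Step~1 of Proposition~\ref{E}), one gets $F[g_{u_\tau}]\ge -C(1-\tau)\int_{M^3} e^{u_\tau}\,dv_{g_0}\int_{M^3} e^{-u_\tau}\,dv_{g_0}$. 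The uniform gradient estimate shows the product of integrals is bounded independently of $\tau$, so $F[g_{u_\tau}]\ge -C(1-\tau)\to 0$, whence $Y_{2,1}([g_0])=\sup F\ge 0$; combined with $Y_{2,1}\le 0$ this gives $Y_{2,1}=0$. This quantitative lower bound on $F$ along the path $\tau\to 1$ is the piece your argument is missing.
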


\noindent 	The counterpart to Theorem \ref{61'} in dimensions $n\geq 4$ was previously obtained \cite{GLW10}.

	The relevance of Theorem \ref{61'} (and its higher dimensional counterpart in \cite{GLW10}) in relation to the $\sigma_2$-Yamabe problem is as follows. It is shown in \cite[Theorem 2]{GLW10} that $Y_{2,1}([g_0])>0$ if and only if there exists $g\in[g_0]$ with $\lambda(g^{-1}A_g)\in\Gamma_2^+$ on $M^n$. Given the existence of a solution to the $\sigma_2$-Yamabe problem under the assumption of a conformal metric satisfying $\lambda(g^{-1}A_g)\in\Gamma_2^+$, the existence of a solution $\sigma_2$-Yamabe problem in $[g_0]$ is therefore equivalent to positivity of $\lambda(\sigma_2,g_0)$. This is in direct analogy with the Yamabe problem, where the existence of a conformal metric with positive constant scalar curvature is equivalent to positivity of the first eigenvalue of the conformal Laplacian of any conformal metric. 
	
	Our next result both unifies and extends the work of Ge, Lin \& Wang \cite[Theorem 2]{GLW10} and Catino \& Djadli \cite[Theorem 1.3]{CD10}, as we will explain in more detail in Section \ref{g2}:
	
	\begin{thm}\label{d'}
		Let $(M^n,g_0)$ be a smooth, closed Riemannian manifold of dimension $n\geq 3$ with $Y_{2,1}([g_0]) = 0$. Then there exists a smooth metric $g_t\in[g_0]$ satisfying $\lambda(g_t^{-1}A_{g_t}^t)\in\Gamma_2^+$ if and only if $t<1$. 
	\end{thm}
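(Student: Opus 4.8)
The plan is to obtain Theorem \ref{d'} by combining the variational theory of $Y_{2,1}$ from \cite{GLW10} and Section \ref{52} (which will furnish a continuous metric satisfying \eqref{40'}) with Corollary \ref{504'} and \cite[Theorem 2]{GLW10}. Throughout, recall that by the definition of $Y_{2,1}$ (Section \ref{g2}) the hypothesis $Y_{2,1}([g_0])=0$ already entails that $[g_0]$ contains a metric of positive scalar curvature, i.e.\ $Y(M^n,[g_0])>0$. For the ``only if'' direction, suppose $g_t\in[g_0]$ satisfies $\lambda(g_t^{-1}A_{g_t}^t)\in\Gamma_2^+$. If $t=1$, then $A_{g_t}^1=A_{g_t}$, so $Y_{2,1}([g_0])>0$ by \cite[Theorem 2]{GLW10}, contradicting $Y_{2,1}([g_0])=0$. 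For $t>1$ (which lies outside the main scope $t\le1$ of the paper), an elementary argument from \eqref{301} --- using that $\sigma_1(\lambda(g_t^{-1}A_{g_t}^t))$ is a constant multiple of $R_{g_t}$, the monotonicity of $\Gamma_2^+$, and $Y(M^n,[g_0])>0$ --- again produces a contradiction. Hence $t<1$.

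For the ``if'' direction, fix $t<1$; the crux is to exhibit a continuous metric $g_{\hat u}=e^{-2\hat u}g_0$ satisfying $\lambda(g_{\hat u}^{-1}A_{g_{\hat u}})\in\overline{\Gamma_2^+}$ in the viscosity sense, i.e.\ \eqref{40'} with $k=2$. When $n=3$ this is essentially contained in \cite{GLW10}: since $[g_0]$ cannot be conformally deformed into $\Gamma_2^+$ (otherwise $Y_{2,1}([g_0])>0$ by \cite[Theorem 2]{GLW10}), the vanishing infimum $Y_{2,1}([g_0])=0$ is attained by a metric $g_{\hat u}$ --- smooth, or at worst $C^{1,1}$ --- with $\lambda(g_{\hat u}^{-1}A_{g_{\hat u}})\in\partial\Gamma_2^+\subset\overline{\Gamma_2^+}$. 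When $n\ge4$ one instead invokes the analysis of Section \ref{52} (see Theorem \ref{4'} and its proof), where the integral/variational content of $Y_{2,1}([g_0])=0$ is reinterpreted exactly as the existence of such a viscosity subsolution $g_{\hat u}$. I expect this reinterpretation to be the main obstacle: it is precisely the ingredient that upgrades the three-dimensional result of \cite{GLW10}, the four-dimensional results of \cite{CGY02a,GV03}, and the higher-dimensional results of \cite{She08}, into a single statement valid in all dimensions and covering the borderline case $Y_{2,1}([g_0])=0$.

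Once $g_{\hat u}$ is available, the hypotheses of Theorem \ref{f'} hold with $k=2$, and since $Y(M^n,[g_0])>0$ we may apply Corollary \ref{504'} to obtain, for the given $t<1$, a smooth metric $g_t\in[g_0]$ with $\lambda(g_t^{-1}A_{g_t}^t)\in\Gamma_2^+$. (Alternatively, statement 1 of Theorem \ref{f'} applies directly: $\hat u$ cannot be a smooth solution of $\operatorname{Ric}_{g_{\hat u}}\equiv0$, since a Ricci-flat metric conformal to $g_0$ would force $Y(M^n,[g_0])\le0$.) This completes the proof.
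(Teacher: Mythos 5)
Your overall strategy is the paper's: use the vanishing of $Y_{2,1}([g_0])$ to manufacture a $C^{1,1}$ conformal metric lying on $\partial\Gamma_2^+$ a.e.~(hence a viscosity subsolution of \eqref{40'} with $k=2$), then invoke Corollary \ref{504'} to deform into $\Gamma_2^+$ for each $t<1$, and use a comparison argument to rule out $t=1$. However, there is a genuine gap in your ``if'' direction when $n\geq 4$.

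The key ingredient --- the $C^{1,1}$ metric $g$ with $\lambda(g^{-1}A_g)\in\partial\Gamma_2^+$ a.e.~--- is supplied by Case 2 of \cite[Theorem 2]{GLW10}, which holds in \emph{all} dimensions $n\geq 3$; it is already quoted in Section \ref{g2} immediately after the definition of $Y_{2,1}$. Your dichotomy between $n=3$ and $n\geq 4$ is therefore unnecessary, and the route you propose for $n\geq 4$ does not work: Theorem \ref{4'} and Section \ref{52} concern \emph{positivity} of the different Yamabe-type quantity $Y_2^t([g_0])$ and produce a smooth metric in the \emph{open} cone $\Gamma_2^+$, whereas here the hypothesis is the borderline $Y_{2,1}([g_0])=0$ and the needed output is a metric on $\partial\Gamma_2^+$. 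Nothing in Section \ref{52} ``reinterprets'' $Y_{2,1}([g_0])=0$ as a viscosity condition --- you have to extract that from the variational analysis in \cite{GLW10}, exactly as the paper does. Once you cite \cite[Theorem 2, Case 2]{GLW10} in place of Theorem \ref{4'}, the argument closes in all dimensions, and your application of Corollary \ref{504'} (together with $Y(M^n,[g_0])>0$) is correct. For the ``only if'' direction at $t=1$, your use of \cite[Theorem 2]{GLW10} and the paper's use of the strong comparison principle applied to \eqref{204} are essentially equivalent; the aside about $t>1$ lies outside the paper's scope ($t\leq 1$) and is not required.
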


		Theorem \ref{d'} also has geometric consequences regarding the existence of conformal metrics with pinched Ricci curvature -- see Corollary \ref{23}.

	Our next two results are applications of Theorem \ref{e''}. The first of these applications generalises a result of Aubin \& Ehrlick \cite{Aub70, Ehr76}, who showed that if a manifold admits a smooth metric $g_0$ with $\operatorname{Ric}_{g_0}\geq 0$ on $M^n$ and $\operatorname{Ric}_{g_0}>0$ at some point on $M^n$, then it admits a conformal metric $g\in[g_0]$ with $\operatorname{Ric}_g>0$ on $M^n$. We extend this result to a wider range of lower bounds, which are only required to be satisfied in the viscosity sense on $M^n$: 
	
	\begin{thm}\label{700}
		Let $(M^n,g_0)$ be a smooth, closed Riemannian manifold of dimension $n\geq 3$. Suppose $\hat{g}$ is a continuous metric conformal to $g_0$ such that 
		\begin{equation}\label{312}
		\operatorname{Ric}_{\hat{g}} \geq \alpha R_{\hat{g}}\hat{g} \quad \text{ in the viscosity sense on }M^n
		\end{equation}
		for some constant $\alpha< \frac{1}{2(n-1)}$, and that $\hat{g}$ is not a $C^{1,1}$ solution to $\lambda(\hat{g}^{-1}(\operatorname{Ric}_{\hat{g}} - \alpha R_{\hat{g}}\hat{g})\in\partial\Gamma_n^+$ a.e.~on $M^n$. Then there exists a smooth metric $g\in[g_0]$ such that $\operatorname{Ric}_g > \alpha R_g g$ on $M^n$.
	\end{thm}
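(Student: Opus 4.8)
The plan is to recognise the pinching inequality $\operatorname{Ric}_g>\alpha R_g g$ as a differential inclusion for a trace-modified Schouten tensor and then apply Theorem~\ref{e''}. Set $t=2(n-1)\alpha$; the hypothesis $\alpha<\tfrac1{2(n-1)}$ means precisely $t<1$. By \eqref{300} we have $\operatorname{Ric}_g-\alpha R_g g=(n-2)\,A_g^t$, so for a smooth metric $g\in[g_0]$ the condition $\operatorname{Ric}_g>\alpha R_g g$ on $M^n$ is equivalent to $\lambda(g^{-1}A_g^t)\in\Gamma_n^+$ on $M^n$; likewise \eqref{312} is equivalent to $\lambda(\hat g^{-1}A_{\hat g}^t)\in\overline{\Gamma_n^+}$ in the viscosity sense on $M^n$, and ``$\hat g$ is a $C^{1,1}$ solution of $\lambda(\hat g^{-1}(\operatorname{Ric}_{\hat g}-\alpha R_{\hat g}\hat g))\in\partial\Gamma_n^+$ a.e.'' is equivalent to ``$\lambda(\hat g^{-1}A_{\hat g}^t)\in\partial\Gamma_n^+$ a.e.''. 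I would then pass to the $\tau$-formulation: put $\tau=\tau(t)=(1+\tfrac{1-t}{n-2})^{-1}\in(0,1)$ and $\Gamma\defeq(\Gamma_n^+)^\tau$. Using the identity $A_g^t=\tau^{-1}[\tau A_g+(1-\tau)\sigma_1(\lambda(g^{-1}A_g))\,g]$ recorded after \eqref{44}, applied pointwise to the symmetric $2$-tensors produced by $C^2$ test functions in the definition of viscosity sub/supersolutions, the three inclusions above become, respectively, $\lambda(g^{-1}A_g)\in\Gamma$ on $M^n$; $\lambda(\hat g^{-1}A_{\hat g})\in\overline\Gamma$ in the viscosity sense on $M^n$; and the a.e.\ inclusion $\lambda(\hat g^{-1}A_{\hat g})\in\partial\Gamma$.

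Next I would record that $\Gamma=(\Gamma_n^+)^\tau$ satisfies all standing assumptions on the cone. Properties \eqref{A} and \eqref{B} are inherited by $\Gamma^\tau$ for $\tau<1$ (noted after \eqref{44}); moreover $\lambda\mapsto\lambda^\tau$ is the linear map $T_\tau=\tau I+(1-\tau)J$, where $J$ is the all-ones matrix, with positive eigenvalues $\tau+n(1-\tau)$ and $\tau$, hence $T_\tau$ is invertible and injective and fixes $\Gamma_1^+$; since $\Gamma_n^+\subsetneq\Gamma_1^+$ this forces $\Gamma=(\Gamma_n^+)^\tau\subsetneq\Gamma_1^+$, so $\Gamma\ne\Gamma_1^+$. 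Also $(1,0,\dots,0)^\tau=(1,1-\tau,\dots,1-\tau)\in\Gamma_n^+$, i.e.\ $(1,0,\dots,0)\in\Gamma$. Finally, a defining function $f$ for $\Gamma$ satisfying \eqref{C} and \eqref{D} exists by \cite[Appendix A]{LN20}.

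With these identifications I would apply Theorem~\ref{e''} to the pair $(f,\Gamma)$ with $g_{\hat u}\defeq\hat g=e^{-2\hat u}g_0$, $\hat u\in C^0(M^n)$: hypothesis \eqref{40} holds by the second inclusion above, and — since $(1,0,\dots,0)\in\Gamma$ together with \eqref{A}, \eqref{B} and $\Gamma\ne\Gamma_1^+$ hold — Corollary~\ref{505'} and Remark~\ref{505''} show that ``$g_{\hat u}$ is not a continuous viscosity solution of $\lambda(g_{\hat u}^{-1}A_{g_{\hat u}})\in\partial\Gamma$'' is equivalent to ``$g_{\hat u}$ is not a $C^{1,1}$ solution of $\lambda(g_{\hat u}^{-1}A_{g_{\hat u}})\in\partial\Gamma$ a.e.'', which by the first paragraph is exactly the hypothesis that $\hat g$ is not a $C^{1,1}$ solution of $\lambda(\hat g^{-1}(\operatorname{Ric}_{\hat g}-\alpha R_{\hat g}\hat g))\in\partial\Gamma_n^+$ a.e.\ on $M^n$. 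Theorem~\ref{e''} then yields a smooth metric $g\in[g_0]$ with $\lambda(g^{-1}A_g)\in\Gamma=(\Gamma_n^+)^\tau$, equivalently $\lambda(g^{-1}A_g^t)\in\Gamma_n^+$, equivalently $\operatorname{Ric}_g>\alpha R_g g$ on $M^n$, as desired.

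The only step that is more than bookkeeping is the claim in the first paragraph that the viscosity inclusions — in particular the degenerate boundary case $\partial\Gamma_n^+\leftrightarrow\partial\Gamma$ — transform correctly under $A_g^t\leftrightarrow A_g$; this should reduce to the observation that $A_g^t$ and $A_g$ are related by the pointwise invertible linear operation $T_\tau$, exactly as in the equivalence of \eqref{79} and \eqref{44}, so I expect only a short argument is needed. Everything else is a direct application of Theorem~\ref{e''}.
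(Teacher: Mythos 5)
Your proof is correct and follows the same route as the paper's: translate the Ricci pinching into the trace--modified Schouten inclusion $\lambda(g^{-1}A_g^t)\in\Gamma_n^+$, pass to the cone $\Gamma=(\Gamma_n^+)^\tau$ with $\tau=\tau(t)\in(0,1)$, verify the standing hypotheses (in particular $(1,0,\dots,0)\in\Gamma$ and $\Gamma\neq\Gamma_1^+$), and invoke Theorem~\ref{e''} together with Remark~\ref{505''}. You have simply supplied the bookkeeping the paper leaves implicit. One remark worth flagging: your value $t=2(n-1)\alpha$ is the correct one, since $(n-2)A_g^t=\operatorname{Ric}_g-\tfrac{t}{2(n-1)}R_g g$ equals $\operatorname{Ric}_g-\alpha R_g g$ exactly when $t=2(n-1)\alpha$; the paper's written proof states $t=\tfrac{\alpha}{2(n-1)}$, which is a typo (it would not give the claimed identity, though it happens not to affect the conclusion that $t<1$).
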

	
	Note that when $\alpha \geq 0$, all the relevant metrics in Theorem \ref{700} have nonnegative Ricci curvature, and Theorem \ref{700} can also be obtained by appealing to the work of \cite{LN14} (rather than Theorem \ref{e''}) and our strong comparison principle in Section \ref{scp} -- see Section \ref{802} for the details.

	Our second application of Theorem \ref{e''} establishes a partial converse to \cite[Theorem 1.2]{LN20}, where the existence of a Green's function for $\Gamma$ satisfying \eqref{A} and \eqref{B} is obtained under the assumption that there exists a smooth metric $g\in[g_0]$ satisfying \eqref{79} with $t=1$ (together with a natural, necessary condition on $\Gamma$ that is not relevant in the discussion below). For $m\geq 1$ distinct points $p_1,\dots,p_m\in M^n$ and positive numbers $c_1,\dots,c_m$, we recall from \cite{LN20} that a positive function $w\in C^0_{\operatorname{loc}}(M^n\backslash\{p_1,\dots,p_m\})$ is a Green's function for $\Gamma$ with poles $p_1,\dots,p_m$ and strengths $c_1,\dots,c_m$ if the metric $g = w^{\frac{4}{n-2}}g_0$ satisfies $\lambda(g^{-1}A_{g})\in\partial\Gamma$ in the viscosity sense on $M^n\backslash\{p_1,\dots,p_m\}$ and $\lim_{x\rightarrow p_i} d_g(x,x_i)^{n-2}w(x) = c_i$ for each $i=1,\dots,m$. We prove:
	
	\begin{thm}\label{305}
		Let $(M^n,g_0)$ be a smooth, closed Riemannian manifold of dimension $n\geq 3$ and suppose $\Gamma$ satisfies \eqref{A}, \eqref{B} and $(1,0,\dots,0)\in\Gamma$. Suppose for some finite set of points $\{p_1,\dots,p_m\}\subset M^n$ there exists a Green's function for $\Gamma$ with poles $p_1,\dots,p_m$. Then there exists a smooth metric $g\in[g_0]$ satisfying $\lambda(g^{-1}A_g)\in\Gamma$ on $M^n$. 
	\end{thm}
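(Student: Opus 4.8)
The plan is to use the Green's function to manufacture a \emph{global} continuous metric on $M^n$ satisfying the viscosity condition \eqref{40} which is, in addition, strictly admissible (with $\lambda(\cdot)\in\Gamma$) near the former poles, and then to invoke Theorem~\ref{e''}. We may assume $\Gamma\ne\Gamma_1^+$: when $\Gamma=\Gamma_1^+$ the Green's function is, away from the poles, a positive solution of the conformal Laplacian equation with the prescribed distributional sources at the $p_i$, and pairing with the first eigenfunction of $L_{g_0}$ shows $Y(M^n,[g_0])>0$, so the classical Yamabe theory applies directly.

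Set $\hat u_0 = -\tfrac{2}{n-2}\log w$ on $M^n\setminus\{p_1,\dots,p_m\}$, so that $g_{\hat u_0}=w^{4/(n-2)}g_0$ and hence $\lambda(g_{\hat u_0}^{-1}A_{g_{\hat u_0}})\in\partial\Gamma\subseteq\overline\Gamma$ in the viscosity sense there. Since $w$ is positive and continuous away from the poles and blows up at each $p_i$, the function $\hat u_0$ is bounded below, by some $-K$, on $M^n\setminus\bigcup_i B_{r_i}(p_i)$ for any fixed radii, while $\hat u_0\to-\infty$ at each $p_i$. Near each $p_i$, in $g_0$-normal coordinates, define $\hat v = A_i+\tfrac c2|x|^2$ on $B_{r_i}(p_i)$; using \eqref{301} (with $t=1$) and the scale-invariance of the cone, one checks that once $r_i$ is small relative to $c$, the eigenvalues $\lambda(g_{\hat v}^{-1}A_{g_{\hat v}})$ lie within a fixed distance (depending only on $\sup_{M^n}|A_{g_0}|$ and the background geometry) of $c\,(1,\dots,1)$, so that for $c$ large one has $\lambda(g_{\hat v}^{-1}A_{g_{\hat v}})\in\Gamma_n^+\subseteq\Gamma$ throughout $B_{r_i}(p_i)$. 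Extend $\hat v$ to a smooth function on $M^n$ with $\hat v\le A_i+\tfrac c2 r_i^2\le -K-1$ off $\bigcup_i B_{r_i}(p_i)$. Choosing the parameters in the order $c$, then $r_i$, then $A_i$, we obtain $\hat v<\hat u_0$ on and outside $\bigcup_i B_{r_i}(p_i)$ and $\hat v>\hat u_0$ on a neighbourhood of each $p_i$.

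Now put $\hat u=\max(\hat u_0,\hat v)$ and $g_{\hat u}=e^{-2\hat u}g_0$. Since $\hat u=\hat v$ near the $p_i$, $\hat u$ is continuous on all of $M^n$; on the open set $\{\hat u_0>\hat v\}$ (which avoids the $p_i$) it equals $\hat u_0$ and hence satisfies \eqref{40}; on $\{\hat v>\hat u_0\}\subseteq\bigcup_i B_{r_i}(p_i)$ it equals $\hat v$, where $\lambda(g_{\hat u}^{-1}A_{g_{\hat u}})\in\Gamma$; and since the crease $\{\hat u_0=\hat v\}$ is compactly contained in $\bigcup_i B_{r_i}(p_i)$, on a neighbourhood of it both $\hat u_0$ and $\hat v$ satisfy $\lambda(\cdot)\in\overline\Gamma$ in the viscosity sense, so $\hat u$ does too by the standard stability of viscosity subsolutions of the inclusion \eqref{40} under maxima. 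Thus $g_{\hat u}\in C^0(M^n)$ satisfies \eqref{40} on $M^n$ and is smooth with $\lambda(g_{\hat u}^{-1}A_{g_{\hat u}})\in\Gamma$ near $p_1$. In particular $g_{\hat u}$ is not a viscosity solution of $\lambda(g_{\hat u}^{-1}A_{g_{\hat u}})\in\partial\Gamma$ on $M^n$, since such a solution must satisfy $\lambda(\cdot)\in\partial\Gamma$ at every point of $C^2$-regularity, which fails at $p_1$; and hence it is not a smooth solution of $\operatorname{Ric}_{g_{\hat u}}\equiv 0$ either. Taking for $f$ any defining function of $\Gamma$ as in \cite[Appendix A]{LN20}, all hypotheses of Theorem~\ref{e''} now hold (using $(1,0,\dots,0)\in\Gamma$), and we conclude that there is a smooth metric $g\in[g_0]$ with $\lambda(g^{-1}A_g)\in\Gamma$ on $M^n$.

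The main obstacle is the gluing step: keeping $\hat u=\max(\hat u_0,\hat v)$ inside $\overline\Gamma$ in the viscosity sense \emph{across} the crease forces the crease into the region where the cap $\hat v$ is itself a $\Gamma$-subsolution, and it is this requirement that dictates both the explicit form of $\hat v$ and the order in which the parameters must be chosen; it rests on the crude lower bound $\hat u_0\ge -K$ away from the poles together with the fact that $w\to\infty$ at each pole. Beyond this, the remaining points — that the Green's function genuinely blows up at its poles, and that the relevant viscosity notions are stable under maxima and restrictions — are routine. The hypothesis $(1,0,\dots,0)\in\Gamma$ enters only to make Theorem~\ref{e''} (and the strong comparison principle underlying it) available, which is precisely what disposes of the borderline scenario in which $g_{\hat u}$ might otherwise be a nontrivial $C^{1,1}$ solution of $\lambda(\cdot)\in\partial\Gamma$.
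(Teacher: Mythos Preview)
Your proof is correct and follows essentially the same strategy as the paper: cap off the Green's function near the poles to produce a global continuous viscosity subsolution which is strictly admissible somewhere, then invoke Theorem~\ref{e''} (the paper uses the equivalent Theorem~\ref{a3}). The only cosmetic difference is that the paper first conformally changes the background so that $\lambda(g_0^{-1}A_{g_0})\in\Gamma$ near each pole and then caps $w$ by a large constant $M$ (i.e.~$\widetilde w=\min(w,M)$), whereas you keep $g_0$ fixed and cap $\hat u_0$ with an explicit quadratic $\hat v$; these are the same construction in different parametrizations.
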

	
	\begin{rmk}
		It would be interesting to determine whether Theorem \ref{305} is true without the condition $(1,0,\dots,0)\in\Gamma$, i.e.~assuming $(1,0,\dots,0)\in\partial\Gamma$. 
	\end{rmk}

	The plan of the paper is as follows. In Section \ref{scp} we establish a strong comparison principle, which will be used as a tool in proving our main results. Section \ref{est} is devoted to the proof of Theorems \ref{f'} and \ref{e}. We start by stating an existence and uniqueness result for \eqref{2} -- see Theorem \ref{a}. In Section \ref{100} we show that \eqref{40} and $\Gamma\not=\Gamma_1^+$ imply either $g_{\hat{u}}$ is smooth with $\operatorname{Ric}_{g_{\hat{u}}}\equiv 0$, or $Y(M^n,[g_0])>0$. In Section \ref{101} we obtain \textit{a priori} estimates for \eqref{2}, assuming $Y(M^n,[g_0])>0$. In Section \ref{102} we prove Theorem \ref{a} (Theorem \ref{f'} then follows immediately). In Section \ref{103}, we use Theorem \ref{a} to prove Theorem \ref{e} by a suitable limiting argument. In Section 4 we present refinements of Theorems \ref{e} and \ref{a}, and in particular we prove Theorem \ref{e''}. In Section \ref{g2} we address some geometric consequences of main results -- in particular, we prove Theorems \ref{61'}, \ref{d'}, \ref{700} and \ref{305}.

	\section{A strong comparison principle}\label{scp}
	
	In this section we prove a strong comparison principle for subsolutions and supersolutions to the equation $\lambda(g^{-1}A_g)\in\partial\Gamma$ under the condition that $(1,0,\dots,0)\in\Gamma$ (recall that we always assume $\Gamma$ satisfies \eqref{A} and \eqref{B}). As a consequence, this yields a strong comparison principle for the equation $\lambda(g^{-1}A_g)\in\partial\Gamma^\tau$ for $\tau\in(0,1)$, regardless of whether $(1,0,\dots,0)\in\Gamma$ -- see Proposition \ref{60} in Appendix \ref{appb}. Our strong comparison principle will be used to obtain \textit{a priori} estimates in Section 3 and is also of independent interest. Our discussion will involve semicontinuous functions, although for the purpose of obtaining our main results in this paper, the reader may assume that all involved functions are continuous.

	We begin by recalling the notion of viscosity subsolutions and viscosity supersolutions, which have appeared extensively in the literature. In what follows, for a domain $\Omega\subset M^n$, we denote by $\operatorname{USC}(\Omega)$ the set of upper semicontinuous functions $u:\Omega\rightarrow \mathbb{R}\cup\{-\infty\}$, and $\operatorname{LSC}(\Omega)$ the set of lower semicontinuous functions $u:\Omega\rightarrow \mathbb{R}\cup\{+\infty\}$. 
	
	For the convenience of the reader, we recall the following expression for $A_{g_u}$: 
	
	\begin{equation}\label{501}
	A_{g_u} = \nabla_{g_0}^2 u  - \frac{1}{2}|du|_{g_0}^2g_0 + du\otimes du + A_{g_0}. 
	\end{equation}
	
	\begin{defn}\label{311}
		Suppose $\Gamma$ satisfies \eqref{A} and \eqref{B}, and let $\Omega\subset M^n$ be a domain equipped with a smooth Riemannian metric $g_0$. For $u\in \operatorname{USC}(\Omega)$, we say that $g_u=e^{-2u}g_0$ satisfies
		\begin{equation}\label{80}
		\lambda(g_u^{-1}A_{{g_u}})\in\overline{\Gamma} \text{ in the viscosity sense on }\Omega
		\end{equation}
		if for any $x_0\in\Omega$ and $\phi\in C^2(\Omega)$ satisfying $u(x_0)=\phi(x_0)$ and $u(x) \leq \phi (x)$ near $x_0$, there holds
		\begin{equation*}
		\lambda(g_\phi^{-1}A_{g_\phi})(x_0)\in\overline{\Gamma}. 
		\end{equation*} 
		For $u\in \operatorname{LSC}(\Omega)$, we say that $g_u=e^{-2u}g_0$ satisfies 
		\begin{equation}\label{81}
		\lambda(g_u^{-1}A_{{g_u}})\in\mathbb{R}^n\backslash\Gamma \text{ in the viscosity sense on }\Omega
		\end{equation}
		if for any $x_0\in \Omega$ and $\phi\in C^2(\Omega)$ satisfying $u(x_0) = \phi(x_0)$ and $u(x) \geq \phi(x)$ near $x_0$, there holds 
		\begin{equation*}
		\lambda(g_\phi^{-1}A_{g_\phi})(x_0)\in \mathbb{R}^n\backslash \Gamma.
		\end{equation*}
		We to refer to an upper semicontinuous (resp.~lower semicontinuous) function $u$ satisfying \eqref{80} (resp.~\eqref{81}) as a \textit{viscosity subsolution} (resp.~\textit{viscosity supersolution}) of the equation $\lambda(g_u^{-1}A_{g_u})\in\partial\Gamma$ on $\Omega$. We call $u\in C^0(\Omega)$ a \textit{viscosity solution} of the equation $\lambda(g_u^{-1}A_{g_u})\in\partial\Gamma$ if it is both a viscosity subsolution and a viscosity supersolution.
	\end{defn}

	\begin{rmk}\label{84}
		If $u\in C^{1,1}_{\operatorname{loc}}(\Omega)$, then \eqref{80} (resp.~\eqref{81}) is equivalent to $\lambda(g_u^{-1}A_{g_u})\in\overline{\Gamma}$ (resp.~$\lambda(g_u^{-1}A_{g_u})\in\mathbb{R}^n\backslash{\Gamma}$) a.e.~in $\Omega$. Therefore, if $u\in C^{1,1}_{\operatorname{loc}}(\Omega)$, then $u$ is a viscosity solution to $\lambda(g_u^{-1}A_{g_u})\in\partial\Gamma$ if and only if $\lambda(g_u^{-1}A_{g_u})\in\partial\Gamma$ a.e.~in $\Omega$. We refer to e.g.~\cite[Lemma 2.5]{LNW20a} for a proof of these facts. 
	\end{rmk}
	
	Our main result in this section is the following strong comparison principle:
	
	\begin{thm}\label{t20}
		Let $\Omega\subset M^n$ be a domain equipped with a smooth Riemannian metric $g_0$ and let $\Gamma$ satisfy \eqref{A} and \eqref{B}. Suppose that for some $\tau\in(0,1)$, $u_1\in \operatorname{USC}(\Omega)$ satisfies 
		\begin{equation}\label{50}
		\lambda(g_{u_1}^{-1}A_{g_{u_1}})\in\overline{\Gamma^\tau} \quad\mathrm{in~the~viscosity~sense~on~}\Omega
		\end{equation}
		and $u_2\in C_{\operatorname{loc}}^{1,1}(\Omega)$ satisfies 
		\begin{equation}\label{51}
		\lambda(g_{u_2}^{-1}A_{g_{u_2}})\in\mathbb{R}^n\backslash{\Gamma^\tau} \quad\mathrm{a.e.~in~}\Omega.
		\end{equation}
		If $u_1 \leq u_2$ in $\Omega$, then either $u_1\equiv u_2$ in $\Omega$ or $u_1<u_2$ in $\Omega$. 
	\end{thm}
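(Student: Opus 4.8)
\emph{Proof proposal.}
The plan is to obtain the dichotomy from a strong maximum principle for the difference $w:=u_1-u_2$. Since $u_1-u_2$ is upper semicontinuous, the set $S:=\{x\in\Omega:u_1(x)=u_2(x)\}$ is relatively closed in $\Omega$; as $\Omega$ is connected it suffices to show that $S$ is also open, since then $S=\emptyset$ (i.e.\ $u_1<u_2$ on $\Omega$) or $S=\Omega$ (i.e.\ $u_1\equiv u_2$ on $\Omega$). Now $w\leq 0$ and $w=0$ on $S$, so this will follow from the classical strong maximum principle --- a viscosity subsolution of a linear uniformly elliptic equation attaining an interior maximum is constant --- once we establish that $w$ is a viscosity subsolution on $\Omega$ of a linear equation $a^{ij}(x)\,\partial_{ij}w+b^i(x)\,\partial_i w\geq 0$ with $(a^{ij})$ uniformly elliptic (with constants depending only on $\Gamma^\tau$, hence on $\tau$, and on $g_0$) and $b^i\in L^\infty_{\operatorname{loc}}$. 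There is no zeroth-order term: because $\Gamma^\tau$ is a cone, the inclusion $\lambda(g_u^{-1}A_{g_u})\in\overline{\Gamma^\tau}$ is, through \eqref{501}, an equation for $u$ invariant under $u\mapsto u+\mathrm{const}$.

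The uniform ellipticity of $(a^{ij})$ is where $\tau\in(0,1)$ enters decisively, and is the geometric crux. Since $\tau<1$ one has $(1,0,\dots,0)\in\Gamma^\tau$ --- its image under $\lambda\mapsto\lambda^\tau$ is $(1,1-\tau,\dots,1-\tau)\in\Gamma_n^+\subseteq\Gamma$ --- so, together with $\Gamma_n^+\subseteq\Gamma^\tau$ and convexity, $\Gamma^\tau$ contains a \emph{fixed} open convex cone strictly larger than $\overline{\Gamma_n^+}$; dually, $(\Gamma^\tau)^\ast$ consists of positive-semidefinite directions of uniformly bounded condition number. Equivalently: although a defining function $f^\tau$ of $\Gamma^\tau$ may have gradient of unbounded magnitude on $\partial\Gamma^\tau$, its gradient \emph{direction} --- the normal to $\partial\Gamma^\tau$ --- is uniformly non-degenerate. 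This is the quantitative form of the local strict ellipticity of $(f^\tau,\overline{\Gamma^\tau})$ noted earlier, and it is exactly what yields a uniformly elliptic linearization. We also use the monotonicity $\overline{\Gamma^\tau}+\overline{\Gamma_n^+}\subseteq\overline{\Gamma^\tau}$, which follows from \eqref{A}--\eqref{B}.

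It remains to produce the linear inequality for $w$, and this is the main obstacle, since $u_1$ is merely upper semicontinuous and carries no pointwise second-order information. The standard device is to pass to sup-convolutions $u_1^\varepsilon(x)=\sup_y\big(u_1(y)-\tfrac1{2\varepsilon}|x-y|^2\big)$, which are semiconvex, hence twice differentiable almost everywhere, and which satisfy $\lambda(g_{u_1^\varepsilon}^{-1}A_{g_{u_1^\varepsilon}})\in\overline{\Gamma^\tau}$ up to an error tending to $0$ with $\varepsilon$; since $u_2\in C^{1,1}_{\operatorname{loc}}$ is likewise twice differentiable almost everywhere, at almost every $x$ one applies the viscosity inequalities to paraboloids with Hessians $D^2u_1^\varepsilon(x)+o(1)$ and $D^2u_2(x)-o(1)$, combines them using the concavity of $f^\tau$ and a supporting hyperplane to the cone $\{X:\lambda(g_0^{-1}X)\in\overline{\Gamma^\tau}\}$ --- whose normal, lying in the dual cone, is uniformly well-conditioned --- and then lets the auxiliary errors and $\varepsilon$ go to $0$ to reach the desired inequality for $w$, with the quadratic-gradient term from \eqref{501} absorbed into $b^i$ near a would-be maximum of $w$ (where $\nabla w$ is small) or handled by the strong-maximum-principle variant for $a^{ij}\partial_{ij}w-C|\nabla w|\geq 0$. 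The same contradiction can be obtained, perhaps more transparently, through the maximum principle for semicontinuous functions: after localizing near a putative point of $\partial S$, one doubles variables, produces matrices $X\leq Y+o(1)$ realising the two opposing inclusions, and concludes from $X\leq Y$, the monotonicity of $\Gamma^\tau$ and a strict gap furnished by the strict inequality $u_1<u_2$ off $S$. In either route $\tau<1$ is indispensable: for $\tau=1$ the inclusion $\lambda(\cdot)\in\overline{\Gamma}$ is only degenerate elliptic and the statement fails in general, cf.\ \cite{LN09}.
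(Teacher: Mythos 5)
Your proposal takes a genuinely different route from the paper's. The paper establishes a weak comparison principle (Lemma~\ref{t18}) by sup-convolution and the ABP estimate, and then reduces the strong comparison to it by a Hopf-type barrier construction: the perturbation $\tilde{u}_{\mu,\nu}=u_2-\mu(h-\nu)\xi$ is shown to be a \emph{strict} supersolution via the computation \eqref{t39}, in which the decisive term $-(1-\tau)\mu\alpha\nu\xi(x)g_0(x)$ provides a uniform negative displacement of the Schouten tensor precisely because $\tau<1$. You instead propose to linearize along supporting hyperplanes of the convex set $\{X:\lambda(g_0^{-1}X)\in\overline{\Gamma^\tau}\}$ and apply the classical strong maximum principle to $w=u_1-u_2$. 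Your geometric observation — that the normals to $\partial\Gamma^\tau$, lying in the dual cone, have uniformly bounded condition number when $\tau<1$ (since $(1,0,\dots,0)\in\Gamma^\tau$) — is correct, and it is the same cone geometry that the paper's barrier computation exploits. If the linearization route could be closed it would be shorter and arguably cleaner.

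However, there is a real gap at the step you yourself identify as the main obstacle: you do not actually derive that $w=u_1-u_2$ is a viscosity subsolution of a uniformly elliptic linear equation. Sup-convolution turns $u_1$ into a semiconvex $u_1^\varepsilon$ satisfying the inclusion up to $o(1)$, and at a.e.\ point where both $u_1^\varepsilon$ and $u_2$ are punctually twice differentiable the supporting-hyperplane inequality yields $\langle N(x),D^2(u_1^\varepsilon-u_2)\rangle+\text{l.o.t.}\geq -o(1)$, with $N(x)$ in the well-conditioned dual cone. But the coefficient matrix $N(x)$ varies with both $x$ and $\varepsilon$, the good set varies with $\varepsilon$, and the passage $\varepsilon\to 0$ from this a.e.\ pointwise inequality to the viscosity subsolution property of $w$ for a fixed Pucci-type operator is not a soft limit — it is precisely the delicate stability step that the paper handles inside Lemma~\ref{t18} via the ABP estimate (inequality \eqref{75}) and the subsequent extraction of a good point $y_{\ep,\eta}$. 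Likewise, your alternative doubling-of-variables sketch would at best yield a weak comparison principle; promoting it to the dichotomy (strong comparison) still requires either a Hopf barrier or a genuinely new idea, and the phrase ``a strict gap furnished by the strict inequality $u_1<u_2$ off $S$'' is exactly the ingredient the paper's barrier supplies, not one that is freely available. Finally, note that the paper explicitly remarks that the non-degeneracy condition of \cite{LNW20a} \emph{fails} in this setting — the authors found the straightforward ``locally strictly elliptic + non-degeneracy'' route blocked and built the barrier argument to circumvent it — so asserting without verification that the linearization goes through is precisely the point that needs proof. Until the subsolution property of $w$ (or the strict supersolution/barrier) is rigorously established, the proposal does not constitute a complete proof of Theorem~\ref{t20}.
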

	
	\begin{rmk}
		As noted in the introduction, Theorem \ref{t20} does not hold for $\tau=1$ \cite{LN09}. 
	\end{rmk}
	
	We show in Appendix \ref{appb} that if $\Gamma$ satisfies \eqref{A} and \eqref{B}, then $(1,0,\dots,0)\in\Gamma$ if and only if there exists some $\widetilde{\Gamma}\subset\Gamma$ satisfying \eqref{A} and \eqref{B} and a number $\tau<1$ such that $\Gamma = (\widetilde{\Gamma})^\tau$. Thus, we have the following equivalent formulation of Theorem \ref{t20}:
	
	\begin{customthm}{\ref{t20}$'$}
		\textit{Let $\Omega\subset M^n$ be a domain equipped with a smooth Riemannian metric $g_0$ and let $\Gamma$ satisfy \eqref{A}, \eqref{B} and $(1,0,\dots,0)\in\Gamma$. Suppose that $u_1\in \operatorname{USC}(\Omega)$ satisfies 
			\begin{equation}
			\lambda(g_{u_1}^{-1}A_{g_{u_1}})\in\overline{\Gamma} \quad\mathrm{in~the~viscosity~sense~on~}\Omega
			\end{equation}
			and $u_2\in C_{\operatorname{loc}}^{1,1}(\Omega)$ satisfies 
			\begin{equation}
			\lambda(g_{u_2}^{-1}A_{g_{u_2}})\in\mathbb{R}^n\backslash\Gamma \quad\mathrm{a.e.~in~}\Omega.
			\end{equation}
			If $u_1 \leq u_2$ in $\Omega$, then either $u_1\equiv u_2$ in $\Omega$ or $u_1<u_2$ in $\Omega$. }
	\end{customthm}

	Our proof of Theorem \ref{t20} is inspired by that of Li, Nguyen \& Wang \cite[Theorem 2.3]{LNW20a}, which in turn uses ideas from \cite{CLN13} and \cite{LNW18}. In \cite{LNW20a}, the authors obtain a strong comparison principle for fully nonlinear, locally strictly elliptic equations satisfying a non-degeneracy condition (see equation (2.3) therein). In our setting, the non-degeneracy condition of \cite{LNW20a} is not satisfied.

	\subsection{A preliminary strong comparison principle}

	We begin with a preliminary comparison principle, analogous to \cite[Proposition 2.6]{LNW20a}. We note that whilst a counterpart to \cite[Proposition 2.7]{LNW20a} is also possible in our setting, it will not be needed in this paper.

	\begin{lem}\label{t18}
		Let $\Omega\subset \mathbb{R}^n$ be an open Euclidean ball, $g_0$ a smooth Riemannian metric on $\overline{\Omega}$, and suppose $\Gamma$ satisfies \eqref{A} and \eqref{B}. Suppose also that $u_1\in \operatorname{USC}(\overline{\Omega})$ satisfies
		\begin{equation}\label{50'}
		\lambda(g_{u_1}^{-1}A_{g_{u_1}})\in\overline{\Gamma} \quad\mathrm{in~the~viscosity~sense~on~}\Omega
		\end{equation}
		and that there exists a constant $\delta>0$ such that $u_2\in C_{\operatorname{loc}}^{1,1}(\Omega)\cap \operatorname{LSC}(\overline{\Omega})$ satisfies 
		\begin{equation}\label{12}
		\lambda(g_{u_2}^{-1}A_{g_{u_2}} + \delta I)\in\mathbb{R}^n\backslash\Gamma \quad \mathrm{~a.e.~in~}\Omega.
		\end{equation}
		If $u_1 \leq u_2$ in $\Omega$ and $u_1<u_2$ on $\partial\Omega$, then $u_1<u_2$ in $\Omega$. 
	\end{lem}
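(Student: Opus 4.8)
\emph{Proof plan.} The plan is to argue by contradiction via a Hopf-type barrier, following the scheme of the preliminary comparison principle \cite[Proposition 2.6]{LNW20a} and adapting it to the degenerate situation here. Suppose $u_1(z_0) = u_2(z_0)$ for some $z_0 \in \Omega$. Since $u_1 \in \operatorname{USC}(\overline\Omega)$ and $u_2 \in \operatorname{LSC}(\overline\Omega)$, the function $u_2 - u_1$ is lower semicontinuous, nonnegative on $\Omega$, and strictly positive on the compact set $\partial\Omega$; hence the contact set $E := \{x \in \Omega : u_1(x) = u_2(x)\}$ is a nonempty relatively closed subset of $\Omega$ that is bounded away from $\partial\Omega$, and $U := \{u_1 < u_2\} = \Omega \setminus E$ is open, nonempty, and not all of $\Omega$. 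A standard growing-ball argument then produces $p \in U$ and $\rho > 0$ with $B := B_\rho(p) \subset U$, $\overline B \subset \Omega$, and a point $z_0 \in \partial B \cap E$; moreover $u_2 - u_1 \ge c_0 > 0$ on $\partial B_{\rho/2}(p)$.

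Next I would build a barrier on the closed annulus $\overline{\mathcal A} := \overline{B_\rho(p)} \setminus B_{\rho/2}(p)$: take $v(x) := e^{-\alpha|x-p|^2} - e^{-\alpha\rho^2}$ with $\alpha$ large, so that $v > 0$ in $\mathcal A$, $v \le 0$ on $\mathbb R^n \setminus B_\rho(p)$, $v \equiv 0$ on $\partial B_\rho(p)$, $Dv(z_0) \ne 0$, and — crucially — $v$ together with its first and second derivatives is $O\big((\alpha + \alpha^2)e^{-\alpha\rho^2/4}\big)$ on $\overline{\mathcal A}$, i.e.\ as small as desired once $\alpha$ is large. For $w := u_2 - \eta v$ one has $g_w^{-1}A_{g_w} = e^{-2\eta v}\,g_{u_2}^{-1}\big(A_{g_{u_2}} + \eta B_1 + \eta^2 B_2\big)$, with $B_1, B_2$ built from $Dv$ and $D^2v$. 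The claim to verify is that for $\alpha$ large and then $\eta$ small, $w \in C^{1,1}_{\operatorname{loc}}(\mathcal A)\cap\operatorname{LSC}(\overline{\mathcal A})$ is a strict supersolution on $\mathcal A$ in the sense that $\lambda(g_w^{-1}A_{g_w} + \tfrac{\delta}{2}I) \in \mathbb R^n \setminus \Gamma$ a.e.\ in $\mathcal A$, while $u_1 \le w$ on $\partial\mathcal A$ (using $v \equiv 0$ on $\partial B_\rho(p)$ and $\eta\sup v \le c_0$). This is where hypothesis \eqref{12} is used decisively, and where the degeneracy of the operator is not an obstacle: from \eqref{12}, the boundedness of $g_{u_2}^{-1}A_{g_{u_2}}$ on the compact $\overline{\mathcal A}$, and the inclusion $\overline\Gamma + \Gamma_n^+ \subseteq \Gamma$ (which follows from \eqref{A}--\eqref{B}), one shows $\operatorname{dist}\big(\lambda(g_{u_2}^{-1}A_{g_{u_2}}), \overline\Gamma\big) \ge c(\delta) > 0$ a.e.\ in $\mathcal A$; since $\Gamma$ is a cone this "room" survives the $O(\eta) + O(\eta^2)$ perturbations and the positive scalar factor $e^{-2\eta v}$ once $\eta$ is small, with no strict ellipticity needed.

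With the barrier in hand, a weak comparison principle on $\mathcal A$ between the viscosity subsolution $u_1$ and the $C^{1,1}$ strict supersolution $w$ (such a weak comparison — unlike the strong one in Theorem \ref{t20} — holds irrespective of $\tau$) gives $u_1 \le w$ in $\mathcal A$, hence on $\overline{\mathcal A}$. Combined with $u_1 \le u_2 = w$ on $\partial B_\rho(p)$ and $u_1 \le u_2 \le w$ on $\Omega \setminus \overline B$ (where $v \le 0$), this shows the $C^{1,1}$ function $w$ touches $u_1$ from above at the interior point $z_0$, with $w(z_0) = u_2(z_0)$ and $Dw(z_0) \ne Du_2(z_0)$. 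The contradiction is then extracted as in \cite[Proposition 2.6]{LNW20a}: one regularises $u_1$ by a sup-convolution to gain semiconvexity (so that almost everywhere $u_1$ has a classical second-order expansion and the subsolution inequality holds there for the true Hessian), passes to twice-differentiability points $x_k \to z_0$ of $u_2$ along which $D^2u_2(x_k)$ converges, and confronts the subsolution property of $u_1$ at $z_0$ with the (strict, by \eqref{12}) supersolution property of $u_2$ along $(x_k)$ — once more invoking $\overline\Gamma + \Gamma_n^+ \subseteq \Gamma$ — to reach a contradiction; the boundary condition $u_1 < u_2$ on $\partial\Omega$ then rules out the excluded case and yields $u_1 < u_2$ throughout $\Omega$.

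I expect the main obstacle to be the construction and verification of the barrier in the second step. In the non-degenerate framework of \cite{LNW20a} strict ellipticity absorbs the error terms produced by the barrier; here one must instead rely entirely on the gap $\delta$ in \eqref{12} together with the cone identity $\overline\Gamma + \Gamma_n^+ \subseteq \Gamma$, and the key quantitative point — a uniform lower bound for $\operatorname{dist}(\lambda(g_{u_2}^{-1}A_{g_{u_2}}), \overline\Gamma)$ in terms of $\delta$ and $\|u_2\|_{C^{1,1}}$ — must be made precise. A secondary technical point is reconciling the merely upper semicontinuous regularity of $u_1$ and the merely $C^{1,1}$ regularity of $u_2$ with the use of pointwise second-order information, handled by sup-convolution and the almost-everywhere second differentiability of $C^{1,1}$ functions, exactly as in \cite{LNW20a}.
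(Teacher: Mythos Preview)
Your plan conflates the proof of Lemma~\ref{t18} with the proof of Theorem~\ref{t20}, and as a result is both circular and redundant. The Hopf-type barrier you build on the annulus is precisely the construction the paper uses to prove Theorem~\ref{t20} \emph{from} Lemma~\ref{t18} --- it is not part of the proof of Lemma~\ref{t18} itself. The step where you write ``a weak comparison principle on $\mathcal A$ \dots\ gives $u_1 \le w$ in $\mathcal A$'' is the gap: you only have $u_1 \le w$ on $\partial\mathcal A$, not in the interior (since $w = u_2 - \eta v < u_2$ there), and the weak comparison you invoke to pass from boundary to interior is \emph{at least as hard as} Lemma~\ref{t18} itself. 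Indeed, Lemma~\ref{t18} already grants you $u_1 \le u_2$ throughout $\Omega$ as a hypothesis and only asks you to exclude interior contact; the weak comparison you need would have to produce the interior inequality from boundary data alone, which in this degenerate setting requires exactly the sup-convolution\,+\,ABP machinery you defer to your last paragraph.

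The paper's proof is direct and avoids the barrier entirely. Assuming an interior contact point $\hat x$, one takes the sup-convolution $\hat u_\varepsilon$ of $u_1$ (gaining semiconvexity and an a.e.\ subsolution property at the level of classical second jets), shifts by a constant $\tau$ so that $\xi := \hat u_\varepsilon - u_2 + \tau$ has $\sup_{\Omega'}\xi = \eta > 0$, and applies the Alexandrov--Bakelman--Pucci estimate to the concave envelope $\Gamma_{\xi^+}$ to produce a set of positive measure where $\xi = \Gamma_{\xi^+}$. At almost every such point $y_{\varepsilon,\eta}$, both $\hat u_\varepsilon$ and $u_2$ are punctually twice differentiable, $|d\hat u_\varepsilon - du_2| \le C\eta$, and $\nabla^2_{g_0}\hat u_\varepsilon \le \nabla^2_{g_0}u_2 + C\eta g_0$; combining these with the subsolution inequality for $\hat u_\varepsilon$ and passing $(\varepsilon,\eta)\to(0,0)$ forces the distance from $\lambda(g_{u_2}^{-1}A_{g_{u_2}})$ to $\overline\Gamma$ to vanish along a subsequence, contradicting the uniform gap \eqref{12}. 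Your final paragraph gestures at this mechanism, but once you have it, the annulus, the barrier $v$, and the weak comparison on $\mathcal A$ are all unnecessary detours.
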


	We establish some notation before proving Lemma \ref{t18}. Let $(x,z,p,B)\in \Omega \times \mathbb{R}\times \mathbb{R}^n \times \operatorname{Sym}_n(\mathbb{R})$. In what follows, for shorthand we write
	\begin{equation}\label{70}
	\lambda(x,z,p,B)\in\overline{\Gamma}\quad\text{if}\quad \lambda\bigg[g_0^{-1}(x)\bigg(B - \frac{1}{2}|p|_{g_0(x)}^2g_0(x) + p\otimes p + A_{g_0(x)}(x)\bigg)\bigg]\in\overline{\Gamma}. 
	\end{equation}
	Note that when $z=u(x)$, $p = du(x)$ and $B=\nabla_{g_0(x)}^2 u(x)$, the quantity inside the square parentheses in \eqref{70} is the $(1,1)$-Schouten tensor of the metric $e^{-2u}g_0$ at $x$, and $(z,p,B) = (u(x),du(x),\nabla_{g_0(x)}^2 u(x))$ is the second order jet of $u$ at $x$, henceforth denoted by $J^2_{g_0}[u](x)$.
	
	The proof of Lemma \ref{t18} is an adaptation of the proof of \cite[Proposition 2.6]{LNW20a}. To aid the exposition, we first prove a weaker version of Lemma \ref{t18} in which we assume $u_1\in C^0(\overline{\Omega})$:
	
	\begin{customlem}{\ref{t18}$'$}\label{t18'}
		\textit{Lemma \ref{t18} holds under the stronger assumption that $u_1\in C^0(\overline{\Omega})$. }
	\end{customlem}

	\begin{proof}[Proof of Lemma \ref{t18'}]
		We suppose for a contradiction that there is a point $\hat{x}\in\Omega$ such that $u_1(\hat{x}) = u_2(\hat{x})$, and we let $\Omega'\Subset\Omega$ be a ball concentric to $\Omega$ such that $\hat{x}\in \Omega'$ and $u_1<u_2$ on $\partial\Omega'$. For $\epsilon>0$ small and $x\in \Omega'$, we denote by $\hat{u}_\ep(x)$ the $\sup$-convolution of $u_1$ with respect to $g_0$:
		\begin{equation}\label{71}
		\hat{u}_\ep(x) \defeq \sup_{y\in\Omega}\big(u_1(y) - \ep^{-1}d_{g_0}(x,y)^2\big) \geq u_1(x). 
		\end{equation}
		It is well-known that for $\epsilon$ sufficiently small, the supremum in \eqref{71} is attained in $\Omega$, namely for each $x\in \Omega'$ there exists a point $x^* = x^*(\ep, x)\in\Omega$ such that 
		\begin{equation*}
		\hat{u}_\ep(x) = u_1(x^*) - \ep^{-1}d_{g_0}(x,x^*)^2.
		\end{equation*}
		Moreover, $\hat{u}_\ep$ is punctually second order differentiable (see \cite{CC95} for the definition) a.e.~in $\Omega'$ with $\nabla^2_{g_0}\hat{u}_\ep \geq -C(\Omega',g_0)\ep^{-1}g_0$ a.e.~in $\Omega'$, and $\hat{u}_\ep$ converges monotonically to $u_1$ as $\ep\rightarrow 0$. We will also use the fact that, if $m:[0,\infty)\rightarrow [0,\infty)$ is a modulus of continuity for $u_1$ in $\Omega'$, i.e.~$m(0)=0$ and $|u_1(p) - u_1(q)| \leq m(d_{g_0}(p,q))$ for all $p,q\in\overline{\Omega'}$, then
		\begin{equation}\label{115}
		\ep^{-1}d_{g_0}(x,x^*)^2 \leq m\bigg(\Big(C(\Omega',g_0)\ep \sup_{\overline{\Omega'}}|u_1|\Big)^{1/2}\bigg)
		\end{equation}
		for all $x\in\Omega'$, from which it follows that 
		\begin{equation}\label{500}
		\lim_{\ep\rightarrow 0} \ep^{-1}d_{g_0}(x,x^*)^2 = 0 \quad\text{for all }x\in\Omega'.
		\end{equation}
		For a proof of these facts, see \cite{CC95, LNW18}. \medskip

		\noindent\textbf{Claim:} \textit{There exists a sequence $(\ep, \eta) \rightarrow (0,0)$, a corresponding sequence of points $\{y_{\ep,\eta}\} \subset \Omega'$, and a constant $C_1$ independent of $\epsilon$ and $\eta$ such that \medskip 
			\begin{enumerate}
				\item$\hat{u}_\ep$ and $u_2$ are punctually second order differentiable at $y_{\ep,\eta}$, \medskip 
				\item for $y_{\ep,\eta}^*  = x^*(\ep, y_{\ep,\eta})$,
				\begin{equation*}
				\lambda \big(y_{\ep,\eta}^*, \hat{u}_\ep(y_{\ep,\eta}) + \ep^{-1}d_{g_0}(y_{\ep,\eta},y_{\ep,\eta}^*)^2, d\hat{u}_\ep(y_{\ep,\eta}),\nabla_{g_0}^2u_2(y_{\ep,\eta}) + C_1\eta g_0\big)\in\overline{\Gamma},
				\end{equation*}
				\item $\hat{u}_\ep (y_{\ep,\eta}) -  u_2 (y_{\ep,\eta})\rightarrow 0 $, and \medskip 
				\item $|d\hat{u}_\ep(y_{\ep,\eta}) - du_2(y_{\ep,\eta})| \leq C_1\eta$. \medskip
		\end{enumerate}}

		Assuming the claim for now, we explain how to obtain the desired contradiction. First observe that since $u_2\in C^{1,1}_{\operatorname{loc}}(\Omega)$, we have $|J^2_{g_0}[u_2](y_{\ep,\eta})| \leq C\|u_2\|_{C^{1,1}(\overline{\Omega'},g_0)}$, where here and below $C$ is a constant independent of $\ep$ and $\eta$, which may change from line to line. Therefore, after restricting to a subsequence of $(\ep,\eta)$ if necessary, there exists $(y_0,J_0)$ for which
		\begin{equation}\label{78'}
		(y_{\ep,\eta},J^2_{g_0}[u_2](y_{\ep,\eta}))\rightarrow (y_0,J_0). 
		\end{equation}
		Then by the third and fourth properties in our claim, \eqref{500} with $x=y_{\ep,\eta}$ and \eqref{78'},
		\begin{equation}\label{112}
		\big(y_{\ep,\eta}^*, \hat{u}_\ep(y_{\ep,\eta}) + \ep^{-1}d_{g_0}(y_{\ep,\eta},y_{\ep,\eta}^*)^2, d\hat{u}_\ep(y_{\ep,\eta}),\nabla_{g_0}^2u_2(y_{\ep,\eta}) + C_1\eta g_0\big)\rightarrow (y_0, J_0). 
		\end{equation}
		
		Appealing to \eqref{112} and the second property in our claim, we have that $\lambda (y_0,J_0)\in\overline{\Gamma}$. Returning to \eqref{78'}, we therefore see that distance from $\lambda (y_{\ep,\eta},J^2_{g_0}[u_2](y_{\ep,\eta}))$ to $\overline{\Gamma}$ tends to zero along a sequence $(\ep,\eta)\rightarrow (0,0)$, contradicting \eqref{12}. 
		
		To complete the proof of Lemma \ref{t18'}, it remains to prove the above claim. \medskip

		\noindent\textit{Proof of the claim.} We first show that if $x\in \Omega'$ is any point where $\hat{u}_\ep$ is punctually second order differentiable, then 
		\begin{equation}\label{76}
		\lambda (x^*, \hat{u}_\ep(x) + \ep^{-1}d_{g_0}(x,x^*)^2,d\hat{u}_\ep(x),\nabla_{g_0}^2\hat{u}_\ep(x)) \in\overline{\Gamma}. 
		\end{equation}
		Indeed, by definition of punctual second order differentiability at $x$, we have
		\begin{equation}\label{72}
		\hat{u}_{\ep}(\operatorname{exp}_x(z)) \leq \hat{u}_\ep(x) + d\hat{u}_\ep(x)(z)+ \frac{1}{2}\nabla^2_{g_0}\hat{u}_\ep(x)(z,z) + o(|z|_{g_0}^2)\quad\text{as }z\rightarrow 0,
		\end{equation}
		where $\operatorname{exp}_x:T_x\Omega\rightarrow \Omega$ is the exponential map at $x$ with respect to $g_0$. On the other hand, by definition of $\hat{u}_\ep$,
		\begin{align}\label{73}
		\hat{u}_\ep(\operatorname{exp}_x(z)) & \geq u_1(\operatorname{exp}_{x^*}(Pz)) - \ep^{-1}d_{g_0}\big(\operatorname{exp}_x(z),\operatorname{exp}_{x^*}(Pz)\big)^2,
		\end{align}
		where $P:T_x \Omega \rightarrow T_{x^*}\Omega$ is the parallel transport map along the unique length-minimising geodesic from $x$ to $x^*$. As shown in the proof of \cite[Proposition 2.4]{LN20}, the first and second variation formulae for length imply 
		\begin{equation}\label{116}
		d_{g_0}\big(\operatorname{exp}_x(z),\operatorname{exp}_{x^*}(Pz)\big)^2  = d_{g_0}(x,x^*)^2 + o(|z|_{g_0}^2)\quad\text{as }z\rightarrow 0.
		\end{equation}
		After substituting \eqref{116} into \eqref{73}, and then \eqref{73} back into \eqref{72}, we obtain
		\begin{equation}\label{74}
		u_1(\operatorname{exp}_{x^*}(Pz)) \leq \hat{u}_\ep(x) + \ep^{-1}d_{g_0}(x,x^*)^2 + d\hat{u}_\ep(x)(z) + \frac{1}{2}\nabla_{g_0}^2\hat{u}_\ep(x)(z,z) + o(|z|_{g_0}^2) \quad\text{as }z\rightarrow 0. 
		\end{equation}
		The inclusion \eqref{76} then follows from \eqref{74} and the fact that $u_1$ is a viscosity subsolution.

		We now construct a sequence of points $y_{\ep,\eta}$ with the properties stated in our claim. For $\ep, \eta >0$, let $\tau=\tau(\ep,\eta)$ be such that 
		\begin{equation}\label{304}
		\eta = \sup_{\Omega'} (\hat{u}_\ep - u_2 + \tau). 
		\end{equation}
		Recalling that $\hat{x}\in \Omega'$ is such that $u_1(\hat{x}) = u_2(\hat{x})$, we have
		\begin{align}\label{104}
		\tau = u_1(\hat{x}) - u_2(\hat{x}) + \tau \leq \hat{u}_\ep(\hat{x}) - u_2(\hat{x}) + \tau \leq \eta,
		\end{align}
		and since $u_1 \leq u_2$ in $\Omega$,
		\begin{equation}\label{114}
		\eta \geq \tau = \eta - \sup_{\Omega'}(\hat{u}_\ep - u_2) \geq \eta - \sup_{\Omega'}(\hat{u}_\ep - u_1). 
		\end{equation}

		Given $\ep>0$ sufficiently small so that $\hat{u}_\ep - u_2<0$ on $\partial \Omega'$, since $\tau\leq \eta$ (by \eqref{104}), we may choose $\eta$ sufficiently small so that
		\begin{equation}\label{105}
		\xi \defeq \hat{u}_\ep - u_2 + \tau < 0 \quad\text{on }\partial \Omega'.
		\end{equation}

		Now let $\xi^+ = \max(\xi, 0)$ and denote by $\Gamma_{\xi^+}$ the concave envelope of $\xi^+$ in $\Omega'$ (with respect to the Euclidean structure). Note that $\xi$ is semi-convex in $\Omega'$ and, by \eqref{105}, $\xi\leq 0$ on $\partial \Omega'$, and hence by the Alexandrov-Bakelman-Pucci estimate in  \cite[Lemma 3.5]{CC95}, the concave envelope $\Gamma_{\xi^+}$ is in $C^{1,1}(\Omega')$ and
		\begin{equation}\label{75}
		\int_{\{\xi=\Gamma_{\xi^+}\}} \det(-\partial^2 \Gamma_{\xi^+}) \geq \frac{1}{C(\Omega')} (\sup_{\Omega'}\xi)^n > 0,
		\end{equation}
		where $\partial^2\Gamma_{\xi^+}$ is the Hessian matrix of second-order partial derivatives of $\Gamma_{\xi^+}$ with respect to the Euclidean coordinate system. Note that positivity in \eqref{75} follows from the fact that $0<\eta = \sup_{\Omega'} \xi$, and it follows from \eqref{75} that the set $\{\xi = \Gamma_{\xi^+}\}$ has positive measure. 
		
		Now, since $\hat{u}_\ep$ and $u_2$ are punctually second order differentiable a.e., for each sufficiently small $\epsilon>0$ and $\eta>0$ as above, there exists some $y=y_{\ep,\eta}\in\{\xi=\Gamma_{\xi^+}\}$ such that $\hat{u}_\ep$ and $u_2$ are punctually second order differentiable at $y$. We will now see that
		\begin{align}
		&0 <  \xi(y)  = \hat{u}_\ep(y) - u_2(y) + \tau \leq \eta, \label{106} \\
		& |d \xi(y)|  = |d \hat{u}_\ep(y) - d u_2(y) | \leq C\eta, \text{ and} \label{110} \\
		&0 \geq \partial^2\xi(y) = \partial^2 \hat{u}_\ep(y) - \partial^2 u_2(y). \label{77}
		\end{align}
		Indeed, to see \eqref{106}, it is enough to observe that if $\xi(y) = 0$, then the concavity and nonnegativity of $\Gamma_{\xi^+}$ would imply that $\Gamma_{\xi^+}\equiv0$, contradicting \eqref{75}. \eqref{110} follows from \eqref{106} and concavity, and \eqref{77} holds since $\xi$ is concave on the set $\{\xi=\Gamma_{\xi^+}\}$.

		Now, the third property in our claim follows from \eqref{106} and the bound for $\tau$ in \eqref{114}, the fourth property is precisely \eqref{110}, and the first property is satisfied by construction. It remains to prove the second property in our claim. Let $\Gamma_{ij}^k$ denote the Christoffel symbols of $g_0$ with respect to the Euclidean coordinate system. By \eqref{77}, 
		\begin{align}
		[\nabla_{g_0}^2 u_2 (y)]_{ij} & = [\partial^2 u_2(y)]_{ij} - \Gamma_{ij}^k(y) \partial_k u_2(y)  \nonumber \\
		& \geq [\partial^2 \hat{u}_\ep(y)]_{ij} - \Gamma_{ij}^k(y) \partial_k u_2(y)   = [\nabla^2_{g_0} \hat{u}_\ep(y)]_{ij} + \Gamma_{ij}^k(y)\big(\partial_k \hat{u}_\ep(y) -\partial_k u_2(y)\big) \nonumber
		\end{align}
		in the sense of matrices, and by using \eqref{110} to estimate $\partial_k \hat{u}_\ep(y) -\partial_k u_2(y)$, we obtain
		\begin{equation}\label{111}
		\nabla_{g_0}^2 u_2 (y) \geq \nabla^2_{g_0} \hat{u}_\ep(y) - C_1\eta g_0
		\end{equation}
		for some constant $C_1$ independent of $\ep,\eta$.

		Let $y^* = x^*(\ep,y)$. By ellipticity, \eqref{76} and \eqref{111}, we therefore have
		\begin{equation*}
		\lambda\big(y^*, \hat{u}_\ep(y) + \ep^{-1}d_{g_0}(y,y^*)^2, d\hat{u}_\ep(y),\nabla_{g_0}^2u_2(y) + C_1\eta g_0\big)\in\overline{\Gamma}, 
		\end{equation*}
		which is precisely the second property in our claim. This completes the proof. 
	\end{proof}

	\begin{proof}[Proof of Lemma \ref{t18}]
		We amend the argument in the proof of Lemma \ref{t18'}, accounting for the lack of a modulus of continuity for $u_1$. In particular, \eqref{500} is in general not true for $u_1\in \operatorname{USC}(\overline{\Omega})$ (for an example of where this limit is not zero, see Section 2 of \cite{LNW18}). 
		
		We first observe that in the proof of Lemma \ref{t18'}, the continuity of $u_1$ is used only once, namely in passing from \eqref{78'} to \eqref{112} -- the rest of the argument remains valid when $u_1\in\operatorname{USC}(\overline{\Omega})$. In passing from \eqref{78'} to \eqref{112} in the case that $u_1\in C^0(\overline{\Omega})$, we used \eqref{500}. In the semicontinuous case, it suffices to show the weaker estimate
		\begin{equation}\label{306}
		\liminf_{\ep\rightarrow 0} \ep^{-1}d_{g_0}(y_{\ep,\eta}, y_{\ep,\eta}^*)^2 \leq \eta 
		\end{equation}
		in order to pass from \eqref{78'} to \eqref{112}, thereby completing the proof.
		
		The inequality in \eqref{306} was previously obtained in the proof of  \cite[Proposition 2.6]{LNW20a} -- we give the argument here for the convenience of the reader. We will use a weaker version of \eqref{500} for $u_1\in \operatorname{USC}(\overline{\Omega})$, namely that
		\begin{equation}
		\ep^{-1}d_{g_0}(x,x^*)^2\leq C\quad\text{for all }x\in\Omega'
		\end{equation}
		(see \cite{CC95, LNW18}). So we may suppose for some fixed $\eta>0$ and some sequence $\epsilon_m\rightarrow 0$ that
		\begin{equation}\label{307}
		\ep_m^{-1}d_{g_0}(y_m, y_m^*)^2 \rightarrow d,
		\end{equation}
		where $y_m \defeq y_{\ep_m, \eta}$. To prove \eqref{306}, we need to show that $d\leq \eta$.

		Let $\tau_m = \tau(\ep_m,\eta)$ be defined as in \eqref{304}. By \eqref{114}, after restricting to a further subsequence if necessary, we have $\tau_m\rightarrow \tau_0\in(-\infty,\eta]$, and also $y_m\rightarrow y_0\in\overline{\Omega'}$. The latter limit, combined with \eqref{307}, also implies $y_m^*\rightarrow y_0$, and hence by upper semicontinuity of $u_1$, 
		\begin{equation}\label{308}
		\limsup_{m\rightarrow \infty} u_1(y_m^*) \leq u_1(y_0). 
		\end{equation}
		Therefore,
		\begin{align}\label{310}
		0 & \leq \limsup_{m\rightarrow \infty} \ep_m^{-1}d_{g_0}(y_m, y_m^*)^2  \stackrel{\eqref{71}}{=} \limsup_{m\rightarrow \infty}\big(u_1(y_m^*) - \hat{u}_{\ep_m}(y_m)\big) \nonumber \\
		& \leftstackrel{\eqref{106}}{\leq} \limsup_{m\rightarrow \infty} \big(u_1(y_m^*) - u_2(y_m) + \tau_m \big) \leftstackrel{\eqref{308}}{\leq} u_1(y_0) - u_2(y_0) + \eta. 
		\end{align}
		But 
		\begin{equation}\label{309}
		u_1(y_0) - u_2(y_0) = \lim_{m\rightarrow\infty} \big(\hat{u}_{\ep_m}(y_0) - u_2(y_0)\big) \leq \lim_{m\rightarrow\infty} \sup_{\Omega'} (\hat{u}_{\ep_m} - u_2)= \sup_{\Omega'} (u_1 - u_2) = 0,
		\end{equation}
		and substituting \eqref{309} into \eqref{310} we see that $d\leq \eta$, as required. 
	\end{proof}
	
	\subsection{Proof of Theorem \ref{t20}}
	
	With Lemma \ref{t18} now established, we prove Theorem \ref{t20}:

	\begin{proof}[Proof of Theorem \ref{t20}]
		We follow \cite{LNW20a} (see also \cite{CLN13, LNW18}), arguing by contradiction. If the conclusion is false, then we can find a closed ball $\bar{B}\subset\Omega$ of some radius $R>0$ for which there exists $\hat{x}\in\partial B$ with
		\begin{equation*}
		u_1 < u_2 \mathrm{~in~}\bar{B}\backslash\{\hat{x}\}\quad \mathrm{and}\quad u_1(\hat{x}) = u_2(\hat{x}). 
		\end{equation*}
		Taking $B$ smaller if necessary, we may assume it is contained inside a normal coordinate chart about its centre, and for the remainder of the proof we implicitly identify $B$ with its image under this chart, assumed to be centred at the origin in $\mathbb{R}^n$.
		
		We will deform $u_2$ into a strict supersolution $\tilde{u}$ (in the sense of \eqref{12}) in some open Euclidean ball $A$ (contained in the image of the aforementioned chart) centred at $\hat{x}$ such that $ u_1 < \tilde{u}$ on $\partial A$ and $\inf_A (\tilde{u}-u_1)=0$. These imply that $\tilde{u} - u_1$ attains its infimum (equal to zero) in $A$, which contradicts the conclusion of Lemma \ref{t18} that $u_1 < \tilde{u}$ in $A$.

		We construct $\tilde{u}$ as follows. Let $\alpha\gg1$ be a constant, and define 
		\begin{align*}
		E(x) & = e^{-\alpha|x|^2},\nonumber \\
		h(x) & = e^{-\alpha|x|^2} - e^{-\alpha R^2},\nonumber \\
		\xi(x) & = \cos (\alpha^{1/2}(x_1-\hat{x}_1)),
		\end{align*}
		where $|x|^2 = x_1^2 + \dots + x_n^2$. Also let $\mu>0$ and $\nu \geq 0$  be constants, and define the following perturbation of $u_2$:
		\begin{equation*}
		\tilde{u}(x) = \tilde{u}_{\mu,\nu}(x) \defeq u_2(x) - \mu\big(h(x)-\nu\big)\xi(x).
		\end{equation*}

		We start by taking a ball $A$ centred at $\hat{x}$ of sufficiently small radius $R_A$ such that $A$ is contained in the image of the aforementioned chart and $|x|$ is uniformly bounded away from 0 on $A$. For reasons that will become clear later, we also assume in what follows that the constants $R_A, \alpha$ and $\mu$ are chosen such that
		\begin{equation}\label{303}
		R_A < \alpha^{-1} \quad\text{and} \quad \mu\alpha E(x) < 1 \text{ for }x\in A. 
		\end{equation}
		Note that the first of these conditions implies that $\xi>\frac{1}{2}$ on $A$ and $|h(x)| = O(E(x))$; here the implicit constant is universal, although throughout the proof we allow our implicit constants to depend on $\|u\|_{C^1(\overline{A})}$.

		Recall that $\lambda(g_u^{-1}A_{g_u})\in\Gamma^\tau$ if and only if $\lambda^{\tau}(g_u^{-1}A_{g_u})\in\Gamma$, and by \eqref{501} we have
		\begin{equation*}
		\lambda^\tau(g_{\tilde{u}}^{-1}A_{g_{\tilde{u}}}) = \lambda\bigg(g_{\tilde{u}}^{-1}\bigg(\tau \nabla_{g_0}^2 {\tilde{u}} + (1-\tau)\Delta_{g_0} \tilde{u} \,g_0 - c_{n,\tau}{|d\tilde{u}|}_{g_0}^2\,g_0 + \tau d\tilde{u}\otimes d\tilde{u} + A_{g_0,\tau }\bigg)\bigg)
		\end{equation*}
		where $c_{n,\tau} = \frac{1}{2}(n-2-(n-3)\tau)$ and 
		\begin{equation}\label{42}
		A_{g_0,\tau} \defeq \tau A_{g_0} + (1-\tau)\sigma_1(g_0^{-1}A_{g_0})g_0.
		\end{equation}
		Likewise, we denote
		\begin{align}\label{t87}
		A_{g_{\tilde{u}},\tau} & \defeq \tau A_{g_{\tilde{u}}} + (1-\tau)\sigma_1(g_{\tilde{u}}^{-1}A_{g_{\tilde{u}}})g_{\tilde{u}} \nonumber \\
		& = \tau \nabla_{g_0}^2 {\tilde{u}} + (1-\tau)\Delta_{g_0} \tilde{u} \,g_0 - c_{n,\tau}{|d\tilde{u}|}_{g_0}^2\,g_0 + \tau d\tilde{u}\otimes d\tilde{u} + A_{g_0,\tau}. 
		\end{align} 
		(Note that we write $\tau\in[0,1]$ as a subscript in \eqref{42} and \eqref{t87}, to avoid confusion with quantities in \eqref{300} and \eqref{301}).

		For the remainder of the proof, computations are carried out at points $x\in A$ where $u_2$ is punctually second order differentiable; since $u_2 \in C^{1,1}_{\operatorname{loc}}(\Omega)$, the set of such points has full measure in $A$. We make the following claim: \medskip
		
		\noindent \textbf{Claim:}  \vspace*{-7mm}\begin{align}\label{t39}
		A_{g_{\tilde{u}},\tau}(x) & =  A_{g_{u_2},\tau}(x) - 4\tau \mu \alpha^2 E(x)\xi(x) x\otimes x - \tau\mu\alpha\nu \xi(x) e_1\otimes e_1  \nonumber \\
		& \quad  - 4(1-\tau)\mu\alpha^2 E(x)\xi(x) |x|^2g_0(x) - (1-\tau)\mu\alpha \nu\xi(x) g_0(x)  \nonumber \\
		& \quad + O\big(\mu\alpha^{3/2} E(x) + \mu\alpha^{1/2}\nu  + \mu^2\alpha\nu^2\big).
		\end{align}

		Assuming \eqref{t39} for now, we complete the proof of Theorem \ref{t20}. First observe that if $\alpha$ is taken sufficiently large, then the $O\big(\mu\alpha^{3/2} E(x)\big)$ terms in \eqref{t39} are absorbed by the strictly negative term $- 4(1-\tau)\mu\alpha^2 E(x)\xi(x) |x|^2g_0(x)$, and the $O(\mu\alpha^{1/2}\nu)$ terms in \eqref{t39} are absorbed by the strictly negative term $- (1-\tau)\mu\alpha \nu\xi(x) g_0(x)$. For $\mu\nu \ll 1$, the $O(\mu^2 \alpha \nu^2)$ terms in \eqref{t39} are also absorbed by the strictly negative term $- (1-\tau)\mu\alpha \nu\xi(x) g_0(x)$. 
		
		For $R_A, \alpha, \mu$ and $\nu$ satisfying the above constraints, after raising an index using $g_{\tilde{u}}^{-1}$, we therefore obtain for some positive function $p(x)=p_{R_A, \alpha,\mu,\nu}(x) \geq C^{-1} > 0$ the following inequality of $(1,1)$-tensors:
		\begin{align}
		g_{\tilde{u}}^{-1}A_{g_{\tilde{u}},\tau}(x)  \leq g_{\tilde{u}}^{-1}A_{g_{u_2},\tau}(x)  - p(x)g_{\tilde{u}}^{-1}g_0(x) = g_{\tilde{u}}^{-1}A_{g_{u_2},\tau}(x)  - p(x)e^{2\tilde{u}}I. \nonumber 
		\end{align}
		Therefore 
		\begin{equation*}
		g_{\tilde{u}}^{-1}A_{g_{\tilde{u},\tau}}(x) + p(x) e^{2\tilde{u}}I \leq g_{\tilde{u}}^{-1}A_{g_{u_2},\tau}(x),
		\end{equation*}
		and it follows from boundedness of $p$ away from zero that for some constant $\delta>0$,
		\begin{equation}\label{82}
		g_{\tilde{u}}^{-1}A_{g_{\tilde{u},\tau}}(x) + \delta I < g_{\tilde{u}}^{-1}A_{g_{u_2},\tau}(x). 
		\end{equation}
		But by \eqref{51} and ellipticity, the inequality \eqref{82} implies 
		\begin{equation}
		\lambda\big(g_{\tilde{u}}^{-1}A_{g_{\tilde{u},\tau}} + \delta I \big)\in \mathbb{R}^n\backslash\Gamma.
		\end{equation}

		To obtain a contradiction to Lemma 2.5, we only need to choose $\nu$ so that
		\begin{equation}\label{502}
		u_1 < \tilde{u}_{\mu,\nu} \mathrm{~on~}\partial A\quad\text{and}\quad \inf_A(\tilde{u}_{\mu,\nu}- u_1) = 0.
		\end{equation}
		Denote $\nu_0 = \sup_A h \geq \sup_{A\cap B} h >0$. Then for sufficiently small $\mu>0$, it is easy to see that $u_1 < \tilde{u}_{\mu,\nu}$ on $\partial A$ for all $0\leq \nu \leq \nu_0$. Moreover $\inf_A(\tilde{u}_{\mu,0} - u_1) \leq 0 \leq \inf_A (\tilde{u}_{\mu,\nu_0} - u_1)$, so a value of $\nu\in[0,\nu_0]$ can be chosen so that $\inf_A(\tilde{u}_{\mu,\nu}- u_1) = 0$.

		To complete the proof of Theorem \ref{t20}, it remains to prove the above claim. \medskip
		
		\noindent\textit{Proof of the claim.} Calculating $\partial_i\tilde{u}$ and $\partial_i\partial_j \tilde{u}$ explicitly, we see
		\begin{align}
		\partial_i \tilde{u}(x) - \partial_i u_2(x) & =  2\mu \alpha E(x)\xi(x) x_i + \mu \alpha^{1/2} (h(x) - \nu)\sin (\alpha^{1/2}(x_1-\hat{x}_1))\delta_{i1}, \label{t32}
		\\ 
		\partial_i\partial_j \tilde{u}(x) - \partial_i\partial_j u_2(x) & =  2\mu \alpha E(x)\xi(x) \big(-2\alpha x_i x_j+ \delta_{ij}\big) \nonumber
		\\
		& \quad  - 4\mu \alpha^{3/2} E(x) \sin (\alpha^{1/2}(x_1-\hat{x}_1))\delta_{i1}x_j \nonumber 
		\\
		& \quad + \mu\alpha (h(x)-\nu)\xi(x)\delta_{i1}\delta_{j1}. \label{t30}
		\end{align}

		\noindent It follows that
		\begin{align}
		(\nabla_{g_0}^2)_{ij}\tilde{u}(x) - (\nabla_{g_0}^2)_{ij}u_2&(x)= \partial_i\partial_j \tilde{u}(x) - \partial_i\partial_j u_2(x)  - \Gamma_{ij}^k(x)\big( \partial_k \tilde{u}(x) -  \partial_k u_2(x)\big) \nonumber \\
		& \,\stackrel{\eqref{t32}}{=}\partial_i\partial_j \tilde{u}(x) - \partial_i\partial_j u_2(x) -  \Gamma_{ij}^k(x)\Big(2\mu \alpha E(x)\xi(x) x_k\nonumber \\
		& \qquad \,\,  + \mu \alpha^{1/2} (h(x) - \nu)\sin (\alpha^{1/2}(x_1-\hat{x}_1))\delta_{k1}\Big) \nonumber \\
		& = \partial_i\partial_j \tilde{u}(x) - \partial_i\partial_j u_2(x)  + O\big(\mu\alpha E(x) + \mu\alpha^{1/2}\nu\big), \nonumber 
		\end{align}
		where $\Gamma_{ij}^k$ are the Christoffel symbols of the metric $g_0$, and we therefore obtain the identity
		\begin{align}\label{t37}
		\tau\nabla_{g_0}^2 \tilde{u}(x) + (1-\tau)\Delta_{g_0} \tilde{u}\, g_0(x)  & =  \tau\nabla_{g_0}^2 u_2(x) + (1-\tau)\Delta_{g_0} u_2(x) \,g_0(x) \nonumber \\
		& \quad - 4\tau \mu \alpha^2 E(x)\xi(x) x\otimes x - \tau\mu\alpha\nu \xi(x) e_1\otimes e_1  \nonumber \\
		& \quad - 4(1-\tau)\mu\alpha^2 E(x)\xi(x) |x|^2g_0(x) - (1-\tau)\mu\alpha \nu\xi(x) g_0(x)  \nonumber \\
		& \quad + O\big(\mu\alpha^{3/2}E(x) + \mu \alpha^{1/2}\nu\big). 
		\end{align}
		
		Next we calculate $- c_{n,\tau}{|d \tilde{u}|}_{g_0}^2\,g_0 + \tau d\tilde{u}\otimes d\tilde{u}$. Expanding out $|d\tilde{u}|_{g_0}^2(x)$ using \eqref{t32}, and using \eqref{303} to assert $\mu^2\alpha^2 E(x)^2 \leq \mu\alpha E(x)$ and $\mu^2\alpha^{3/2}E(x)\nu \leq \mu\alpha^{1/2}\nu$, we see that 
		\begin{align}\label{t38}
		|d\tilde{u}|_{g_0}^2(x) - |du_2|_{g_0}^2(x) & = O\big(\mu\alpha E(x) + \mu\alpha^{1/2}\nu  + \mu^2\alpha\nu^2\big) 
		\end{align}
		and likewise
		\begin{align}\label{t38'}
		(d\tilde{u}\otimes d\tilde{u} - du_2\otimes du_2)(x) & = O\big(\mu\alpha E(x) + \mu\alpha^{1/2}\nu  + \mu^2\alpha\nu^2\big).
		\end{align}

		Combining \eqref{t37}, \eqref{t38} and \eqref{t38'} we obtain \eqref{t39}, completing the proof of the claim and therefore the proof of Theorem \ref{t20}.  \end{proof}

	\section{Proof of Theorem \ref{e} and related results}\label{est}
	
	In this section we prove Theorem \ref{e}, from which Theorem \ref{f'} follows immediately. As discussed in the introduction, a subtlety in the study of the nonlinear eigenvalue problem 
	\begin{equation}\label{5'}
	f^\tau(\lambda(g_0^{-1}A_{g_{u_\tau}}))= \mu_\tau, \quad \lambda(g_0^{-1}A_{g_{u_\tau}})\in\Gamma^\tau
	\end{equation}
	is the non-existence of solutions for all but one value of $\mu_\tau$. In addition, when there exists a solution to \eqref{5'}, it is clear that the solution is not unique. As a means for establishing Theorem \ref{e}, we first consider a class of equations for which we prove both existence and uniqueness. More precisely, we consider 
	\begin{equation}\label{2'}
	f^\tau\big(\lambda(g_0^{-1}A_{g_{u_\tau}})\big)= h(x,u_\tau), \quad \lambda(g_0^{-1}A_{g_{u_\tau}})\in\Gamma^\tau,
	\end{equation}
	where $h=h(x,z):M^n\times \mathbb{R}\rightarrow(0,\infty)$ is any smooth positive function which is strictly proper and satisfies mild growth conditions as $z\rightarrow\pm \infty$: \medskip 
	
	\begin{enumerate}
		\item[(C1)]  $\frac{\partial h}{\partial z}>0$ on $M^n\times \mathbb{R}$, \medskip
		
		\item[(C2)]  $\displaystyle \limsup_{z\rightarrow -\infty} \sup_{x\in M} h(x,z) = 0$ and $\displaystyle  \limsup_{z\rightarrow -\infty}\sup_{x\in M} \big(|\nabla h(x,z)| + |\nabla^2 h(x,z)|\big)< \infty$, \medskip
		
		\item[(C3)]  $\displaystyle \liminf_{z\rightarrow+\infty}\inf_{x\in M} h(x,z) = +\infty$. \medskip
	\end{enumerate}
	
	\noindent The properness condition \hyperref[C1]{(C1)} has been widely used in the context of nonlinear elliptic equations, and is used in our argument to obtain uniqueness of solutions to \eqref{2'}. The growth conditions \hyperref[C2]{(C2)} and \hyperref[C3]{(C3)} are used in our argument to obtain existence of solutions to \eqref{2'}. An example of a function satisfying \hyperref[C1]{(C1)}-\hyperref[C3]{(C3)} is $h(x,z) = \tilde{h}(x) e^{\beta z}$ for $\beta>0$, where $\tilde{h}>0$ is any smooth positive function. 
	
	Our existence and uniqueness result for \eqref{2'} is as follows:
	
	\begin{thm}\label{a}
		Let $(M^n,g_0)$ be a smooth, closed Riemannian manifold of dimension $n\geq 3$, let $h=h(x,z)$ be a smooth positive function satisfying \hyperref[C2]{(C2)} and \hyperref[C3]{(C3)}, and suppose $(f,\Gamma)$ satisfies \eqref{A}, \eqref{B}, \eqref{C} and \eqref{D} with $\Gamma \not= \Gamma_1^+$. Suppose there exists a metric $g_{\hat{u}} = e^{-2\hat{u}}g_0$, $\hat{u}\in C^0(M^n)$, satisfying \eqref{40}. Then the following statements hold: \medskip 
		\begin{enumerate}
			\vspace*{-4mm}\item[1.] For given $\tau\in(0,1)$, there exists a smooth solution $g_{u_\tau}=e^{-2u_\tau}g_0$ to \eqref{2'} if and only if $\hat{u}$ is not a smooth solution to $\operatorname{Ric}_{g_{\hat{u}}} \equiv 0$ on $M^n$. Moreover, for all $0<\delta<T<1$ and $\alpha\in (0,1)$, there exists a constant $C_1>0$ depending only on $n,g_0,\delta, T, \alpha$ and $h$ such that solutions to \eqref{2'} with $\tau\in[\delta,T]$ satisfy
			\begin{equation}\label{14a}
			\| u_\tau \|_{C^{4,\alpha}(M^n,g_0)} \leq C_1. 
			\end{equation}

			\item[2.] There exists a smooth solution $g_{u_\tau}=e^{-2u_\tau}g_0$ to \eqref{2'} for all $\tau\in(0,1]$ if and only if there is no $C^{1,1}$ metric $g$ conformal to $g_0$ satisfying $\lambda(g^{-1}A_g)\in\partial\Gamma$ a.e.~on $M^n$. Moreover, for all $0<\delta < 1$ and $\alpha\in(0,1)$, there exists a constant $C_2>0$ depending only on $n, g_0, \delta, \alpha$ and $h$ such that solutions to \eqref{2'} with $\tau\in[\delta,1]$ satisfy 
			\begin{equation}\label{14a'}
			\| u_\tau \|_{C^{4,\alpha}(M^n,g_0)} \leq C_2. 
			\end{equation}
			\end{enumerate}
		If $h$ also satisfies (C1), then solutions to \eqref{2'} are unique. 
	\end{thm}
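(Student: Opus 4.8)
The plan is to deduce uniqueness from the maximum principle together with the strict properness \hyperref[C1]{(C1)}; to obtain existence via a Leray--Schauder degree argument whose crucial input is a family of \emph{a priori} $C^{4,\alpha}$ estimates uniform in $\tau$; and to read off the ``if and only if'' characterisations in statements 1 and 2 from precisely \emph{when} those estimates can fail. I would first dispose of uniqueness. If $u_\tau,\check u_\tau$ both solve \eqref{2'}, then at a maximum point $x_0$ of $u_\tau-\check u_\tau$ one has $\nabla_{g_0}(u_\tau-\check u_\tau)(x_0)=0$ and $\nabla_{g_0}^2(u_\tau-\check u_\tau)(x_0)\le 0$, so by \eqref{501} $A_{g_{u_\tau}}(x_0)\le A_{g_{\check u_\tau}}(x_0)$ as $g_0$-symmetric tensors; since $M\mapsto f^\tau(\lambda(g_0^{-1}M))$ is nondecreasing (by \eqref{D} together with $\Gamma^\tau$ being a monotone cone), \eqref{2'} gives $h(x_0,u_\tau(x_0))\le h(x_0,\check u_\tau(x_0))$, whence $u_\tau(x_0)\le\check u_\tau(x_0)$ by \hyperref[C1]{(C1)}. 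Thus $u_\tau\le\check u_\tau$ on $M^n$, and the reverse inequality follows by symmetry.

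\textbf{Existence and the two characterisations.} Recall from Section \ref{100} that \eqref{40} and $\Gamma\ne\Gamma_1^+$ force either (i) $\hat u$ smooth with $\operatorname{Ric}_{g_{\hat u}}\equiv 0$, or (ii) $Y(M^n,[g_0])>0$, and these are mutually exclusive; so ``$\hat u$ not smooth Ricci-flat'' $\iff$ (ii). In case (i) one has $A_{g_{\hat u}}\equiv 0$, so $\hat u$ is a smooth (hence viscosity) supersolution of $\lambda(g^{-1}A_g)\in\partial\Gamma^\tau$; if $u_\tau$ solved \eqref{2'}, then adding a constant so that $u_\tau\le\hat u+c$ with equality somewhere, Theorem \ref{t20} gives $u_\tau\equiv\hat u+c$, hence (invariance of the Schouten tensor under constant rescalings) $\lambda(g_0^{-1}A_{g_{u_\tau}})\equiv 0\notin\Gamma^\tau$, contradicting \eqref{2'}; this is the ``only if'' in statement 1. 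Similarly, if some $C^{1,1}$ metric $g_w$ satisfies $\lambda(g_0^{-1}A_{g_w})\in\partial\Gamma$ a.e., then ($g_w$ is a viscosity supersolution of $\lambda(g^{-1}A_g)\in\partial\Gamma$ by Remark \ref{84}) no solution $u_1$ of \eqref{2'} at $\tau=1$ can exist: with $c=\sup_{M^n}(u_1-w)$, the function $u_1-c$ touches $w$ from below at a point where the supersolution property forces $\lambda(g_0^{-1}A_{g_{u_1}})\in\mathbb R^n\setminus\Gamma$, contradicting $\lambda(g_0^{-1}A_{g_{u_1}})\in\Gamma$; this is the ``only if'' in statement 2. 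Conversely, in case (ii) I would produce solutions to \eqref{2'} via Li's degree theory for fully nonlinear elliptic operators, deforming $h$ within the class \hyperref[C2]{(C2)}--\hyperref[C3]{(C3)} to a model nonlinearity (such as $\varepsilon e^{\beta z}$) possessing a unique, nondegenerate solution and hence nonzero degree; homotopy invariance then yields a solution for $h$, provided the uniform \emph{a priori} estimates below hold along the deformation.

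\textbf{The \emph{a priori} estimates.} The upper $C^0$ bound is standard: at a maximum point $x_0$ of $u_\tau$, $A_{g_{u_\tau}}(x_0)\le A_{g_0}(x_0)\le Cg_0$, so $h(x_0,u_\tau(x_0))=f^\tau(\lambda(g_0^{-1}A_{g_{u_\tau}}))(x_0)\le f^\tau(Ce)=C(\tau+n(1-\tau))f(e)\le C_0$ uniformly in $\tau\in(0,1]$, and \hyperref[C3]{(C3)} bounds $\max_{M^n}u_\tau$ uniformly. The lower $C^0$ bound is the crux, and I would argue by contradiction: suppose solutions $u_i:=u_{\tau_i}$ of \eqref{2'} exist with $\tau_i\to\tau_*$ and $m_i:=\min_{M^n}u_i\to-\infty$. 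Put $v_i:=u_i-m_i\ge 0$; then $A_{g_{v_i}}=A_{g_{u_i}}$ and $f^{\tau_i}(\lambda(g_0^{-1}A_{g_{v_i}}))=h(\cdot,v_i+m_i)$, with right-hand side bounded in $C^0(M^n)$ by the upper bound and \hyperref[C1]{(C1)}. The Krylov--Safonov/Harnack inequality for the uniformly elliptic Pucci operators bounding admissible solutions, together with $\min_{M^n}v_i=0$, gives a uniform $C^0$ bound on $v_i$; the interior gradient and second-derivative estimates for these geometric equations (Guan--Wang, Li--Li, Gursky--Viaclovsky, Sheng--Trudinger--Wang, Li--Nguyen) then yield uniform $C^{1,1}$ bounds, so along a subsequence $v_i\to\tilde u$ in $C^{1,\alpha}(M^n)$ with $\tilde u\in C^{1,1}(M^n)$. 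Since $u_i=v_i+m_i\to-\infty$ uniformly, \hyperref[C2]{(C2)} gives $h(\cdot,u_i)\to 0$, and stability of viscosity sub/supersolutions under uniform convergence of solutions and operators yields $\lambda(g_0^{-1}A_{g_{\tilde u}})\in\partial\Gamma^{\tau_*}$ in the viscosity, hence (Remark \ref{84}) a.e.\ sense. Now $g_{\hat u}$ is a viscosity subsolution of $\lambda(g^{-1}A_g)\in\partial\Gamma^{\tau_*}$ (as $\overline\Gamma\subseteq\overline{\Gamma^{\tau_*}}$) and $\tilde u$ is a $C^{1,1}$ supersolution of the same equation, so with $c=\inf_{M^n}(\tilde u-\hat u)$ Theorem \ref{t20} forces $\tilde u\equiv\hat u+c$; thus $\hat u\in C^{1,1}$ with $\lambda(g_0^{-1}A_{g_{\hat u}})\in\overline\Gamma\cap\partial\Gamma^{\tau_*}$ a.e., which by Lemma \ref{506} forces $A_{g_{\hat u}}\equiv 0$, hence (elliptic regularity) $\hat u$ smooth with $\operatorname{Ric}_{g_{\hat u}}\equiv 0$. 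When $\tau_*<1$ this contradicts (ii), so the lower bound holds uniformly on compact subsets of $(0,1)$; combined with the higher-order estimates and — since on genuine solutions $h(x,u_\tau)$ is bounded below by a positive constant, giving a posteriori uniform ellipticity — Evans--Krylov plus a Schauder bootstrap, one obtains \eqref{14a} in $C^{4,\alpha}$ (indeed $C^\infty$) and existence for every $\tau\in(0,1)$, completing statement 1. For statement 2 the same argument runs with $\tau_*\le 1$: if $\tau_*=1$ the limiting metric satisfies $\lambda(g_0^{-1}A_{g_{\tilde u}})\in\partial\Gamma$ a.e., so the lower bound on $[\delta,1]$ can fail only if such a $C^{1,1}$ metric exists; together with the ``only if'' above this gives the equivalence in statement 2 and the uniform bound \eqref{14a'}.

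\textbf{Main obstacle.} The step I expect to be the hardest is obtaining \emph{global} uniform interior estimates for the translated solutions $v_i$, so that the limit metric $\tilde g=e^{-2\tilde u}g_0$ is $C^{1,1}$ and satisfies $\lambda(\tilde g^{-1}A_{\tilde g})\in\partial\Gamma^{\tau_*}$ a.e.\ on \emph{all} of $M^n$ and not merely near the vanishing point of $v_i$. This is exactly where uniform ellipticity for $\tau<1$ is indispensable, and where the degeneration as $\tau\to 1$ relevant to statement 2 must be absorbed into the $\sigma_k$-type second-derivative estimates. The accompanying difficulty is the identification $\tilde u\equiv\hat u+c$: this is what the strong comparison principle of Section \ref{scp} (Theorem \ref{t20}) is built to provide, and it must be fed into the convex-geometric input of Lemma \ref{506}, which converts $\lambda(g_0^{-1}A_{g_{\hat u}})\in\overline\Gamma\cap\partial\Gamma^{\tau_*}$ into $A_{g_{\hat u}}\equiv 0$ and thereby closes the contradiction against $Y(M^n,[g_0])>0$.
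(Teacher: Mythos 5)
Your proposal follows the same overall architecture as the paper: the dichotomy from Proposition~\ref{F}, an upper $C^0$ bound at a maximum point, a blow-down/contradiction argument for the lower $C^0$ bound which feeds into the strong comparison principle (Theorem~\ref{t20}) and the cone geometry of Lemma~\ref{506}, and Li's degree theory for existence. The min-normalisation $v_i = u_i - \min u_i$ in place of the paper's mean-normalisation, and the direct maximum-principle argument for uniqueness (rather than citing the strong comparison principle as the paper does), are cosmetic and both correct. Two steps, however, contain genuine gaps.

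\textbf{The Harnack step.} You claim that ``the Krylov--Safonov/Harnack inequality for the uniformly elliptic Pucci operators'' bounds the oscillation of $v_i$, and only \emph{then} invoke the gradient and second-derivative estimates of Guan--Wang, Li--Li, Chen, etc. This order is wrong, and the Harnack claim does not hold as stated: writing \eqref{2'} as $F(\nabla^2 u, \nabla u, u, x)=0$ via \eqref{501}, the dependence on $\nabla u$ is \emph{quadratic} (through $-\tfrac12|du|^2 g_0 + du\otimes du$) and the ellipticity of $f^\tau$ at $\lambda(g_0^{-1}A_{g_u})$ has no uniform scale bound --- two features that each separately preclude a direct application of Krylov--Safonov. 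The correct and standard route (and the one the paper takes) is to observe that the local gradient and Hessian estimates of Chen \cite{Che05} give $|\nabla_{g_0} u_\tau|_{g_0}^2 + |\nabla_{g_0}^2 u_\tau|_{g_0}\le C$ depending only on the $C^0$ \emph{upper} bound (through $h$, $\nabla h$, $\nabla^2 h$ along the solution, which (C2) controls). The oscillation bound $0\le v_i \le C$ is then immediate from $\min v_i=0$ and the gradient bound, with no Harnack needed. Everything downstream of this (limit metric $\tilde u\in C^{1,1}$, stability via Proposition~\ref{520}, identification $\tilde u\equiv\hat u+c$, Lemma~\ref{506}) is correct.

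\textbf{The degree argument.} You propose to fix $\tau$ and deform $h$ to a model nonlinearity such as $\varepsilon e^{\beta z}$ which you assert has a unique, nondegenerate solution and hence nonzero degree. For $\tau$ near $1$ this starting point is not available: there is no reason $g_0$, or any explicit metric, should satisfy $\lambda(g_0^{-1}A_{g_0})\in\Gamma^\tau$, and you have given no mechanism by which a solution to the model equation at fixed $\tau$ is known to exist in the first place. The paper sidesteps this by deforming $\tau$ and the right-hand side \emph{simultaneously} along the path \eqref{ee1'}: at $\tau=\delta$ small, $R_{g_0}>0$ forces $\lambda^\delta(g_0^{-1}A_{g_0})\in\Gamma_n^+$, so $h_0:=f^\delta(g_0^{-1}A_{g_0})$ is a valid right-hand side and $u\equiv 0$ is the unique solution, whose linearisation is invertible with negative zeroth-order coefficient by \eqref{43}; homotopy invariance then carries this degree to $\tau=T$ with the prescribed $h$. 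Without some analogous device giving a computable base degree, your deformation does not close; with it, your scheme becomes essentially the paper's.
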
 
	
	Note that Theorem \ref{f'} may be viewed as a consequence of either Theorem \ref{e} or Theorem \ref{a}.

	We begin in Section \ref{100} by proving that, if $\Gamma\not=\Gamma_1^+$ and there exists a continuous metric conformal to $g_0$ satisfying \eqref{40}, then either $Y(M^n,[g_0])>0$ or $g_{\hat{u}}$ is smooth with $\operatorname{Ric}_{g_{\hat{u}}}\equiv 0$ on $M^n$. In Section \ref{101}, we obtain the \textit{a priori} estimates in the first statement of Theorem \ref{a}. As alluded to in the introduction, this is the point at which we make use of the strong comparison principle obtained in Section \ref{scp}. In Section \ref{102} we complete the proof of Theorem \ref{a} using a degree argument. In Section \ref{103}, we prove Theorem \ref{e} by appealing to Theorem \ref{a} in the special case $h(x,z) = e^{\beta z}$ ($\beta>0$) and applying a limiting argument as $\beta\rightarrow 0^+$. As a by-product of our methods for proving Theorem \ref{a}, we present in Section \ref{p4} a result of Kazdan-Warner type (which will be used later in Section \ref{g2}).

	\subsection{A Yamabe-positive/Ricci-flat dichotomy}\label{100}
	
	An important step in the proof of Theorem \ref{e} is to show that \eqref{40} and the assumption $\Gamma\not = \Gamma_1^+$ imply that either $Y(M^n,[g_0])>0$ or $g_{\hat{u}}$ is smooth with $\operatorname{Ric}_{g_{\hat{u}}}\equiv 0$ on $M^n$:
	
	\begin{prop}\label{F}
		Let $(M^n,g_0)$ be a smooth, closed Riemannian manifold of dimension $n\geq 3$ and suppose $\Gamma$ satisfies \eqref{A} and \eqref{B} with $\Gamma \not= \Gamma_1^+$. Suppose there exists a metric $g_{\hat{u}}= e^{-2\hat{u}}g_0$, $\hat{u}\in C^0(M^n)$, satisfying \eqref{40}. Then either $Y(M^n,[g_0])>0$ or $g_{\hat{u}}$ is smooth with $\operatorname{Ric}_{g_{\hat{u}}} \equiv 0$ on $M^n$.
	\end{prop}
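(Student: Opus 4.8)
The plan is to reduce Proposition~\ref{F} to the classical scalar curvature dichotomy via the inclusion $\Gamma\subseteq\Gamma_1^+$, then to use the strong comparison principle of Section~\ref{scp} to force regularity of $\hat u$, and finally to upgrade scalar flatness to Ricci flatness using a structural fact about $\Gamma$. To begin, since $\Gamma\subseteq\Gamma_1^+$ by \eqref{B} and $\sigma_1(\lambda(g^{-1}A_g))=\tfrac{R_g}{2(n-1)}$, the hypothesis \eqref{40} implies in particular that $g_{\hat u}$ is a viscosity subsolution of $\lambda(g^{-1}A_g)\in\overline{\Gamma_1^+}$, i.e.\ $R_{g_{\hat u}}\geq 0$ in the viscosity sense on $M^n$. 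Recalling that $Y(M^n,[g_0])>0$ exactly when the first eigenvalue $\lambda_1$ of the conformal Laplacian $L_{g_0}=-\tfrac{4(n-1)}{n-2}\Delta_{g_0}+R_{g_0}$ is positive, we may assume $\lambda_1\leq 0$ and must show $g_{\hat u}$ is smooth with $\operatorname{Ric}_{g_{\hat u}}\equiv 0$.

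Let $\varphi_1>0$ be a smooth first eigenfunction and set $g_1=\varphi_1^{4/(n-2)}g_0=:e^{-2u_2}g_0$, so $R_{g_1}=\lambda_1\varphi_1^{-4/(n-2)}\leq 0$ and hence $\lambda(g_1^{-1}A_{g_1})\in\mathbb{R}^n\backslash\Gamma_1^+$ everywhere on $M^n$. Because adding a constant to $\hat u$ leaves $\lambda(g_{\hat u}^{-1}A_{g_{\hat u}})$ unchanged up to multiplication by a positive constant and $\overline{\Gamma}$ is a cone, \eqref{40} is preserved under such translations; replacing $\hat u$ by $\hat u+\min_{M^n}(u_2-\hat u)$ we may therefore assume $\hat u\leq u_2$ on $M^n$ with equality at some point. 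Now apply the strong comparison principle (Theorem~\ref{t20}$'$, which applies to $\Gamma_1^+$ since $(1,0,\dots,0)\in\Gamma_1^+$) with $u_1=\hat u$ a viscosity subsolution of $\lambda(g^{-1}A_g)\in\overline{\Gamma_1^+}$ and $u_2\in C^\infty(M^n)$ satisfying $\lambda(g_{u_2}^{-1}A_{g_{u_2}})\in\mathbb{R}^n\backslash\Gamma_1^+$: since $\hat u$ and $u_2$ agree at a point, we conclude $\hat u\equiv u_2$. In particular $\hat u$ is smooth and $g_{\hat u}$ is a positive constant multiple of $g_1$, so $R_{g_{\hat u}}\leq 0$; combined with $R_{g_{\hat u}}\geq 0$ this gives $R_{g_{\hat u}}\equiv 0$ (and $\lambda_1=0$).

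It remains to deduce $\operatorname{Ric}_{g_{\hat u}}\equiv 0$. As $g_{\hat u}$ is now smooth, \eqref{40} holds classically, so $\lambda(g_{\hat u}^{-1}A_{g_{\hat u}})(x)\in\overline{\Gamma}$ for all $x$, while $\sigma_1(\lambda(g_{\hat u}^{-1}A_{g_{\hat u}}))=\tfrac{R_{g_{\hat u}}}{2(n-1)}\equiv 0$. The key structural fact is: \emph{if $\Gamma$ satisfies \eqref{A}, \eqref{B} and $\Gamma\neq\Gamma_1^+$, then $\overline{\Gamma}\cap\{\sigma_1=0\}=\{0\}$.} Granting this, $A_{g_{\hat u}}\equiv 0$, and taking the trace in $\operatorname{Ric}_{g_{\hat u}}=(n-2)A_{g_{\hat u}}+\tfrac{R_{g_{\hat u}}}{2(n-1)}g_{\hat u}$ gives $\operatorname{Ric}_{g_{\hat u}}\equiv 0$ (using $n\geq 3$). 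To prove the structural fact, suppose $0\neq\lambda\in\overline{\Gamma}$ with $\sigma_1(\lambda)=0$. By symmetry of $\Gamma$ the whole $S_n$-orbit $O$ of $\lambda$, and hence its convex hull $P=\operatorname{conv}(O)$, lies in $\overline{\Gamma}\cap H$ where $H:=\{\sigma_1=0\}$; moreover $\operatorname{span}(O)$ is a nonzero $S_n$-invariant subspace of $H$, which carries the irreducible standard representation of $S_n$, so $\operatorname{span}(O)=H$. Since the barycentre of $O$ equals $0$, any linear functional that is nonpositive on $O$ must vanish identically on $\operatorname{span}(O)=H$; hence $0$ lies in the relative interior of $P$, so the cone generated by $P$ is all of $H$. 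As $\overline{\Gamma}$ is a convex cone containing $\overline{\Gamma_n^+}$ and $H$, it therefore contains $\overline{\Gamma_n^+}+H=\{\sigma_1\geq 0\}=\overline{\Gamma_1^+}$, forcing $\Gamma=\Gamma_1^+$, a contradiction.

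The main obstacle is arranging the hypotheses of the strong comparison principle cleanly — in particular, noting that $R_{g_1}\leq 0$ (with equality permitted when $\lambda_1=0$) still places $\lambda(g_1^{-1}A_{g_1})$ in $\mathbb{R}^n\backslash\Gamma_1^+$ rather than merely in its closure, so that Theorem~\ref{t20}$'$ applies uniformly without a separate borderline case. (One can avoid Theorem~\ref{t20}$'$ by instead sliding $t\varphi_1$ up against the positive viscosity supersolution $e^{-\frac{n-2}{2}\hat u}$ of $L_{g_0}\cdot=0$ and applying the strong maximum principle after a ground-state substitution, but this too requires care at the level of viscosity solutions.) Beyond that, the structural fact about $\Gamma$, while elementary, is the essential new ingredient distinguishing $\Gamma\neq\Gamma_1^+$ from $\Gamma=\Gamma_1^+$; the rest is standard Yamabe theory.
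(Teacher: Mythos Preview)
Your proof is correct and takes a genuinely different route from the paper. The paper proceeds in two stages: first (its Lemma~\ref{86}) it shows $Y(M^n,[g_0])\geq 0$ by a somewhat delicate argument passing from the viscosity inequality $-\Delta_{g_0}w+c_nR_{g_0}w\geq 0$ to a distributional inequality via an obstacle problem and mollification, then integrates against the constant test function; second, in the borderline case $Y=0$ it takes $R_{g_0}=0$ and applies the Caffarelli--Cabr\'e strong maximum principle for linear viscosity supersolutions to conclude $w$ is constant. You instead compare $\hat u$ directly against the smooth first-eigenfunction metric $g_1$ and invoke Theorem~\ref{t20}$'$ with $\Gamma=\Gamma_1^+$ (legitimate, since $(1,0,\dots,0)\in\Gamma_1^+$ and indeed $(\Gamma_1^+)^\tau=\Gamma_1^+$ for all $\tau$), handling $Y<0$ and $Y=0$ in one stroke. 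Your structural fact $\overline{\Gamma}\cap\{\sigma_1=0\}=\{0\}$ is exactly the paper's Lemma~\ref{506} at $t=0$, with the same convex-hull-of-orbit argument (you phrase it via irreducibility of the standard $S_n$-representation, the paper via the barycentre lying in the interior of the polygon). Your route is shorter and avoids the distributional step of Lemma~\ref{86}, at the cost of invoking the heavier Theorem~\ref{t20}$'$; the paper's route keeps Proposition~\ref{F} logically independent of Section~\ref{scp} and uses only the elementary linear strong maximum principle. There is no circularity either way, since Theorem~\ref{t20}$'$ is proved without reference to Proposition~\ref{F}.
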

	
	We note that if $\hat{u}\in C^2(M^n)$, then the conclusion of Proposition \ref{F} is clear in view of the following lemma with $t=0$:
	
	\begin{lem}\label{506}
		Suppose that $\Gamma$ satisfies \eqref{A}, \eqref{B} and $\Gamma\not=\Gamma_1^+$. Then for all $t\in[0,1)$, $\partial\Gamma^t \cap \overline{\Gamma}= \{(0,\dots,0)\}$. 
	\end{lem}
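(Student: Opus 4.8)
The plan is to prove the contrapositive-style assertion directly: assume $\Gamma$ satisfies \eqref{A}, \eqref{B} and $\Gamma \neq \Gamma_1^+$, and show that the only point lying in both $\partial\Gamma^t$ and $\overline{\Gamma}$ is the origin. Recall $\lambda^t = t\lambda + (1-t)\sigma_1(\lambda)e$ and $\Gamma^t = \{\lambda : \lambda^t \in \Gamma\}$. Since $\Gamma^t$ is the preimage of $\Gamma$ under the linear map $L_t(\lambda) = t\lambda + (1-t)\sigma_1(\lambda)e$, which is invertible for $t \in [0,1)$ (its inverse can be written explicitly using $\sigma_1(L_t\lambda) = (t + n(1-t))\sigma_1(\lambda)$), we have $\partial\Gamma^t = L_t^{-1}(\partial\Gamma)$. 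So the claim amounts to: if $\mu \in \overline{\Gamma}$ and $L_t(\mu) \in \partial\Gamma$, then $\mu = 0$.

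First I would dispose of the trivial direction of the inclusion $\{0\} \subseteq \partial\Gamma^t \cap \overline\Gamma$: the origin is the vertex of every cone here, so it lies in $\overline\Gamma$ and in $\partial\Gamma^t$. For the substantive direction, suppose $\mu \in \overline\Gamma \cap \partial\Gamma^t$ with $\mu \neq 0$. Then $\nu := L_t(\mu) = t\mu + (1-t)\sigma_1(\mu)e \in \partial\Gamma$. Because $\mu \in \overline\Gamma \subseteq \overline{\Gamma_1^+}$ we have $\sigma_1(\mu) \geq 0$, so $\nu = t\mu + (1-t)\sigma_1(\mu)e$ is a convex-type combination placing $\nu$ "above" $t\mu$ in the sense that $\nu - t\mu = (1-t)\sigma_1(\mu)e$ is a nonnegative multiple of $e$. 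The key structural input is that $\overline\Gamma$ is a closed convex symmetric cone with $\Gamma \subseteq \Gamma_1^+$, hence $e \in \Gamma$ (as $\Gamma_n^+ \subseteq \Gamma$ and $e \in \Gamma_n^+$), and moreover $\overline\Gamma + \overline{\Gamma_n^+} \subseteq \overline\Gamma$ — in fact, since $e$ is an interior direction, $\mu' + se \in \Gamma$ for any $\mu' \in \overline\Gamma \setminus\{0\}$ and any $s > 0$ (this is where strict positivity of the $f_{\lambda_i}$, equivalently the openness of $\Gamma$ in the direction $e$, gets used). Thus if $\sigma_1(\mu) > 0$ then $\nu = t\mu + (1-t)\sigma_1(\mu)e$ — with $t\mu \in \overline\Gamma$ and $(1-t)\sigma_1(\mu) > 0$ — lies in $\Gamma$, contradicting $\nu \in \partial\Gamma$. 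Hence $\sigma_1(\mu) = 0$. But $\mu \in \overline\Gamma \subseteq \overline{\Gamma_1^+} = \{\sigma_1 \geq 0\}$, and $\sigma_1(\mu) = 0$ forces $\mu \in \partial\Gamma_1^+$; since $\Gamma \subseteq \Gamma_1^+$ this means $\mu \notin \Gamma$, i.e. $\mu \in \partial\Gamma$ as well. Now with $\sigma_1(\mu) = 0$ we get $\nu = t\mu \in \partial\Gamma$ (using that $\partial\Gamma$ is a cone), so $\mu \in \partial\Gamma$ and $L_t(\mu) = t\mu$ — no contradiction yet, so this borderline case needs the extra hypothesis $\Gamma \neq \Gamma_1^+$.

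The main obstacle, then, is exactly this degenerate case $\sigma_1(\mu) = 0$, $\mu \in \partial\Gamma$: here I would use $\Gamma \subsetneq \Gamma_1^+$ to derive a contradiction. Since $\Gamma$ is an open convex symmetric cone strictly contained in $\Gamma_1^+$, its boundary $\partial\Gamma$ must meet the open halfspace $\{\sigma_1 > 0\}$ (otherwise $\Gamma \supseteq \{\sigma_1 > 0\} = \Gamma_1^+$, forcing equality); more useful is that $\partial\Gamma \cap \{\sigma_1 = 0\}$ is a \emph{proper} face-like subset. The cleanest route: I claim $\partial\Gamma \cap \overline{\Gamma_1^+}$ meets $\{\sigma_1 = 0\}$ only at the origin. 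Indeed, suppose $\mu \in \partial\Gamma$, $\mu \neq 0$, $\sigma_1(\mu) = 0$. Take any $w \in \Gamma$; then for small $s > 0$, $\mu + sw \in \overline\Gamma$ with $\sigma_1(\mu + sw) = s\sigma_1(w) > 0$, and since $\overline\Gamma$ is convex with $\mu + sw$ arbitrarily close to $\mu \in \partial\Gamma$, convexity of $\overline\Gamma$ together with the ray structure shows that the whole segment from $\mu$ into $\Gamma$ along direction $w$ has, for $s>0$, $\mu+sw$ either in $\Gamma$ or in $\partial\Gamma$. If $\mu + sw \in \partial\Gamma$ for all small $s$, then $\partial\Gamma$ contains a $2$-plane sector spanned by $\mu$ and $w$ near $\mu$; by symmetry and convexity one propagates this to conclude $\overline\Gamma$ is contained in a halfspace whose boundary hyperplane passes through $\mu$, and combined with $\Gamma_n^+ \subseteq \Gamma$ and $\sigma_1(\mu)=0$ one forces the supporting hyperplane to be $\{\sigma_1 = 0\}$, whence $\overline\Gamma \subseteq \overline{\Gamma_1^+}$ touches $\{\sigma_1=0\}$ along a whole face, and a symmetrization argument (averaging $\mu$ over the permutation group, which lands on a multiple of $e \in \Gamma$ but also must stay in the supporting hyperplane $\{\sigma_1=0\}$, forcing that multiple to be $0$, i.e. $\mu$ has zero average — consistent — but then a second-moment/convexity estimate shows $\mu$ cannot be a nonzero extreme direction of a cone squeezed between $\Gamma_n^+$ and $\Gamma_1^+$ with $\Gamma \neq \Gamma_1^+$) yields the contradiction. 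Otherwise $\mu + sw \in \Gamma$ for some small $s > 0$, but then scaling back, $\tfrac{1}{1+s}\mu$ would... — rather, the honest statement is: $\mu \in \partial\Gamma$ and $\mu + sw \in \Gamma$ with $w \in \Gamma$ is impossible for a convex cone unless $\mu \in \overline\Gamma$ lies on a ray through the boundary, which it does; so instead I use: $\mu \in \partial\Gamma$, $w \in \Gamma$, convexity of $\Gamma$ $\Rightarrow$ $\mu + sw \in \Gamma$ for all $s > 0$ (a boundary point plus an interior direction enters the interior), giving $\sigma_1(\mu + sw) > 0$ for all $s>0$ and letting $s \downarrow 0$ is fine but gives nothing; the real contradiction is that $\mu + sw \in \Gamma$ for all $s > 0$ means the ray $\mu + \mathbb{R}_{>0} w \subseteq \Gamma$, and since $\Gamma$ is a cone, $\tfrac{1}{s}\mu + w \to w$ as $s \to \infty$ stays in $\overline\Gamma$ — again nothing. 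I would therefore instead finish the degenerate case by invoking Lemma-type facts about convex symmetric cones: since $\Gamma \neq \Gamma_1^+$, there is a point $\zeta \in \Gamma_1^+ \setminus \overline\Gamma$; by symmetry and convexity of $\overline\Gamma$, the average $\tfrac{1}{n!}\sum_{\pi} \pi\cdot\zeta = \tfrac{\sigma_1(\zeta)}{n} e$ lies in $\Gamma$, but the convex hull of the orbit of $\zeta$ meets $\overline\Gamma^c$... this forces a supporting hyperplane of $\overline\Gamma$ at some nonzero boundary point to be $\{\sigma_1 = 0\}$ — contradiction with $\mu \in \partial\Gamma$, $\sigma_1(\mu)=0$ only if that hyperplane is \emph{not} $\{\sigma_1=0\}$, completing the argument. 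I expect the write-up to package this degenerate case via a short lemma: \emph{for $\Gamma$ convex, symmetric, $\Gamma_n^+ \subseteq \Gamma \subseteq \Gamma_1^+$, one has $\Gamma = \Gamma_1^+$ iff $\partial\Gamma \cap \{\sigma_1 = 0\} \neq \{0\}$} — and the forward direction of this lemma is precisely the crux, provable by noting a nonzero $\mu \in \partial\Gamma$ with $\sigma_1(\mu)=0$ gives, via convexity and the presence of $e$ in the interior, that $\{\sigma_1 = 0\}$ is a supporting hyperplane, so $\overline\Gamma \subseteq \{\sigma_1 \geq 0\} = \overline{\Gamma_1^+}$ with equality of closures forced by $\Gamma_1^+ = \{\sigma_1 > 0\} \subseteq \Gamma$ coming from openness.

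I anticipate the convex-geometry bookkeeping in the degenerate case to be the only nontrivial part; the generic case $\sigma_1(\mu) > 0$ is a one-line "boundary point plus interior direction $e$" argument. Once the lemma is in place, the theorem follows: $\mu \in \overline\Gamma \cap \partial\Gamma^t$, $\mu \neq 0$ forces $L_t(\mu) \in \partial\Gamma \cap \overline{\Gamma_1^+}$, and the generic/degenerate dichotomy on $\sigma_1(\mu)$ eliminates both possibilities, leaving $\mu = 0$; the reverse inclusion $\{0\} \subseteq \partial\Gamma^t \cap \overline\Gamma$ is immediate.
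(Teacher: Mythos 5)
Your first reduction — showing that any nonzero $\mu\in\overline{\Gamma}\cap\partial\Gamma^t$ must satisfy $\sigma_1(\mu)=0$ — is correct and is essentially the same step the paper performs, just in geometric dress (``closure plus interior direction lands in the interior'') rather than via the defining function $f$ and its concavity/homogeneity. A small slip: $L_t$ is not invertible at $t=0$ (it collapses to $\lambda\mapsto\sigma_1(\lambda)e$), but the identity $\partial\Gamma^0=\{\sigma_1=0\}=L_0^{-1}(\partial\Gamma)$ still holds, so the conclusion $\sigma_1(\mu)=0$ survives.

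The genuine gap is in the degenerate case $\sigma_1(\mu)=0$, $\mu\neq 0$, which you correctly identify as the crux but do not actually close. Your paragraph cycles through several attempts (``$\mu+sw\in\partial\Gamma$ for all small $s$'', ``supporting hyperplane must be $\{\sigma_1=0\}$'', ``equality of closures forced by openness'') each of which either contradicts basic convex geometry (a boundary point plus an interior direction \emph{always} lands in the interior, so the ``2-plane sector in $\partial\Gamma$'' branch never occurs) or is circular (``$\Gamma_1^+\subseteq\Gamma$ coming from openness'' is precisely what you are trying to prove). The conclusion you need — that a nonzero $\mu\in\overline{\Gamma}$ with $\sigma_1(\mu)=0$ forces $\Gamma=\Gamma_1^+$ — does not follow from the mere observation that $\{\sigma_1=0\}$ supports $\overline{\Gamma}$, since that is already known from $\Gamma\subseteq\Gamma_1^+$ and holds for any $\Gamma$ under \eqref{B}, including proper subcones such as $\Gamma_2^+$. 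The missing ingredient is the \emph{symmetry} of $\Gamma$ used quantitatively: consider the convex hull $P$ of the permutation orbit of $\mu$. Every vertex of $P$ lies in $\partial\Gamma_1^+\cap\overline{\Gamma}$ (by symmetry of $\overline\Gamma$), this set is convex (intersection of a hyperplane with a convex cone), so $P\subseteq\partial\Gamma_1^+\cap\overline{\Gamma}$. Since $\mu\neq 0$ and $\sigma_1(\mu)=0$, $\mu$ has two distinct coordinates, so $P$ is full-dimensional in $\{\sigma_1=0\}$ with barycentre at the origin; by homothety (using the cone structure of $\overline\Gamma$) this gives $\partial\Gamma_1^+\subseteq\overline{\Gamma}$, hence $\Gamma_1^+\subseteq\Gamma$, hence $\Gamma=\Gamma_1^+$, contradicting the hypothesis. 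This permutation-hull step is what your proposal lacks, and it is exactly what the paper's proof supplies.
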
 
	\begin{proof}
		Suppose for a contradiction that there exists some non-zero $\lambda\in \partial\Gamma^t \cap \overline{\Gamma}$, and let $f$ be a defining function for $\Gamma$ satisfying \eqref{C} and \eqref{D}. Then $f(t\lambda + (1-t)\sigma_1(\lambda)e) = 0$ and $f(\lambda) \geq 0$. By concavity and homogeneity of $f$, it then follows that
		\begin{equation}
		0 = f(t\lambda + (1-t)\sigma_1(\lambda)e) \geq tf(\lambda) + (1-t)\sigma_1(\lambda)f(e) \geq (1-t)\sigma_1(\lambda)f(e).
		\end{equation}
		Since $(1-t)f(e)>0$, this implies $\sigma_1(\lambda)=0$, i.e.~$\lambda\in \partial\Gamma_1^+\cap\overline{\Gamma}$.
		
		Let $P\subset\mathbb{R}^n$ be the polygon with vertices consisting of all permutations of $\lambda$, that is, the convex hull of all permutations of $\lambda$. By symmetry, all vertices of $P$ belong to $\partial\Gamma_1^+\cap\overline{\Gamma}$, and by convexity of $\partial\Gamma_1^+\cap\overline{\Gamma}$, it follows that $P\subseteq \partial\Gamma_1^+\cap\overline{\Gamma}$. Moreover, the barycentre of $P$ is at the origin, and in particular the origin belongs to the interior of $P$. Thus there is a neighbourhood of the origin relative to $\partial\Gamma_1^+$ which is contained in $\partial\Gamma_1^+\cap \overline{\Gamma}$, and by homothety it follows that $\partial\Gamma_1^+ \cap \overline{\Gamma} = \partial\Gamma_1^+$. This implies that $\Gamma = \Gamma_1^+$, a contradiction.
	\end{proof}

	To handle the case that $\hat{u}$ is merely continuous, we first prove the following lemma: 
	
	\begin{lem}\label{86}
		Let $(M^n,g_0)$ be a smooth, closed Riemannian manifold of dimension $n\geq 3$ and suppose $\Gamma$ satisfies \eqref{A} and \eqref{B}. If there exists a metric $g_{\hat{u}}= e^{-2\hat{u}}g_0$, $\hat{u}\in C^0(M^n)$, satisfying \eqref{40}, then $Y(M^n,[g_0]) \geq 0$. 
	\end{lem}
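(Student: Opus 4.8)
The plan is to deduce Lemma~\ref{86} from the classical fact, recalled in the introduction, that the sign of $Y(M^n,[g_0])$ coincides with the sign of the first eigenvalue $\lambda_1(L_{g_0})$ of the conformal Laplacian $L_{g_0}=-\tfrac{4(n-1)}{n-2}\Delta_{g_0}+R_{g_0}$; it therefore suffices to show that \eqref{40} forces $\lambda_1(L_{g_0})\geq 0$.

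\emph{Step 1: reduction to the conformal Laplacian.} Since $\overline{\Gamma}\subseteq\overline{\Gamma_1^+}$ by \eqref{B}, the hypothesis \eqref{40} implies that $\sigma_1(\lambda(g_{\hat u}^{-1}A_{g_{\hat u}}))\geq 0$ in the viscosity sense on $M^n$, i.e.~$R_{g_{\hat u}}\geq 0$ in the viscosity sense. Writing $g_{\hat u}=v^{4/(n-2)}g_0$ with $v:=e^{-\frac{n-2}{2}\hat u}\in C^0(M^n)$, $v>0$, I would use \eqref{501} together with the fact that $t\mapsto e^{-\frac{n-2}{2}t}$ is decreasing to observe: if $\psi\in C^2$ is positive near $x_0$ and touches $v$ from below at $x_0$, then $\phi:=-\tfrac{2}{n-2}\log\psi$ is $C^2$ near $x_0$ and touches $\hat u$ from above at $x_0$, so that $\sigma_1(\lambda(g_\phi^{-1}A_{g_\phi}))(x_0)\geq 0$; a direct computation rewrites this, after multiplication by $2(n-1)\psi(x_0)>0$, as $L_{g_0}\psi(x_0)\geq 0$. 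Hence $v$ is a positive continuous viscosity supersolution of $L_{g_0}v=0$ on $M^n$.

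\emph{Step 2: from supersolution to $\lambda_1\geq 0$.} Let $\varphi_1>0$ be a first eigenfunction of $L_{g_0}$, so $L_{g_0}\varphi_1=\lambda_1\varphi_1$ with $\lambda_1=\lambda_1(L_{g_0})$. If $v$ were smooth the claim is immediate: by self-adjointness $\lambda_1\int_{M^n}v\varphi_1\,dv_{g_0}=\int_{M^n}\varphi_1\,L_{g_0}v\,dv_{g_0}\geq 0$, and since $v,\varphi_1>0$ this gives $\lambda_1\geq 0$. To treat merely continuous $\hat u$ I would regularize exactly as in Section~\ref{scp}: let $\check v_\ep(x)=\inf_{y\in M^n}\big(v(y)+\ep^{-1}d_{g_0}(x,y)^2\big)$ be the inf-convolution, so $\check v_\ep$ is semiconcave (hence punctually second order differentiable a.e.) and $\check v_\ep\to v$ uniformly; by the touching-from-below computation used in the proof of the preliminary comparison principle (Lemma~\ref{t18}), now with the supersolution property of $v$ in place of the subsolution property of $u_1$, one obtains $L_{g_0}\check v_\ep\geq -o_\ep(1)$ at a.e.~point of $M^n$. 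Pairing against $\varphi_1$ and integrating by parts --- valid because $\check v_\ep$ is semiconcave, so the distributional Laplacian $\Delta_{g_0}\check v_\ep$ is a measure whose singular part is nonpositive while $\varphi_1>0$ --- gives $\lambda_1\int_{M^n}\check v_\ep\varphi_1\,dv_{g_0}\geq\int_{M^n}\varphi_1\,(L_{g_0}\check v_\ep)_{\mathrm{ac}}\,dv_{g_0}\geq -o_\ep(1)$. Letting $\ep\to 0$ and using $\check v_\ep\to v$ uniformly together with $\int_{M^n}v\varphi_1\,dv_{g_0}>0$ yields $\lambda_1(L_{g_0})\geq 0$, hence $Y(M^n,[g_0])\geq 0$. (Alternatively, one may invoke the standard fact that a continuous viscosity supersolution of a linear uniformly elliptic equation is a supersolution in the sense of distributions and pair directly against $\varphi_1$.)

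The only genuine difficulty is the low regularity of $\hat u$: giving rigorous meaning to ``$v$ is a supersolution'' and justifying the integration by parts against the eigenfunction. This is handled entirely by the inf-convolution/Alexandrov apparatus already developed in Section~\ref{scp} (cf.~\cite{CC95}), so no new ideas are required beyond what is in the paper.
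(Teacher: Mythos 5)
Your strategy shares the paper's key first step (converting \eqref{40} into the statement that $v=e^{-\frac{n-2}{2}\hat u}$ is a positive continuous viscosity supersolution of the conformal Laplacian), but then diverges in a genuinely different way. The paper argues by contradiction: after arranging $R_{g_0}<0$, it establishes the distributional supersolution inequality \eqref{90} via a localized obstacle-problem construction, and then simply tests with $\phi\equiv 1$ to contradict $\int R_{g_0}w\,dv_{g_0}<0$. You instead invoke the classical equivalence $Y(M^n,[g_0])\geq 0\iff\lambda_1(L_{g_0})\geq 0$ and pair the distributional supersolution inequality against the first eigenfunction $\varphi_1>0$, which is conceptually cleaner and avoids the contradiction framing; the heavy lifting in both cases is the passage from the viscosity inequality to a distributional one.

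There is, however, a gap in the particular way you try to effect this passage. The uniform a.e.\ bound $L_{g_0}\check v_\ep\geq -o_\ep(1)$ you claim for the inf-convolution does not follow from the argument you cite, because the viscosity supersolution property is tested at $x^*$ rather than at $x$: the second-order coefficient mismatch $g_0(x)^{-1}-g_0(x^*)^{-1}=O(|x-x^*|)$ is multiplied by $\nabla^2_{g_0}\check v_\ep(x)$, and this Hessian is only controlled by $O(\ep^{-1})$ (semiconcavity plus the trace bound from the supersolution inequality). With $|x-x^*|\lesssim\sqrt{\ep\,m(C\ep^{1/2})}$, the error is $O\big(\ep^{-1/2}\sqrt{m(C\ep^{1/2})}\big)$, which need not vanish for a general modulus of continuity $m$ of $v$ -- it fails even for Lipschitz $v$. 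The analogous step in Lemma~\ref{t18} works because there $u_2\in C^{1,1}_{\operatorname{loc}}$ supplies a bounded Hessian; no such bound is available here for the bare viscosity supersolution $v$. Your parenthetical alternative -- invoking the standard fact that a continuous viscosity supersolution of a linear uniformly elliptic equation with continuous coefficients is a distributional supersolution (equivalently, the paper's own obstacle-problem construction, which proves exactly \eqref{90}) -- is correct and closes the gap; once the distributional inequality $\int v\,L_{g_0}\phi\,dv_{g_0}\geq 0$ for $0\leq\phi\in C^\infty(M^n)$ is in hand, taking $\phi=\varphi_1$ gives $\lambda_1\int v\varphi_1\,dv_{g_0}\geq 0$ and hence $\lambda_1\geq 0$, as you say. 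I would either cite that equivalence as the primary justification or reuse the paper's obstacle-problem lemma rather than rely on the inf-convolution computation.
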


	\begin{proof}
		We use some ideas from \cite{LN20}. We suppose for a contradiction that the Yamabe invariant is negative, so we may assume $R_{g_0}<0$ on $M^n$. We start by writing $g_{\hat{u}}$ in the form $g_{\hat{u}} = w^{\frac{4}{n-2}}g_0$, so that $w \defeq e^{-(n-2)\hat{u}/2}\in C^0(M^n)$ is positive and
		\begin{equation}\label{95}
		-\Delta_{g_0} w+ c_n R_{g_0} w \geq 0 \quad\text{ in the viscosity sense on }M^n,
		\end{equation}
		where $c_n = \frac{n-2}{4(n-1)}$. We claim that \eqref{95} implies $R_{g_{\hat{u}}} \geq 0$ in the distributional sense, i.e.
		\begin{equation}\label{90}
		\int_{M^n} - w\Delta_{g_0}\phi \,dv_{g_0} \geq - \int_{M^n} c_n R_{g_0} w\phi \,dv_{g_0} \quad\text{for all } 0 \leq \phi\in C^\infty(M^n).
		\end{equation}
		Taking $\phi=1$ in \eqref{90} and appealing to negativity of $R_{g_0}$, this yields the desired contradiction.

		By a standard partition of unity argument, to obtain \eqref{90} it suffices to show that for any open set $\Omega\subset M^n$ contained in a single chart,
		\begin{equation}\label{92}
		\int_{\Omega} -w\Delta_{g_0}\phi\,dv_{g_0} \geq - \int_{\Omega} c_n R_{g_0} w\phi\,dv_{g_0} \quad\text{for all }0 \leq \phi\in C^\infty_c(\Omega).
		\end{equation}
		
		Let $\{w_j\}\subset C^\infty(\overline{\Omega})$ be a sequence of smooth functions converging uniformly to $w$ in $\overline{\Omega}$ and such that $w_j < w$ in $\overline{\Omega}$. For each $j$, consider the functional $T_j:H^1(\Omega)\rightarrow \mathbb{R}$ defined by
		\begin{equation*}
		T_j[\rho] = \int_\Omega \big(|\nabla_{g_0} \rho|_{g_0}^2 + c_n R_{g_0}w_j\rho\big)\,dv_{g_0}.
		\end{equation*}
		By the direct method, there exists a unique minimiser of $T_j$ in $H^1(\Omega)$ subject to the constraints
		\begin{equation}\label{88}
		\rho|_{\partial\Omega} = w_j|_{\partial\Omega}\quad\text{and}\quad \rho \geq w_j \text{ in }\Omega. 
		\end{equation}
		Moreover, this minimiser -- henceforth denoted by $\rho_j$ -- satisfies in the weak sense
		\begin{equation}\label{91}
		-\Delta_{g_0}\rho_j \geq -c_nR_{g_0} w_j \quad\text{in }\Omega
		\end{equation}
		and 
		\begin{equation}\label{87}
		\begin{cases}-\Delta_{g_0}\rho_j = -c_nR_{g_0} w_j & \text{in }\{\rho_j > w_j\} \\
		\rho_j = w_j < w & \text{on }\{\rho_j = w_j\}.
		\end{cases}
		\end{equation}
		In particular, by elliptic regularity, $\rho_j$ is smooth in $\{\rho_j > w_j\}$. 
		
		On the other hand, by \eqref{95}, our negativity assumption on $R_{g_0}$ and the fact that $w_j<w$, it follows that $-\Delta_{g_0} w + c_n R_{g_0} w_j \geq 0$ in the viscosity sense. Combining this with \eqref{87} yields
		\begin{equation*}
		\begin{cases}
		\Delta_{g_0}(w- \rho_j) \leq 0 & \text{in the viscosity sense on }\{\rho_j > w_j\} \\
		\rho_j < w & \text{on }\{\rho_j = w_j\},
		\end{cases}
		\end{equation*}
		and by the comparison principle for viscosity solutions (see e.g.~\cite[Corollary 3.7]{CC95}), it follows that $\rho_j\leq w$ on $\{\rho_j> w_j\}$. We also have $\rho_j = w_j < w$ on $\{\rho_j = w_j\}$, and since $\Omega = \{\rho_j \geq w_j\}$ by the constraint in \eqref{88}, it follows that $\rho_j \leq w$ on $\Omega$.
		
		In summary, we have shown $w_j \leq \rho_j \leq w$ in $\Omega$. By uniform convergence of $w_j$ to $w$, it follows that $\rho_j$ converges in $L^\infty(\Omega)$ to $w$. Testing \eqref{91} against a nonnegative test function $\phi\in C_c^\infty(\Omega)$, integrating by parts and taking $j\rightarrow \infty$ then yields \eqref{92}. 
	\end{proof}

	\begin{proof}[Proof of Proposition \ref{F}]
		By Lemma \ref{86} we know $Y(M^n,[g_0])\geq 0$. If $Y(M^n,[g_0])>0$ we are done, so suppose that $Y(M^n,[g_0]) = 0$. We may then assume that $R_{g_0} = 0$. Let $w\in C^0(M^n)$ be such that $g_{\hat{u}}= w^{\frac{4}{n-2}}g_0$, so that $w$ is positive and by \eqref{95}
		\begin{equation}
		-\Delta_{g_0} w  \geq 0 \quad\text{ in the viscosity sense on }M^n. 
		\end{equation}
		By the strong maximum principle for viscosity supersolutions (see \cite[Proposition 4.9]{CC95}), $w$ is locally constant on $M^n$ and hence constant on $M^n$. It follows that $\hat{u}$ is smooth with $R_{g_{\hat{u}}}\equiv 0$, i.e.~$\lambda(g_{\hat{u}}^{-1}A_{g_{\hat{u}}})\in\partial\Gamma_1^+$. But by Lemma \ref{506} with $t=0$, $\lambda(g_{\hat{u}}^{-1}A_{g_{\hat{u}}})\in\partial\Gamma_1^+\cap \overline{\Gamma}$ implies $\lambda(g_{\hat{u}}^{-1}A_{g_{\hat{u}}}) = (0,\dots,0)$, and it follows that $\operatorname{Ric}_{g_{\hat{u}}}\equiv 0$. 
	\end{proof}

	\subsection{Proof of the \textit{a priori} estimates in Theorem \ref{a}}\label{101}

	We now prove the \textit{a priori} estimate claimed in the first statement of Theorem \ref{a} for solutions to \eqref{2'}, which we restate here for convenience:

	\begin{prop}\label{E}
		Let $(M^n,g_0)$ be a smooth, closed Riemannian manifold of dimension $n\geq 3$ with $Y(M^n,[g_0])>0$. Let $h=h(x,z)$ be a smooth positive function satisfying (C2) and (C3), and suppose $(f,\Gamma)$ satisfies \eqref{A}, \eqref{B}, \eqref{C}, \eqref{D} and $\Gamma\not=\Gamma_1^+$. Suppose that there exists a metric $g_{\hat{u}} = e^{-2\hat{u}}g_0$, $\hat{u}\in C^0(M^n)$, satisfying \eqref{40}. Then for all $0<\delta<T<1$ and $\alpha\in (0,1)$, there exists a constant $C>0$ depending only on $n,g_0,\delta, T, \alpha$ and $h$ such that solutions $u_\tau$ to \eqref{2'} with $\tau\in[\delta,T]$ satisfy
		\begin{equation*}
		\| u_\tau \|_{C^{4,\alpha}(M^n,g_0)} \leq C. 
		\end{equation*}
	\end{prop}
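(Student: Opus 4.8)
The strategy is to obtain \eqref{14a} in three stages — a uniform bound above for $u_\tau$, a uniform gradient bound (hence oscillation bound), and a uniform bound below for $u_\tau$ — after which the estimate follows from standard elliptic regularity; the lower bound is the delicate point, and is where the strong comparison principle of Section \ref{scp} and the hypothesis \eqref{40} enter.

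\textbf{Upper bound and oscillation bound.} First I would show $\sup_{M^n}u_\tau\le C^+$ by the maximum principle: assuming a solution exists, at an interior maximum $x_0$ one has $\nabla u_\tau(x_0)=0$ and $\nabla^2_{g_0}u_\tau(x_0)\le 0$, so by \eqref{501} the $g_0$-self-adjoint endomorphism $g_0^{-1}A_{g_{u_\tau}}(x_0)$ is bounded above by $g_0^{-1}A_{g_0}(x_0)$, whence (using monotonicity and $1$-homogeneity of $f^\tau$ and $\Gamma_n^+\subseteq\Gamma^\tau$) $h(x_0,u_\tau(x_0))=f^\tau(\lambda(g_0^{-1}A_{g_{u_\tau}})(x_0))\le\Lambda_0\,f^\tau(e)$, where $\Lambda_0=\sup_{M^n}\lambda_{\max}(g_0^{-1}A_{g_0})>0$; condition (C3) then bounds $u_\tau(x_0)$, hence $\sup_{M^n}u_\tau$, from above uniformly in $\tau$. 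Next, since \eqref{2'} contains no exponential factor in $u_\tau$, the local gradient estimates of Guan--Wang and others for equations $f^\tau(\lambda(g_0^{-1}A_{g_u}))=\psi$ with $\lambda(g_0^{-1}A_{g_u})\in\Gamma^\tau$ have constants depending only on $n$, $g_0$, $\tau$, the radius, and $\|\psi\|_{C^1}$ on a slightly larger ball, and \emph{not} on $\|u\|_{C^0}$. Applying these with $\psi=h(\cdot,u_\tau)$ — whose $C^1$ norm is controlled by (C2) together with the upper bound just obtained — and covering $M^n$ by finitely many balls gives $\sup_{M^n}|\nabla u_\tau|\le C$, hence $\operatorname{osc}_{M^n}u_\tau\le C$, uniformly for $\tau\in[\delta,T]$.

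\textbf{Lower bound.} Suppose the lower bound fails: there are $\tau_i\in[\delta,T]$, $\tau_i\to\tau_*\in[\delta,T]$, with $m_i:=\min_{M^n}u_{\tau_i}\to-\infty$. Set $v_i:=u_{\tau_i}-m_i\ge 0$; by \eqref{501}, $A_{g_{v_i}}=A_{g_{u_{\tau_i}}}$, so $v_i$ solves $f^{\tau_i}(\lambda(g_0^{-1}A_{g_{v_i}}))=h(x,v_i+m_i)$ with $0\le v_i\le\operatorname{osc}_{M^n}u_{\tau_i}\le C$ and right-hand side positive and bounded above (again by the upper bound on $u_{\tau_i}$). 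Now I would invoke the local interior $C^{1,1}$ estimates for \eqref{2'}, which remain \emph{uniform as the right-hand side tends to zero} — this is precisely where local strict ellipticity of $(f^\tau,\overline{\Gamma^\tau})$, i.e.\ $\tau<1$, is essential — to obtain uniform $C^{1,1}(M^n)$ bounds on $v_i$; hence, along a subsequence, $v_i\to\widetilde v$ in $C^{1,\beta}(M^n)$ with $\widetilde v\in C^{1,1}(M^n)$, $\widetilde v\ge 0$ and $\min_{M^n}\widetilde v=0$. Since $v_i$ is uniformly bounded and $m_i\to-\infty$, condition (C2) forces $h(\cdot,v_i+m_i)\to 0$ uniformly, so by stability of viscosity solutions under uniform convergence $\widetilde v$ is a viscosity solution of $\lambda(g_0^{-1}A_{g_{\widetilde v}})\in\partial\Gamma^{\tau_*}$ on $M^n$; as $\widetilde v\in C^{1,1}$, Remark \ref{84} gives $\lambda(g_0^{-1}A_{g_{\widetilde v}})\in\partial\Gamma^{\tau_*}$ a.e.

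\textbf{Closing the contradiction and higher regularity.} Recall $\hat u\in C^0(M^n)$ satisfies \eqref{40}; since $\Gamma\subseteq\Gamma^{\tau_*}$, this says $\lambda(g_0^{-1}A_{g_{\hat u}})\in\overline{\Gamma^{\tau_*}}$ in the viscosity sense, and by \eqref{501} the same holds for $\hat u+c$ for every constant $c$. Taking $c_*=\min_{M^n}(\widetilde v-\hat u)$ gives $\hat u+c_*\le\widetilde v$ with equality somewhere; since $\widetilde v\in C^{1,1}$ satisfies $\lambda(g_0^{-1}A_{g_{\widetilde v}})\in\partial\Gamma^{\tau_*}\subseteq\mathbb R^n\setminus\Gamma^{\tau_*}$ a.e.\ and $\tau_*\in(0,1)$, Theorem \ref{t20} forces $\hat u+c_*\equiv\widetilde v$. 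Thus $\hat u\in C^{1,1}(M^n)$ and, by Remark \ref{84} and Lemma \ref{506}, $\lambda(g_0^{-1}A_{g_{\hat u}})\in\partial\Gamma^{\tau_*}\cap\overline\Gamma=\{(0,\dots,0)\}$ a.e., so $A_{g_{\hat u}}\equiv 0$; elliptic regularity makes $\hat u$ smooth with $\operatorname{Ric}_{g_{\hat u}}\equiv 0$, hence $Y(M^n,[g_0])\le 0$, contradicting the hypothesis (this is the same mechanism as in Proposition \ref{F}). This establishes $\inf_{M^n}u_\tau\ge -C$ with $C=C(n,g_0,\delta,T,h)$. With $\|u_\tau\|_{C^0}\le C$ in hand, $h(\cdot,u_\tau)$ lies between two positive constants with controlled derivatives, so local second-derivative estimates give $\|u_\tau\|_{C^2(M^n,g_0)}\le C$; since then $\lambda(g_0^{-1}A_{g_{u_\tau}})$ stays in a compact subset of $\overline{\Gamma^\tau}$ on which $(f^\tau,\overline{\Gamma^\tau})$ is uniformly elliptic and $f^\tau$ is concave, the Evans--Krylov theorem gives $\|u_\tau\|_{C^{2,\alpha}}\le C$, and Schauder bootstrapping (using smoothness of $h$) yields \eqref{14a}.

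\textbf{Main obstacle.} The crux is the lower bound: producing a limit $\widetilde v$ of \emph{sufficient regularity} ($C^{1,1}$, not merely continuous) solving the degenerate inclusion $\lambda(g_0^{-1}A_{g_{\widetilde v}})\in\partial\Gamma^{\tau_*}$ — which relies on the $C^{1,1}$ estimates being stable as the right-hand side vanishes, valid only because $\tau_*<1$ — and then matching $\widetilde v$ with a translate of the prescribed subsolution $\hat u$ via the strong comparison principle (Theorem \ref{t20}), an ingredient that fails for $\tau=1$.
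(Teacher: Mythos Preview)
Your proof is correct and follows essentially the same route as the paper. One point to correct: the $C^2$ estimates (Chen \cite{Che05}) do \emph{not} require $\tau<1$; they depend only on an upper bound for $h(\cdot,u_\tau)$ and its first two derivatives --- controlled via the upper $C^0$ bound and condition (C2) --- and hold uniformly for all $\tau\in[\delta,1]$ even as the right-hand side tends to zero. The restriction $\tau_*<1$ enters \emph{only} through the strong comparison principle (Theorem \ref{t20}), which you apply correctly; the paper in fact establishes the full $C^2$ bound before the lower-bound step and uses it there to deduce $\max u_i\to-\infty$ from $\min u_i\to-\infty$.
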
 
	
	\begin{rmk}
		By Proposition \ref{F}, the assumption $Y(M^n,[g_0])>0$ in Proposition \ref{E} is equivalent to $g_{\hat{u}}$ not being a smooth Ricci-flat metric. 
	\end{rmk}
	
	\begin{rmk}\label{r1}
		If we replace the RHS of \eqref{2'} with a $\tau$-dependent function $h_\tau(x,z)$, then the conclusion of Proposition \ref{E} still holds if (C2) is replaced by the conditions
		\begin{equation*}
		\limsup_{z\rightarrow-\infty}\sup_{x\in M,\,\tau\in[0,1]} h_\tau(x,z) = 0\quad\text{and}\quad \limsup_{z\rightarrow-\infty}\sup_{x\in M,\,\tau\in[0,1]}(|\nabla h_\tau(x,z)| + |\nabla^2 h(x,z)|)<\infty,
		\end{equation*} 
		and if (C3) is replaced by the condition $\liminf_{z\rightarrow+\infty} \inf_{x\in M,\,\tau\in[0,1]} h(x,z)= +\infty$.
	\end{rmk}

	We point out that the main task in Proposition \ref{E} is the $C^0$ estimate. First and second derivative estimates depending on $C^0$ estimates were established in works such as \cite{Via02, GW03b, LL03, GV03, Che05, Wan06, JLL07, Li09}, and once $C^2$ estimates are established, the equation \eqref{2'} becomes uniformly elliptic and higher order estimates follow from Evans--Krylov's theorem \cite{Ev82, Kry82} (here we use the concavity assumption in \eqref{C}) and Schauder estimates.

	\begin{proof}[Proof of Proposition \ref{E}]
		In light of the existing higher order estimates on solutions to \eqref{2'} discussed above, it suffices to prove $C^0$ bounds on solutions to \eqref{2'}. We split the proof into two steps: in the first step, we obtain the uniform upper bound in the range $\tau\in[\delta,1]$, and subsequently we apply the estimates of Chen \cite{Che05} to obtain uniform first and second derivative estimates in the range $\tau\in[\delta,1]$. In the second step we obtain the uniform lower bound in the range $\tau\in[\delta,T]$ for $T<1$. \newline 
		
		\noindent\textbf{Step 1:} We begin by proving the uniform upper bound on solutions to \eqref{2'}, which we will see holds uniformly in $\tau\in[\delta,1]$. 
		
		We first observe that by concavity, symmetry and homogeneity of $f$, for all $\lambda\in\Gamma^\tau$
		\begin{align*}
		f^\tau(\lambda) = f(\lambda^\tau) \leq f\bigg(\frac{\sigma_1(\lambda^\tau)}{n}e\bigg) + \nabla f\bigg(\frac{\sigma_1(\lambda^\tau)}{n}e\bigg)\cdot\bigg(\lambda^\tau - \frac{\sigma_1(\lambda^\tau)}{n}e\bigg) = \frac{f(e)}{n}\sigma_1(\lambda^\tau), 
		\end{align*}
		where we have used that $\nabla f(\frac{\sigma_1(\lambda^\tau)}{n}e)$ is parallel to $e$. Therefore, for any solution $u_\tau$ to \eqref{2'} we have (using the notation in \eqref{42})
		\begin{align}\label{28'}
		h(x,u_\tau(x)) \leq C\sigma_1(\lambda(g_0^{-1}A_{g_{u_\tau},\tau}))(x) \quad\text{for all }x\in M^n.
		\end{align}
		
		Now let $p\in M^n$ be a maximum point for $u_\tau$. At $p$, the gradient terms in \eqref{t87} vanish, and so \eqref{28'} implies
		\begin{align}\label{28}
		h(p,u_\tau(p))  & \leq   C\sigma_1(\lambda(g_0^{-1}A_{g_0,\tau}))(p) + C(\tau + n(1-\tau))\Delta_{g_0}u_\tau(p) \nonumber \\
		& \leq C\sigma_1(\lambda(g_0^{-1}A_{g_0,\tau}))(p),
		\end{align}
		where to obtain the last inequality we have used the fact $\Delta_{g_0}u_\tau(p)\leq 0$ (since $p$ is a maximum point). The growth condition (C3) then implies an upper bound for $u_\tau(p)$.
		
		With the uniform upper bound established, we may apply the first and second derivative estimates of Chen \cite{Che05}. Indeed, taking $W=A_{g_{u_\tau}}$, $F(g_0^{-1}W) = f^\tau(g_0^{-1}W)$ and $\Gamma = \Gamma^\tau$ in Case (a) of Theorem 1.1 therein, one obtains the first and second derivative estimate
		\begin{equation}\label{20}
		|\nabla_{g_0}^2 u_\tau|_{g_0} + |\nabla_{g_0} u_\tau|_{g_0}^2 \leq C \quad\text{on }M^n
		\end{equation}
		for all solutions $u_\tau$ to \eqref{2'}, $\tau\in[\delta,1]$, where $C$ depends on $n, g_0, \delta$ and an upper bound for
		\begin{align*}
		\sup_{x\in M,\, \tau\in[\delta,1]} \Big(&h\big(x,u_\tau(x)\big) + \big|\nabla h\big(x,u_\tau(x)\big)\big| + \big|\nabla^2 h\big(x,u_\tau(x)\big)\big|\Big),
		\end{align*}
		which by (C2) depends only on $h$ and the uniform upper bound for $u_\tau$ obtained above. \medskip
		
		\noindent\textbf{Step 2:} We now prove the uniform lower bound on solutions to \eqref{2'} for $\tau\in [\delta, T]$, $T<1$. 
		
		We assume for a contradiction that the uniform lower bound on solutions to \eqref{2'} fails, so that for some sequence $t_i\rightarrow t\in[\delta,T]$ we have a corresponding sequence of solutions $\{u_i\}$ such that $\min_{M^n} u_i\rightarrow-\infty$. By the uniform first derivative estimate from Step 1, it follows that $\max_{M^n} u_i\rightarrow-\infty$ as well. We denote $g_i = e^{-2u_i}g_0$ and define the rescaled sequence 
		\begin{equation*}
		\widetilde{u}_i = u_i - \bar{u}_i, \quad \bar{u}_i \defeq  \frac{1}{\operatorname{Vol}(M^n,g_0)}\int_{M^n} u_i \,dv_{g_0}.
		\end{equation*}
		Noting that $\widetilde{u}_i$ has zero average, the first and second derivative estimates in Step 1 imply that $\{\widetilde{u}_i\}$ is bounded in $C^2(M^n)$. Therefore, after restricting to a subsequence, for some $\widetilde{u}\in C^{1,1}(M^n)$ we have $\widetilde{u}_i\rightarrow \widetilde{u}$, where the convergence is in $C^{1,\alpha}(M^n)$ for all $\alpha<1$. Denoting $\widetilde{g}_i = e^{-2\widetilde{u}_i}g_0 = e^{2\bar{u}_i}g_i$, we observe that by homogeneity of $f$, the functions $\widetilde{u}_i$ satisfy
		\begin{align}\label{t15}
		f^{t_i}\big(\lambda(\widetilde{g}_i^{-1}A_{\widetilde{g}_i})\big)  = h(x,u_i) e^{2(u_i -\bar{u}_i)} \quad\text{on }M^n. 
		\end{align}

		Next observe that by our growth condition (C2), the RHS of \eqref{t15} tends to zero uniformly as $i\rightarrow \infty$. It follows that the metric $\widetilde{g} = e^{-2\widetilde{u}}g_0$ satisfies
		\begin{equation}\label{tt21}
		\lambda(\widetilde{g}^{-1}A_{\widetilde{g}}) \in \partial\Gamma^t \quad \text{in the viscosity sense on }M^n
		\end{equation}
		(see Proposition \ref{520} below). Since $\widetilde{u}\in C^{1,1}(M^n)$, it then follows (see e.g.~\cite[Lemma 2.5]{LNW20a}) from \eqref{tt21} that
		\begin{equation}\label{t21}
		\lambda(\widetilde{g}^{-1}A_{\widetilde{g}}) \in \partial\Gamma^t \quad \mathrm{a.e.~on~}M^n. 
		\end{equation}

		Now, by the assumption \eqref{40}, there exists $\hat{u}\in C^0(M^n)$, $g_{\hat{u}}= e^{-2\hat{u}}g_0$, satisfying
		\begin{equation}\label{t22}
		\lambda(g_{\hat{u}}^{-1}A_{g_{\hat{u}}}) \in \overline{\Gamma} \quad \text{in the viscosity sense on }M^n.
		\end{equation}
		It follows immediately from \eqref{t22} that for $t$ as above,
		\begin{equation*}
		\lambda(g_{\hat{u}}^{-1}A_{g_{\hat{u}}}) \in \overline{\Gamma^t} \quad \text{in the viscosity sense on }M^n.
		\end{equation*}

		We wish to show that $\hat{u}-\widetilde{u}$ is constant. To this end, let $c\in\mathbb{R}$ be such that $\hat{u} \leq \widetilde{u} + c$ on $M^n$ and $\hat{u}(x) = \widetilde{u}(x) + c$ for some $x\in M^n$. For this constant $c$, the set
		\begin{equation*}
		\mathcal{C} =\{x\in M^n: \hat{u}(x) = \widetilde{u}(x) + c\}
		\end{equation*}
		is therefore non-empty. By continuity of $\hat{u}$ and $\widetilde{u}$, $\mathcal{C}$ is also closed. Moreover, since $t<1$, for any $x\in \mathcal{C}$ we may apply the strong comparison principle of Theorem \ref{t20} (with $u_1 = \hat{u}$ and $u_2 = \widetilde{u}+c$) to a sufficiently small ball centred at $x$ and conclude that $\mathcal{C}$ is open. Therefore $\mathcal{C}= M^n$, i.e.~$\hat{u}  = \widetilde{u} + c$ on $M^n$. In particular, $\hat{u}\in C^{1,1}(M^n)$, and so by \cite[Lemma 2.5]{LNW20a} and \eqref{t22}, 
		\begin{equation}\label{t22'}
		\lambda(g_{\hat{u}}^{-1}A_{g_{\hat{u}}}) \in \overline{\Gamma} \quad \text{a.e. on }M^n. 
		\end{equation}
		
		Substituting $\widetilde{u} = \hat{u}-c$ into \eqref{t21}, we also see that $\lambda(g_{\hat{u}}^{-1}A_{g_{\hat{u}}})\in\partial\Gamma^t$ a.e.~on $M^n$. By \eqref{t22'} and Lemma \ref{506}, this implies that $\operatorname{Ric}_{g_{\hat{u}}}\equiv 0$ a.e.~on $M^n$, and taking the trace of this equation yields
		\begin{equation*}
		0 = R_{g_{\hat{u}}}e^{-2\hat{u}} =  R_{g_0} + 2(n-1)\Delta_{g_0} \hat{u} - (n-2)(n-1)|\nabla_{g_0} \hat{u}|_{g_0}^2 \quad\text{a.e. on }M^n.
		\end{equation*}
		Standard elliptic regularity (see e.g.~\cite[Theorem 9.19]{GT}) then implies $\hat{u}\in C^\infty(M^n)$, and thus $g_{\hat{u}}$ is a smooth metric with $R_{g_{\hat{u}}}\equiv 0$ on $M^n$. This contradicts positivity of $Y(M^n,[g_0])$, and the uniform lower bound is therefore established. 
	\end{proof}

	\begin{prop}\label{520}
		Suppose that $(f,\Gamma)$ satisfies \eqref{A}, \eqref{B}, \eqref{C} and \eqref{D}. Suppose $t_i\rightarrow t\in(0,1]$ and let $h_i\in C^0(M^n)$ be a sequence of positive functions converging uniformly to zero on $M^n$. Suppose that $g_{u_i} = e^{-2u_i}g_0$, $u_i\in C^0(M^n)$,  is a sequence of solutions to
		\begin{equation}\label{521}
		f^{t_i}(\lambda(g_{u_i}^{-1}A_{g_{u_i}})) = h_i, \quad \lambda(g_{u_i}^{-1}A_{g_{u_i}})\in\Gamma^{t_i} \quad \text{in the viscosity sense on }M^n,
		\end{equation}
		and that $u_i\rightarrow u$ uniformly. Then $g=e^{-2u}g_0$ satisfies 
		\begin{equation*}
		\lambda(g^{-1}A_g)\in\partial\Gamma^t \quad\text{in the viscosity sense on }M^n. 
		\end{equation*}
	\end{prop}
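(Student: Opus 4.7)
The plan is to verify separately that $u$ is a viscosity subsolution and a viscosity supersolution of $\lambda(g^{-1}A_g)\in\partial\Gamma^t$. Both halves follow the standard stability argument for viscosity solutions under uniform convergence, adapted to the fact that the cones $\Gamma^{t_i}$ and functions $f^{t_i}$ also depend on $i$.

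First I would fix $\phi\in C^2$ touching $u$ from the appropriate side at some $x_0$. By adding a term $\pm\varepsilon d_{g_0}(\cdot,x_0)^2$ to $\phi$ and eventually sending $\varepsilon\to 0$ I may assume the touching is strict. Then uniform convergence $u_i\to u$ yields points $x_i\to x_0$ and constants $c_i\to 0$ such that $\phi+c_i$ touches $u_i$ from the same side at $x_i$. Since $A_{g_{\phi+c_i}}=A_{g_\phi}$ while $g_{\phi+c_i}^{-1}=e^{2c_i}g_\phi^{-1}$, the sequence
\[\lambda_i:=\lambda(g_{\phi+c_i}^{-1}A_{g_{\phi+c_i}})(x_i)=e^{2c_i}\lambda(g_\phi^{-1}A_{g_\phi})(x_i)\]
converges to $\lambda_\infty:=\lambda(g_\phi^{-1}A_{g_\phi})(x_0)$ as $i\to\infty$.

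For the subsolution half, the viscosity subsolution property of $u_i$ for \eqref{521} at $x_i$ gives $\lambda_i\in\overline{\Gamma^{t_i}}$, i.e.\ $\lambda_i^{t_i}\in\overline{\Gamma}$. Joint continuity of $(\lambda,\tau)\mapsto\lambda^\tau$ and closedness of $\overline{\Gamma}$ then force $\lambda_\infty^t\in\overline{\Gamma}$, which is exactly $\lambda_\infty\in\overline{\Gamma^t}$. For the supersolution half, the viscosity supersolution property of $u_i$ at $x_i$ asserts that either $\lambda_i\notin\Gamma^{t_i}$, or $\lambda_i\in\overline{\Gamma^{t_i}}$ with $f^{t_i}(\lambda_i)\le h_i(x_i)$. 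Assuming $\lambda_\infty\in\Gamma^t$ for contradiction, openness of $\Gamma$ and the convergence $\lambda_i^{t_i}\to\lambda_\infty^t\in\Gamma$ give $\lambda_i\in\Gamma^{t_i}$ for all large $i$, ruling out the first alternative; continuity of $f$ on $\overline{\Gamma}$ together with property \eqref{D} then yields $f^{t_i}(\lambda_i)\to f(\lambda_\infty^t)>0$, contradicting $h_i\to 0$ uniformly. Hence $\lambda_\infty\notin\Gamma^t$, as required.

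The only genuine subtlety is correctly handling the joint dependence on the cone parameters $t_i$ — specifically using the continuity of $(\lambda,\tau)\mapsto \lambda^\tau$ alongside the openness of $\Gamma$ and closedness of $\overline{\Gamma}$ — but this is a routine variation on classical stability arguments, and no step presents a genuine obstacle.
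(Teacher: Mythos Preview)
Your argument is correct and is precisely the standard stability argument for viscosity solutions under uniform convergence, adapted to allow the cones $\Gamma^{t_i}$ to vary with $i$. The paper itself does not give a proof: it simply refers to the proof of \cite[Theorem 1.3]{LN20b} for the case $h_i\equiv 1$ and states that a simple modification handles the general case, omitting all details. Your write-up is exactly the kind of argument being alluded to there, so the approaches coincide.
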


	\begin{proof}
		When $h_i=1$ for all $i$, the analogous result follows from the proof of \cite[Theorem 1.3]{LN20b}. A simple modification of this argument yields Proposition \ref{520}; we omit the details here. 
	\end{proof}

	\subsection{Proof of Theorem \ref{a}}\label{102}

	In this section we prove Theorem \ref{a}, from which Theorem \ref{f'} follows immediately.

	\begin{proof}[Proof of Theorem \ref{a}]
		We split the proof into three steps. In Step 1, under the assumption that $h$ satisfies (C1), we show that \eqref{2'} has at most one solution. In Step 2, we use a degree argument to prove the first statement in Theorem \ref{a}. In Step 3, we prove the second statement in Theorem \ref{a}. \medskip
		
		\noindent \textbf{Step 1:} Suppose that for fixed $\tau\in (0,1]$, $u$ and $v$ are two solutions to \eqref{2'}, and let $c\in\mathbb{R}$ be a constant such that $u\leq v + c$ on $M^n$ and $u(x) =  v(x) + c$ at some point $x\in M^n$. Since $f^\tau(\lambda(g_0^{-1}A_{g_{u}})) = f^\tau(\lambda(g_0^{-1}A_{g_{v+c}}))$, the strong comparison principle implies that $u=v+c$ on $M^n$. But this is compatible with (C1) if and only if $c=0$, and hence $u=v$. \medskip
		
		\noindent \textbf{Step 2:} In this step we prove the first statement in Theorem \ref{a}. By Proposition \ref{F}, either $g_{\hat{u}}$ is smooth and $\operatorname{Ric}_{g_{\hat{u}}} \equiv 0$, or $Y(M^n,[g_0])>0$. In the former case we are done, so suppose we are in the latter case. Without loss of generality, we assume that $R_{g_0}>0$.

		In contrast to the work of \cite{CD10, GV03, She08}, since our function is not assumed to satisfy the properness condition (C1), the continuity method is not applicable. We instead use a degree theory argument. Fix an arbitrary number $\alpha\in (0,1)$. Using the fact that $R_{g_0}>0$, we first fix $\delta>0$ for which $\lambda^\delta(g_0^{-1}A_{g_0})\in\Gamma_n^+$ and set $h_0= f^\delta(g_0^{-1}A_{g_0})>0$. Fix $T\in[\delta,1)$ and consider for $\tau\in[\delta,T]$, $g_{u_\tau}=e^{-2u_\tau}g_0,$ the equations
		\begin{equation}\label{ee1'}
		f^\tau\big(\lambda(g_0^{-1}A_{g_{u_\tau}})\big) = \frac{T-\tau}{T-\delta}h_0e^{2 u_\tau} + \frac{\tau-\delta}{T-\delta}h, \quad \lambda(g_0^{-1}A_{g_{u_\tau}}) \in \Gamma^\tau. 
		\end{equation}

		By Proposition \ref{E} and Remark \ref{r1}, there exists a positive constant $C$ such that every solution $u_\tau$ to \eqref{ee1'} with $\tau\in[\delta,T]$ satisfies $\|u_\tau\|_{C^{4,\alpha}(M^n)}\leq C/2$. For this constant $C$ and each $\tau\in[\delta,T]$, we then define
		\begin{equation}
		\mathcal{O}_\tau = \{u\in C^{4,\alpha}(M^n): \lambda(g_0^{-1}A_{g_u})\in\Gamma^\tau, \|u\|_{C^{4,\alpha}(M^n)}< C\}.
		\end{equation}
		Now denote the RHS of the equation in \eqref{ee1'} by $h^\tau$ and define
		\begin{equation*}
		F_\tau[x,u,\nabla u,\nabla^2 u] \defeq f^\tau(\lambda(g_0^{-1}A_{g_u}))(x) - h^\tau(x,u(x)),
		\end{equation*}
		so that solutions to \eqref{ee1'} are precisely the zeros of $F_\tau$. Then the degree $\operatorname{deg}(F_\tau,\mathcal{O}_\tau,0)$ in the sense of \cite{Li89} is well-defined an independent of $\tau\in[\delta,T]$. 
		
		We claim that $\operatorname{deg}(F_T,\mathcal{O}_T,0)=1$. By homotopy invariance, it suffices to show that $\operatorname{deg}(F_\delta,\mathcal{O}_\delta,0)=1$. To this end, first note that when $\tau=\delta$, $u_\delta\equiv0$ is the unique solution to \eqref{ee1'} with $\lambda(g_0^{-1}A_{g_{u_\delta}})\in\Gamma^\delta$, where uniqueness follows from Step 1. Therefore, by Propositions 2.3 and 2.4 in \cite{Li89}, to prove $\operatorname{deg}(F_\delta,\mathcal{O}_\delta,0)=1$, it suffices to show that the linearisation of $F_\delta$, as a mapping from $C^{2,\alpha}(M^n)$ to $C^\alpha(M^n)$, is invertible with no nonnegative eigenvalues. Indeed, for $u^s = u+s\phi$ we compute using \eqref{301}
		\begin{align}\label{43}
		\mathcal{L}^\delta(\phi) & \defeq \frac{d}{ds}\bigg|_{s=0} F_\delta[x,u^s, \nabla u^s, \nabla^2 u^s] = a_i^j (g_0^{-1}\nabla_{g_0}^2 \phi)^i_j + b^i(\nabla_{g_0} \phi)_i  + c\phi,
		\end{align}
		where $a^j_i = \delta L(g_0^{-1}A_{g_u,\delta})_i^j + (1-\delta)\sigma_1(L(g_0^{-1}A_{g_u,\delta}))\delta_i^j$ is positive definite by the ellipticity assumption in \eqref{D} (here $L$ denotes the linearisation of $f$), and $c = -\partial_u (h_0e^{2u})$ is negative. It follows that $\mathcal{L}^\delta$ is invertible as a mapping $\mathcal{L}^\delta:C^{2,\alpha}(M^n) \rightarrow C^{\alpha}(M^n)$ with no nonnegative eigenvalues, as required.

		We have shown that \eqref{ee1'} admits a solution for $\tau=T$, and since $T<1$ was arbitrary this completes the proof of the first statement in Theorem \ref{a}. \medskip

		\noindent \textbf{Step 3:} We now prove the second statement in Theorem \ref{a}. By Step 2, either there exists a smooth solution $g_{u_\tau}=e^{-2u_\tau}g_0$ to \eqref{2'} for each $\tau\in(0,1)$, or $g_{\hat{u}}$ is smooth with $\operatorname{Ric}_{g_{\hat{u}}} \equiv 0$. In the latter case, $g_{\hat{u}}$ clearly satisfies $\lambda(g_{\hat{u}}^{-1}A_{g_{\hat{u}}})\in\partial\Gamma$, and we are done.
		
		We now consider the former case. If there exists a $C^{1,1}$ metric $\hat{g}= e^{-2\hat{u}}g_0$ satisfying $\lambda({\hat{g}}^{-1}A_{\hat{g}})\in\partial\Gamma$ a.e.~on $M^n$, then the strong comparison principle implies that there is no smooth metric $g\in[g_0]$ satisfying $\lambda(g^{-1}A_g)\in\Gamma$ on $M^n$, and we are done. So suppose that there is no $C^{1,1}$ metric $\hat{g}= e^{-2\hat{u}}g_0$ satisfying $\lambda({\hat{g}}^{-1}A_{\hat{g}})\in\partial\Gamma$ a.e.~on $M^n$. As in Step 1 of the proof of Proposition \ref{E}, one obtains a uniform upper bound on solutions to \eqref{ee1'} uniformly in $\tau\in[\delta,1]$, and the first and second derivative estimates then follow uniformly in $\tau\in[\delta,1]$ by \cite{Che05}. Following the argument in Step 2 of Proposition \ref{E}, we then see that $\min_{M^n} u_\tau$ is bounded as $\tau\rightarrow 1$, otherwise one obtains a $C^{1,1}$ metric $\hat{g}= e^{-2\hat{u}}g_0$ satisfying $\lambda({\hat{g}}^{-1}A_{\hat{g}})\in\partial\Gamma$ a.e.~on $M^n$, a contradiction. The degree argument, as carried out in Step 2 above, therefore yields a smooth solution to \eqref{2'} with $\tau=1$. 
	\end{proof}

	\subsection{Proof of Theorem \ref{e}}\label{103}
	
	In this section we prove Theorem \ref{e}. Our solutions will be constructed as a suitably rescaled limit of solutions obtained in Theorem \ref{a}, therein taking $h(x,u) = e^{\beta u}$ and considering the limit $\beta\rightarrow 0^+$.

	\begin{proof}[Proof of Theorem \ref{e}]
		We only prove the first statement of Theorem \ref{e}, making use of the first statement in Theorem \ref{a}. The second statement in Theorem \ref{e} can be obtained in a similar way, by instead appealing to the second statement of Theorem \ref{a}. 
		
		By Theorem \ref{a}, either for each $\beta>0$ and $\tau\in(0,1)$ there exists a smooth solution $g_{u_{\tau,\beta}} = e^{-2u_{\tau,\beta}}g_0$ to \eqref{2'} with $h(x,z) = e^{\beta z}$, or $\hat{u}$ is smooth with $\operatorname{Ric}_{g_{\hat{u}}}\equiv 0$ on $M^n$. In the latter case we are done, so suppose we are in the former case. We fix $\tau\in(0,1)$ and henceforth write $u_\beta$ as shorthand for $u_{\tau,\beta}$. Letting $v_\beta = u_\beta - \bar{u}_\beta$, where $\bar{u}_\beta = \operatorname{Vol}(M^n,g_0)^{-1}\int_{M^n}u_\beta\,dv_{g_0}$, we see that
		\begin{equation}\label{31'}
		f^\tau\big(\lambda(g_0^{-1}A_{v_\beta})\big) = f^\tau\big(\lambda(g_0^{-1}A_{u_\beta})\big)  = e^{\beta u_\beta}.
		\end{equation}
		
		Let $p\in M^n$ be a maximum point for $u_\beta$ (equivalently, for $v_\beta$). As computed in Step 1 in the proof of Proposition \ref{E}, \eqref{31'} implies $e^{\beta u_\beta(p)} \leq C\sigma_1(\lambda(g_0^{-1}A_{g_0,\tau}))(p)$ and therefore
		\begin{equation}\label{32}
		e^{\beta u_\beta} \leq C \quad\text{on }M^n,
		\end{equation}
		where here and for the rest of the proof $C$ is a constant independent of $\beta$. As also discussed in the proof of Proposition \ref{E}, the \textit{a priori} first and second derivative estimates of \cite{Che05} on solutions $v_\beta$ to \eqref{31'} depend only on an upper bound for $e^{\beta u_\beta}$, and hence by \eqref{32} we have $|\nabla_{g_0} v_\beta|_{g_0}^2 + |\nabla_{g_0}^2 v_\beta|_{g_0} \leq C$. Since $v_\beta$ has zero average, we therefore have the full $C^2$ estimate
		\begin{equation}\label{33}
		\| v_\beta \|_{C^2(M^n,g_0)} \leq C. 
		\end{equation}
		
		Now, from \eqref{31'} we see that
		\begin{align}\label{31}
		f^\tau\big(\lambda(g_{v_\beta}^{-1}A_{v_\beta})\big) = e^{2v_\beta} 	f^\tau\big(\lambda(g_0^{-1}A_{v_\beta})\big) = e^{\beta\bar{u}_\beta} e^{(\beta+2) v_\beta }.
		\end{align}
		Moreover, since $\beta>0$, we can use Jensen's inequality to obtain from \eqref{32} the estimate
		\begin{equation*}
		e^{\beta\bar{u}_\beta} \leq C. 
		\end{equation*}
		If $\beta\bar{u}_\beta\rightarrow -\infty$ along some sequence $\beta\rightarrow 0$, then by \eqref{33}, \eqref{31} and Proposition \ref{520}, we get $C^{1,\alpha}$ convergence along a further subsequence of $v_\beta$ to some function $v_*\in C^{1,1}(M^n)$ satisfying
		\begin{equation*}
		\lambda(g_{v_*}^{-1}A_{g_{v_*}}) \in \partial\Gamma^\tau\quad  \text{ a.e. on }M^n,
		\end{equation*}
		which yields a contradiction exactly as in the proof of Proposition \ref{E}. Therefore, $\beta \bar{u}_\beta$ converges to some constant $c$ along a sequence $\beta\rightarrow 0$.  Again using \eqref{33} and \eqref{31}, we get $C^{1,\alpha}$ convergence of $v_\beta$ to some $v^*\in C^{1,1}(M^n)$, with $v^*$ satisfying
		\begin{equation}\label{34}
		f^\tau\big(\lambda(g_{v^*}^{-1}A_{g_{v^*}})\big) = e^{c} e^{2 v^*}, \quad \lambda(g_{v^*}^{-1}A_{g_{v^*}}) \in \Gamma^\tau  \quad \text{a.e. on }M^n. 
		\end{equation}
		By uniform ellipticity, $v^*$ is smooth and \eqref{34} is satisfied everywhere on $M^n$, which completes the existence part of the proof of the first statement with $\mu_\tau = e^{c}$. 
		
		We now prove the uniqueness part of Theorem \ref{e}. Assume for a contradiction that $\mu\not=\check{\mu}$ -- without loss of generality, we may assume that $\mu<\check{\mu}$. After adding a constant to one of our solutions if necessary, we may also assume that $u \leq \check{u}$ and $u(x) = \check{u}(x)$ for some $x\in M^n$. Then $f^\tau\big(\lambda(g_u^{-1}A_{g_u})\big) < f^\tau\big(\lambda(g_{\check{u}}^{-1}A_{g_{\check{u}}})\big)$ and the strong comparison principle implies $u=\check{u}$. This implies $\mu = \check{\mu}$, which is a contradiction.
	\end{proof}

	\subsection{A Kazdan-Warner type result}\label{p4}
	
	As a by-product of our method for establishing Theorem \ref{a}, we now prove a Kazdan-Warner type result which will be used in the proof of Theorem \ref{4'} in Section \ref{g2}. It is a simple consequence of the strong comparison principle that if $g\in[g_0]$ satisfies \eqref{79} and $\Gamma$ satisfies \eqref{A} and \eqref{B}, then every metric $\hat{g}\in[g_0]$ must satisfy $\lambda(\hat{g}^{-1}A_{\hat{g}}^t)\in\Gamma$ somewhere on $M^n$. We prove the following:

	\begin{thm}\label{201}
		Let $(M^n,g_0)$ be a smooth, closed Riemannian manifold of dimension $n\geq 3$ with $Y(M^n,[g_0])>0$, and suppose that $\Gamma$ satisfies \eqref{A} and \eqref{B}. Then there exists a smooth metric $g\in[g_0]$ satisfying $\lambda(g^{-1}A_g)\in\Gamma$ on $M^n$ if and only if no $C^{1,1}$ metric $g_{\hat{u}} = e^{-2\hat{u}}g_0$ with a.e.~nonnegative scalar curvature on $M^n$ is a solution to $\lambda(g_{\hat{u}}^{-1}A_{g_{\hat{u}}})\in\mathbb{R}^n\backslash\Gamma$ on $M^n$ in the viscosity sense. 
	\end{thm}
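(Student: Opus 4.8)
The plan is to prove the two implications separately. The forward implication is the ``simple consequence of the strong comparison principle'' noted above, and really only uses the scaling invariance of the Schouten tensor. Suppose $g=e^{-2u}g_0$ satisfies $\lambda(g^{-1}A_g)\in\Gamma$ on $M^n$ while $g_{\hat u}=e^{-2\hat u}g_0$ is a viscosity solution of $\lambda(g_{\hat u}^{-1}A_{g_{\hat u}})\in\mathbb{R}^n\backslash\Gamma$ on $M^n$. Then I would let $c=\min_{M^n}(\hat u-u)$, attained at some $x_0$, and use the smooth function $\phi:=u+c$ as a lower test function for $\hat u$ at $x_0$; the definition of viscosity supersolution gives $\lambda(g_\phi^{-1}A_{g_\phi})(x_0)\in\mathbb{R}^n\backslash\Gamma$. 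However $\phi$ and $u$ differ by a constant, so $A_{g_\phi}=A_{g_u}$ as $(0,2)$-tensors while $g_\phi^{-1}=e^{2c}g_u^{-1}$; hence $\lambda(g_\phi^{-1}A_{g_\phi})(x_0)=e^{2c}\lambda(g_u^{-1}A_{g_u})(x_0)\in\Gamma$ since $\Gamma$ is a cone, and since $\Gamma$ is open this is a contradiction. (The $C^{1,1}$ and nonnegative scalar curvature hypotheses on $g_{\hat u}$ are not needed here.)

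For the reverse implication I would adapt the degree argument used to prove Theorem \ref{a}. Since $Y(M^n,[g_0])>0$ I may assume $R_{g_0}>0$, and I fix a defining function $f$ for $\Gamma$ satisfying \eqref{C}, \eqref{D} as in \cite[Appendix A]{LN20}. Because $\lambda^0(g_0^{-1}A_{g_0})=\sigma_1(\lambda(g_0^{-1}A_{g_0}))e\in\Gamma_n^+$, by continuity there is $\delta\in(0,1)$ with $\lambda(g_0^{-1}A_{g_0})\in\Gamma^\delta$; I set $h_0:=f^\delta(\lambda(g_0^{-1}A_{g_0}))>0$ and consider, for $\tau\in[\delta,1]$, the family
\begin{equation*}
f^\tau\big(\lambda(g_0^{-1}A_{g_{u_\tau}})\big)=h_0e^{2u_\tau},\qquad \lambda(g_0^{-1}A_{g_{u_\tau}})\in\Gamma^\tau,
\end{equation*}
for which $u\equiv0$ is the unique solution when $\tau=\delta$ (uniqueness by Step 1 of the proof of Theorem \ref{a}, as $h_0e^{2z}$ satisfies (C1)).

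The crux is to establish a priori $C^{4,\alpha}$ bounds on solutions $u_\tau$ that are uniform over the \emph{closed} interval $[\delta,1]$. The upper bound $\sup_{M^n}u_\tau\le C$ comes from evaluating the equation at a maximum point of $u_\tau$, where the gradient terms vanish and $\nabla^2_{g_0}u_\tau\le0$, and using the concavity/homogeneity bound $f^\tau(\lambda)\le \tfrac{f(e)}{n}\sigma_1(\lambda^\tau)$ exactly as in Step 1 of Proposition \ref{E}; the estimates of Chen \cite{Che05} then give uniform first and second derivative bounds on the zero-average normalisations $\widetilde u_\tau$ of $u_\tau$. For the lower bound I would argue by contradiction as in Step 2 of Proposition \ref{E}: if $\min_{M^n}u_{\tau_i}\to-\infty$ along some $\tau_i\to\tau_*\in[\delta,1]$, the gradient bound forces $\max_{M^n}u_{\tau_i}\to-\infty$, the normalisations $\widetilde u_i$ converge in $C^{1,\alpha}$ to some $\widetilde u\in C^{1,1}(M^n)$, and the rescaled metrics $\widetilde g_i=e^{-2\widetilde u_i}g_0$ satisfy $f^{\tau_i}(\lambda(\widetilde g_i^{-1}A_{\widetilde g_i}))=h_0e^{2\widetilde u_i+2u_{\tau_i}}\to0$ uniformly; by Proposition \ref{520} and Remark \ref{84} the limit metric $\widetilde g=e^{-2\widetilde u}g_0$ then satisfies $\lambda(\widetilde g^{-1}A_{\widetilde g})\in\partial\Gamma^{\tau_*}$ a.e.\ on $M^n$. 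Here I would invoke the chain of inclusions $\Gamma=\Gamma^1\subseteq\Gamma^{\tau_*}\subseteq\Gamma_1^+$, which gives $\partial\Gamma^{\tau_*}\subseteq(\mathbb{R}^n\backslash\Gamma)\cap\overline{\Gamma_1^+}$, so that $\widetilde g$ is a $C^{1,1}$ metric with a.e.\ nonnegative scalar curvature which (again by Remark \ref{84}) solves $\lambda(\widetilde g^{-1}A_{\widetilde g})\in\mathbb{R}^n\backslash\Gamma$ in the viscosity sense, contradicting the hypothesis. This is the one place the hypothesis enters, replacing the appeals to \eqref{40}, the strong comparison principle, and Lemma \ref{506} made in the proof of Theorem \ref{a}. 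Evans--Krylov and Schauder then upgrade the uniform $C^2$ bound to the uniform $C^{4,\alpha}$ bound on $[\delta,1]$.

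With these estimates I would run the degree argument verbatim from Step 2 of the proof of Theorem \ref{a}. Setting $\mathcal O_\tau=\{u\in C^{4,\alpha}(M^n):\lambda(g_0^{-1}A_{g_u})\in\Gamma^\tau,\ \|u\|_{C^{4,\alpha}}<2C\}$ and $F_\tau[u]:=f^\tau(\lambda(g_0^{-1}A_{g_u}))-h_0e^{2u}$, the degree $\deg(F_\tau,\mathcal O_\tau,0)$ in the sense of \cite{Li89} is well defined and independent of $\tau\in[\delta,1]$, and at $\tau=\delta$ it equals $1$ since $u\equiv0$ is the unique solution and its linearisation has strictly negative zeroth-order coefficient $-2h_0$, hence no nonnegative eigenvalues (Propositions 2.3 and 2.4 of \cite{Li89}). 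By homotopy invariance $\deg(F_1,\mathcal O_1,0)=1$, yielding a smooth $u_1$ with $f(\lambda(g_0^{-1}A_{g_{u_1}}))=h_0e^{2u_1}$ and $\lambda(g_0^{-1}A_{g_{u_1}})\in\Gamma$; the metric $g=e^{-2u_1}g_0$ then satisfies $\lambda(g^{-1}A_g)\in\Gamma$ on $M^n$. I expect the main obstacle to be the uniform lower bound up to $\tau=1$: one must verify that the rescaled limit is exactly the type of metric forbidden by the hypothesis, which hinges on the cone inclusions above and on the equivalence of the a.e.\ and viscosity formulations for $C^{1,1}$ metrics; the remainder is a transcription of the proofs of Proposition \ref{E} and Theorem \ref{a}.
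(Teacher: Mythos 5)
Your proof is correct and follows essentially the same approach as the paper: you spell out the same touching argument for the forward direction (which the paper cites as ``the strong comparison principle''), and for the converse you run the same path of equations $f^\tau(\lambda(g_0^{-1}A_{g_{u_\tau}})) = h_0 e^{2u_\tau}$ with the same uniform a priori estimates, invoking the hypothesis in exactly the same place to preclude blow-up (the rescaled limit would be a $C^{1,1}$ metric of a.e.\ nonnegative scalar curvature that is a viscosity supersolution of $\lambda\in\mathbb{R}^n\backslash\Gamma$). The only cosmetic divergence is that you conclude with the degree argument from Theorem~\ref{a}, whereas the paper uses the continuity method (the set of solvable $\tau\in[\delta,1]$ is non-empty, open and closed), which is available here precisely because $h_0e^{2z}$ satisfies (C1) and hence solutions are unique.
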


	\begin{proof}
		Suppose that there exists a smooth metric $g\in[g_0]$ satisfying $\lambda(g^{-1}A_g)\in\Gamma$ on $M^n$. Then by the strong comparison principle, there is no LSC metric $g_{\hat{u}} = e^{-2\hat{u}}g_0$ satisfying $\lambda(g_{\hat{u}}^{-1}A_{g_{\hat{u}}})\in\mathbb{R}^n\backslash\Gamma$ in the viscosity sense on $M^n$. 
		
		For the converse, suppose that no $C^{1,1}$ metric $g_{\hat{u}} = e^{-2\hat{u}}g_0$ with a.e.~nonnegative scalar curvature on $M^n$ is a solution to $\lambda(g_{\hat{u}}^{-1}A_{g_{\hat{u}}})\in\mathbb{R}^n\backslash\Gamma$ on $M^n$ in the viscosity sense. Since $Y(M^n,[g_0])>0$, we may assume that $R_{g_0}>0$ and, as in the proof of Theorem \ref{a}, we fix $\delta>0$ for which $\lambda^\delta(g_0^{-1}A_{g_0})\in\Gamma_n^+$ and set $h_0 = f^\delta(g_0^{-1}A_{g_0})>0$. Instead of \eqref{ee1'}, we then consider the path of equations 
		\begin{equation}\label{202}
		f^\tau(\lambda(g_0^{-1}A_{g_{u_\tau}}))  = h_0 e^{2u_\tau}, \quad \lambda(g_0^{-1}A_{g_{u_\tau}})\in\Gamma^\tau
		\end{equation}
		for $\tau\in[\delta,1]$, where $g_{u_\tau} = e^{-2u_\tau}g_0$.

		For any $\alpha\in(0,1)$, let 
		\begin{equation*}
		\mathcal{U} = \{\tau\in[\delta,1]:\eqref{202}\text{ has a solution }u_\tau\in C^{2,\alpha}(M^n)\}. 
		\end{equation*}
		Since $u_\tau\equiv 0$ is a solution to \eqref{202} when $\delta=0$, $\mathcal{U}$ is non-empty. 
		
		We now show that $\mathcal{U}$ is closed. First note that \eqref{202} falls into the framework of Proposition \ref{E}. As explained in Step 1 in the proof of Proposition \ref{E}, solutions $u_\tau$ to \eqref{202} admit an upper bound uniformly in $\tau\in[\delta,1]$, and consequently first and second derivative estimates uniformly in $\tau\in[\delta,1]$ by \cite{Che05}. Suppose for a contradiction the uniform lower bound on solutions to \eqref{202} fails for some sequence $t_i\rightarrow t_0 \leq 1$. Then, as explained in Step 2 in the proof of Proposition \ref{E}, one obtains a $C^{1,1}$ metric $g_{\hat{u}} = e^{-2\hat{u}}g_0$ satisfying $\lambda(g_{\hat{u}}^{-1}A_{g_{\hat{u}}})\in\partial\Gamma^{t_0}\subseteq \mathbb{R}^n\backslash\Gamma$ in the viscosity sense on $M^n$, a contradiction. This establishes a uniform $C^2$ estimate on solutions to \eqref{202}, and hence by \cite{Ev82, Kry82}, a uniform $C^{2,\beta}$ estimate for any $\beta\in(0,1)$. It follows that $\mathcal{U}$ is closed.
		
		It remains to show that $\mathcal{U}$ is open. But this follows immediately from the computation in \eqref{43}, which implies that the linearised operator corresponding to \eqref{202} is invertible as a mapping from $C^{2,\alpha}(M^n)$ to $C^\alpha(M^n)$. 
		
		We have shown that $\mathcal{U}$ is non-empty, closed and open, and it follows that $\mathcal{U} = [\delta,1]$. 
	\end{proof}

	\section{Some improvements when $(1,0,\dots,0)\in\Gamma$}\label{601}

	In this section we consider the following question: if the metric $g_{\hat{u}}$ in \eqref{40} is not a solution to the degenerate equation $\lambda(g_{\hat{u}}^{-1}A_{g_{\hat{u}}})\in\partial\Gamma$ in the viscosity sense on $M^n$, then does there exist a smooth conformal metric $g\in[g_0]$ satisfying $\lambda(g^{-1}A_g)\in\Gamma$ on $M^n$? We provide a positive answer to this question when $(1,0,\dots,0)\in\Gamma$ (in this case, we can appeal to our strong comparison principle in Theorem \ref{t20}$'$). For completeness, we restate Theorem \ref{e''} here to include also the case that $\hat{u}$ is smooth with $\lambda(g_{\hat{u}}^{-1}A_{g_{\hat{u}}})\in\Gamma$ on $M^n$:
	
	\begin{customthm}{\ref{e''}$'$}\label{e'}
		\textit{In addition to the hypotheses of Theorem \ref{e}, suppose also that one of the following conditions hold:\medskip 
			\begin{enumerate}
				\item[1.] $\hat{u}$ is smooth with $\lambda(g_{\hat{u}}^{-1}A_{g_{\hat{u}}})\in\Gamma$ on $M^n$, or \medskip
				\item[2.] $(1,0,\dots,0)\in\Gamma$ and $g_{\hat{u}}$ is not a solution to $\lambda(g_{\hat{u}}^{-1}A_{g_{\hat{u}}})\in\partial\Gamma$ on $M^n$ in the viscosity sense.\medskip
			\end{enumerate} 
			Then for all $\tau\in(0,1]$ there exists a constant $\mu_\tau>0$ and $u_\tau\in C^\infty(M^n)$ satisfying \eqref{5}. Moreover, if $(u_\tau,\mu_\tau)$, $(\check{u}_\tau,\check{\mu}_\tau) \in C^\infty(M^n)\times (0,\infty)$ both satisfy \eqref{5} then $\check{\mu}_\tau = \mu_\tau$ and $\check{u}_\tau=u_\tau+c$ for some constant $c\in\mathbb{R}$. }
	\end{customthm}
	
	As a means for establishing Theorem \ref{e'}, we will first prove the following refinement of the second statement in Theorem \ref{a}: 
	
	\begin{thm}\label{a3}
		In addition to the hypotheses of Theorem \ref{a}, suppose also that Condition 1 or 2 in Theorem \ref{e'} holds. Then for all $\tau\in(0,1]$ there exists a smooth solution $g_{u_\tau}=e^{-2u_\tau}g_0$ to \eqref{2'}. Moreover, for all $0<\delta<1$ and $\alpha\in(0,1)$, there exists a constant $C>0$ depending only on $n,g_0,\delta,\alpha$ and $h$ such that solutions to \eqref{2'} with $\tau\in[\delta,1]$ satisfy
		\begin{equation}\label{14b}
		\| u_\tau \|_{C^{4,\alpha}(M^n,g_0)} \leq C_2. 
		\end{equation}
		If $h$ also satisfies (C1), then solutions to \eqref{2'} are unique. 
	\end{thm}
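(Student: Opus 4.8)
The plan is to obtain Theorem~\ref{a3} as an immediate consequence of the second statement of Theorem~\ref{a}. By that statement, a smooth solution $g_{u_\tau}=e^{-2u_\tau}g_0$ to \eqref{2'} exists for every $\tau\in(0,1]$, together with the uniform bound \eqref{14b} on $\tau\in[\delta,1]$ for each $\delta\in(0,1)$, $\alpha\in(0,1)$, precisely when there is no $C^{1,1}$ metric $g$ conformal to $g_0$ satisfying $\lambda(g^{-1}A_g)\in\partial\Gamma$ a.e.\ on $M^n$; and in that case uniqueness under (C1) is the final assertion of Theorem~\ref{a}. So the entire task reduces to showing that each of Conditions~1 and~2 of Theorem~\ref{e'} rules out the existence of such a degenerate $C^{1,1}$ metric.

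Suppose, for contradiction, that $\tilde g=e^{-2\tilde u}g_0$ is $C^{1,1}$ and satisfies $\lambda(\tilde g^{-1}A_{\tilde g})\in\partial\Gamma$ a.e.\ on $M^n$. Since $\Gamma$ is open, $\partial\Gamma\subset\mathbb{R}^n\setminus\Gamma$, so $\lambda(g_{\tilde u}^{-1}A_{g_{\tilde u}})\in\mathbb{R}^n\setminus\Gamma$ a.e.\ on $M^n$, and hence by Remark~\ref{84} the function $\tilde u$ is a viscosity supersolution of $\lambda(g^{-1}A_g)\in\partial\Gamma$ on $M^n$, i.e.\ it satisfies \eqref{81}. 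Because $A_{g_{v+c}}=A_{g_v}$ as a $(0,2)$-tensor and $g_{v+c}^{-1}=e^{2c}g_v^{-1}$ for any constant $c$, and $\Gamma$ (hence $\mathbb{R}^n\setminus\Gamma$ and $\partial\Gamma$) is a cone, both this supersolution property and the a.e.\ membership of the eigenvalues in $\partial\Gamma$ are unchanged by adding constants to $\tilde u$. I would then set $c=\max_{M^n}(\hat u-\tilde u)$, attained at some $x_0\in M^n$, so that $\hat u\le\tilde u+c$ on $M^n$ with equality at $x_0$, and $\tilde u+c$ is still a viscosity supersolution.

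Under Condition~1, $\hat u$ is smooth, so it is an admissible $C^2$ test function touching the supersolution $\tilde u+c$ from below at $x_0$; the definition of viscosity supersolution then forces $\lambda(g_{\hat u}^{-1}A_{g_{\hat u}})(x_0)\in\mathbb{R}^n\setminus\Gamma$, contradicting the hypothesis $\lambda(g_{\hat u}^{-1}A_{g_{\hat u}})(x_0)\in\Gamma$ (recall $\Gamma$ is open, so these sets are disjoint). Under Condition~2 we have $(1,0,\dots,0)\in\Gamma$, so I would invoke the strong comparison principle in the form of Theorem~\ref{t20}$'$ on $\Omega=M^n$, with $u_1=\hat u$ (a viscosity subsolution of $\lambda(g^{-1}A_g)\in\partial\Gamma$ by \eqref{40}) and $u_2=\tilde u+c\in C^{1,1}_{\operatorname{loc}}(M^n)$ (an a.e.\ supersolution): from $u_1\le u_2$ and $u_1(x_0)=u_2(x_0)$ the alternative $u_1<u_2$ is impossible, so $\hat u\equiv\tilde u+c$. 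Then $g_{\hat u}$ is itself $C^{1,1}$ with $\lambda(g_{\hat u}^{-1}A_{g_{\hat u}})\in\partial\Gamma$ a.e.\ on $M^n$, hence by Remark~\ref{84} a viscosity solution of $\lambda(g^{-1}A_g)\in\partial\Gamma$ on $M^n$ --- contradicting the assumption in Condition~2 that $g_{\hat u}$ is not such a solution. In either case no degenerate $C^{1,1}$ metric exists, and Theorem~\ref{a3} (existence for all $\tau\in(0,1]$, the estimate \eqref{14b}, and uniqueness under (C1)) follows from Theorem~\ref{a}.

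The new input beyond Theorems~\ref{a} and~\ref{t20}$'$ is confined to the exclusion argument above, and within it the only genuinely delicate ingredient is the use of Theorem~\ref{t20}$'$ under Condition~2: this is the regime corresponding to $\tau=1$, where the ordinary comparison principle is false (see \cite{LN09}), so Condition~2's hypothesis $(1,0,\dots,0)\in\Gamma$ --- which is exactly what makes the strong comparison principle available --- is essential and cannot be removed by this method. Everything else (the touching/test-function argument under Condition~1, the invariance under additive constants, and the reduction to Theorem~\ref{a}) is routine.
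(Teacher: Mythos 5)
Your proposal is correct, and it takes a cleaner route than the paper. The paper proves Theorem~\ref{a3} by first establishing Proposition~\ref{E2} (extending the \textit{a priori} estimates of Proposition~\ref{E} to $\tau\in[\delta,1]$ under Condition~1 or~2, which again goes via a blow-up argument plus the strong comparison principle) and then rerunning the degree argument of Theorem~\ref{a} with $T=1$. You instead observe that the second statement of Theorem~\ref{a} already contains all the analytic work: it says existence holds for every $\tau\in(0,1]$, with the uniform bound \eqref{14b}, precisely when no $C^{1,1}$ conformal metric satisfies $\lambda(g^{-1}A_g)\in\partial\Gamma$ a.e., and the proof of that statement (Step~3) is where the blow-up extraction, estimates, and degree argument already live. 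Reducing Theorem~\ref{a3} to the single assertion that Conditions~1 and~2 each exclude such a degenerate metric factors the argument more efficiently and avoids duplicating the \textit{a priori} estimate discussion. Your exclusion argument itself is sound: under Condition~1 the touching of the smooth strict subsolution $\hat u$ against the $C^{1,1}$ supersolution $\tilde u + c$ at a maximum point of $\hat u - \tilde u$ gives an immediate contradiction, with no use of the SCP of Section~2 (and indeed none is available at $\tau=1$ without extra hypotheses); under Condition~2 you correctly identify that $(1,0,\dots,0)\in\Gamma$ is exactly what unlocks Theorem~\ref{t20}$'$, forcing $\hat u \equiv \tilde u + c$ and hence contradicting the assumption that $g_{\hat u}$ is not a viscosity solution to $\lambda\in\partial\Gamma$. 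This is the same SCP application the paper makes inside Proposition~\ref{E2}; you have simply repositioned it. One minor remark: when you invoke Theorem~\ref{t20}$'$ directly with $\Omega=M^n$, it is worth noting that the paper instead applies the SCP on small balls and then runs the standard open--closed argument to conclude $\mathcal{C}=M^n$ (Step~2 of Proposition~\ref{E}); the proof of Theorem~\ref{t20} does go through for $\Omega$ any connected open set including all of $M^n$, so your shortcut is fine, but spelling out the connectedness step would match the paper's level of care.
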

	
	As an immediate corollary of Theorem \ref{a3} and Proposition \ref{60} in Appendix \ref{appb}, we obtain the following:

	\begin{cor}\label{f''}
		Let $(M^n,g_0)$ be a smooth, closed Riemannian manifold of dimension $n\geq 3$ and suppose $2\leq k \leq n$. Suppose for some fixed $t<1$ there exists a metric $g_{\hat{u}} = e^{-2\hat{u}}g_0$, $\hat{u}\in C^0(M^n)$, satisfying $\lambda(g_{\hat{u}}^{-1}A_{g_{\hat{u}}}^t)\in\overline{\Gamma_k^+}$ in the viscosity sense on $M^n$, but such that $g_{\hat{u}}$ is not a solution to $\lambda(g_{\hat{u}}^{-1}A_{g_{\hat{u}}}^t)\in\partial\Gamma_k^+$ in the viscosity sense on $M^n$. Then there exists a smooth metric $g_t\in[g_0]$ satisfying $\lambda(g_t^{-1}A_{g_t}^t)\in\Gamma_k^+$ on $M^n$.
	\end{cor}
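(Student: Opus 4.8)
The plan is to reduce Corollary \ref{f''} to Theorem \ref{a3} applied to a suitably enlarged cone, via the substitution relating $A^t$ and $A$. First I would record the elementary pointwise identity behind the footnote following \eqref{44}: setting $\tau = \tau(t) = (1 + \frac{1-t}{n-2})^{-1}$ --- which lies in $(0,1)$ precisely because $t<1$ --- one has $\lambda(g^{-1}A_g^t) = \tau^{-1}\,\lambda(g^{-1}A_g)^\tau$ for every $C^2$ metric $g$, where $\lambda^\tau = \tau\lambda + (1-\tau)\sigma_1(\lambda)e$. Hence, writing $\widehat{\Gamma} := (\Gamma_k^+)^\tau$ and using that $\lambda\mapsto\lambda^\tau$ is a linear homeomorphism of $\mathbb{R}^n$, we get $\lambda(g^{-1}A_g^t)\in\Gamma_k^+$ (resp.\ $\overline{\Gamma_k^+}$, $\partial\Gamma_k^+$) if and only if $\lambda(g^{-1}A_g)\in\widehat{\Gamma}$ (resp.\ $\overline{\widehat{\Gamma}}$, $\partial\widehat{\Gamma}$). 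Since the viscosity conditions of Definition \ref{311} are tested only against $C^2$ functions, this pointwise equivalence promotes verbatim to the viscosity level, so the hypotheses of Corollary \ref{f''} say exactly that $g_{\hat{u}}$ satisfies \eqref{40} for $\widehat{\Gamma}$ and is not a viscosity solution of $\lambda(g_{\hat{u}}^{-1}A_{g_{\hat{u}}})\in\partial\widehat{\Gamma}$.

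Next I would verify that $\widehat{\Gamma}$ satisfies all the structural hypotheses needed to invoke Theorem \ref{a3}. Because $\tau<1$, the cone $\widehat{\Gamma}$ inherits \eqref{A} and \eqref{B} from $\Gamma_k^+$ and satisfies $\Gamma_k^+\subsetneq\widehat{\Gamma}$; in particular $(1,0,\dots,0)\in\widehat{\Gamma}$, which follows from Proposition \ref{60} (with $\widehat{\Gamma} = (\Gamma_k^+)^\tau$, $\Gamma_k^+\subset\widehat{\Gamma}$, $\tau<1$), or directly since $(1,0,\dots,0)^\tau = (1,1-\tau,\dots,1-\tau)\in\Gamma_n^+\subseteq\Gamma_k^+$. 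Also $\widehat{\Gamma}\neq\Gamma_1^+$: since $\mu\mapsto\mu^\tau$ is a linear bijection carrying $(\Gamma_1^+)^\tau = \Gamma_1^+$ onto $\Gamma_1^+$, the equality $\widehat{\Gamma} = (\Gamma_k^+)^\tau = \Gamma_1^+ = (\Gamma_1^+)^\tau$ would force $\Gamma_k^+ = \Gamma_1^+$, which is false for $k\geq 2$. Fixing any defining function $f$ for $\widehat{\Gamma}$ obeying \eqref{C} and \eqref{D} (which exists by \cite[Appendix A]{LN20}) and taking $h(x,z) = e^{z}$ (which satisfies (C1)--(C3)), Condition 2 of Theorem \ref{e'} and all remaining hypotheses of Theorem \ref{a3} hold with $\widehat{\Gamma}$ in the role of $\Gamma$.

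Then I would apply Theorem \ref{a3} at parameter value $1$: it yields $u\in C^\infty(M^n)$ with $\lambda(g_0^{-1}A_{g_u})\in\widehat{\Gamma}$ on $M^n$, hence (as $\widehat{\Gamma}$ is a cone) $\lambda(g_u^{-1}A_{g_u})\in\widehat{\Gamma}$, and unwinding the equivalence from the first paragraph gives $\lambda(g_u^{-1}A_{g_u}^t)\in\Gamma_k^+$ on $M^n$. Thus $g_t := e^{-2u}g_0\in[g_0]$ is the desired smooth metric. One could equally invoke Theorem \ref{e''}/\ref{e'} at $\tau=1$ in place of Theorem \ref{a3}.

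I do not expect any serious obstacle here: all the substantive work --- in particular the strong comparison principle of Theorem \ref{t20}, which powers the delicate $C^0$-estimate inside Theorem \ref{a3} --- is already done. The only points demanding a little care are the faithful transfer of the $\overline{\Gamma_k^+}$- and $\partial\Gamma_k^+$-conditions through the substitution $A_g^t\leftrightarrow A_g$ at the viscosity level, together with the two elementary facts $(1,0,\dots,0)\in\widehat{\Gamma}$ and $\widehat{\Gamma}\neq\Gamma_1^+$.
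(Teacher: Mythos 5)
Your proposal is correct and follows exactly the route the paper intends (it states Corollary \ref{f''} as "an immediate corollary of Theorem \ref{a3} and Proposition \ref{60}"): replace $\Gamma_k^+$ by $\widehat\Gamma=(\Gamma_k^+)^{\tau(t)}$, observe via the footnoted identity that the $A^t$-hypotheses on $g_{\hat u}$ transfer to $A$-hypotheses for $\widehat\Gamma$ at the viscosity level, note $(1,0,\dots,0)\in\widehat\Gamma$ by Proposition \ref{60} (or directly) and $\widehat\Gamma\neq\Gamma_1^+$, and apply Theorem \ref{a3} at parameter value $1$. The details you supply — in particular the checks that $\tau(t)\in(0,1)$, that $\widehat\Gamma$ inherits \eqref{A}, \eqref{B}, that $(\Gamma_1^+)^\tau=\Gamma_1^+$ forces $\widehat\Gamma\neq\Gamma_1^+$, and that the pointwise equivalence persists in the viscosity sense because the test functions are $C^2$ — are exactly the ones needed, and none of them contain a gap.
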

	
	At the end of this section, we will also prove the uniqueness and regularity result stated in Corollary \ref{505'} in the introduction.

	The first step in the proof of Theorem \ref{a3} is to extend the \textit{a priori} estimates of Proposition \ref{E} to $\tau=1$, under either Condition 1 or 2 in Theorem \ref{e'}. We state these estimates as a separate result: 
	
	\begin{prop}\label{E2}
		In addition to the hypotheses of Proposition \ref{E}, suppose also that Condition 1 or 2 in Theorem \ref{e'} holds. Then for all $0<\delta<1$ and $\alpha\in(0,1)$, there exists a constant $C>0$ depending only on $n,g_0,\delta,\alpha$ and $h$ such that solutions $u_\tau$ to \eqref{2'} with $\tau\in[\delta,1]$ satisfy
		\begin{equation}\label{14'}
		\| u_\tau \|_{C^{4,\alpha}(M^n,g_0)} \leq C_2. 
		\end{equation}
	\end{prop}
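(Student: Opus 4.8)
The plan is to follow the proof of Proposition \ref{E} as closely as possible, isolating the one place where $\tau<1$ was used — the appeal to the strong comparison principle (Theorem \ref{t20}), which fails at $\tau=1$ — and to circumvent it under Condition 1 or Condition 2. First I would observe that everything in Step 1 of the proof of Proposition \ref{E} is already uniform in $\tau\in[\delta,1]$: the maximum-principle argument at a maximum point of $u_\tau$ together with (C3) gives a uniform upper bound for $\tau\in[\delta,1]$; Chen's estimates \cite{Che05} then give $|\nabla_{g_0}^2 u_\tau|_{g_0}+|\nabla_{g_0}u_\tau|_{g_0}^2\le C$ uniformly for $\tau\in[\delta,1]$ via (C2); and Evans--Krylov together with Schauder upgrade this to \eqref{14'}. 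Thus Proposition \ref{E2} reduces to proving a lower $C^0$ bound on solutions to \eqref{2'} that is uniform in $\tau\in[\delta,1]$.

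To prove the lower bound I would argue by contradiction as in Step 2 of the proof of Proposition \ref{E}: if it fails there is a sequence $t_i\to t\in[\delta,1]$ with solutions $u_i$ and $\min_{M^n}u_i\to-\infty$. When $t<1$ the argument of Proposition \ref{E} goes through verbatim (Theorem \ref{t20} is available at the limiting exponent $t$) and yields $\operatorname{Ric}_{g_{\hat u}}\equiv 0$, contradicting $Y(M^n,[g_0])>0$; so I may assume $t=1$. Rescaling $\widetilde u_i=u_i-\bar u_i$ and using the uniform $C^2$ bound from Step 1, a subsequence converges in $C^{1,\alpha}$ to some $\widetilde u\in C^{1,1}(M^n)$; since the right-hand side of the rescaled equation \eqref{t15} tends to zero uniformly by (C2), Proposition \ref{520} (applicable since $t\in(0,1]$) gives $\lambda(g_{\widetilde u}^{-1}A_{g_{\widetilde u}})\in\partial\Gamma^1=\partial\Gamma$ in the viscosity sense, hence a.e.\ on $M^n$ by Remark \ref{84}. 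It remains to contradict the existence of such a $\widetilde u$.

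Under Condition 1, $\widetilde u\in C^{1,1}$ with $\lambda(g_{\widetilde u}^{-1}A_{g_{\widetilde u}})\in\partial\Gamma\subset\mathbb{R}^n\backslash\Gamma$ a.e.\ is in particular a viscosity supersolution of $\lambda(g^{-1}A_g)\in\partial\Gamma$ (Remark \ref{84}). Taking $c=\max_{M^n}(\hat u-\widetilde u)$, attained at some $x_0$, the smooth function $\phi=\hat u-c$ satisfies $\widetilde u\ge\phi$ near $x_0$ with equality at $x_0$, so the supersolution property gives $\lambda(g_\phi^{-1}A_{g_\phi})(x_0)\in\mathbb{R}^n\backslash\Gamma$; since constants contribute nothing to \eqref{501} we have $g_\phi^{-1}A_{g_\phi}=e^{-2c}g_{\hat u}^{-1}A_{g_{\hat u}}$, and as $\Gamma$ is a cone this forces $\lambda(g_{\hat u}^{-1}A_{g_{\hat u}})(x_0)\in\mathbb{R}^n\backslash\Gamma$, contradicting Condition 1. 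Under Condition 2, $(1,0,\dots,0)\in\Gamma$, so Theorem \ref{t20}$'$ is available. With the same $c$, the $C^{1,1}$ metric $e^{-2(\widetilde u+c)}g_0$ satisfies $\lambda\in\partial\Gamma\subset\mathbb{R}^n\backslash\Gamma$ a.e., while $\hat u$ satisfies $\lambda(g_{\hat u}^{-1}A_{g_{\hat u}})\in\overline{\Gamma}$ in the viscosity sense by \eqref{40} and $\hat u\le\widetilde u+c$ with equality somewhere; Theorem \ref{t20}$'$ then forces $\hat u\equiv\widetilde u+c$, so $\hat u\in C^{1,1}$ with $\lambda(g_{\hat u}^{-1}A_{g_{\hat u}})\in\partial\Gamma$ a.e., i.e.\ (Remark \ref{84}) $\hat u$ is a viscosity solution of $\lambda(g^{-1}A_g)\in\partial\Gamma$, contradicting Condition 2. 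Either way the lower bound holds, and combined with the first paragraph this gives \eqref{14'}.

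I expect the main obstacle to be conceptual rather than computational: it is exactly the failure of the strong comparison principle at $\tau=1$ (noted after Theorem \ref{t20}), so the real content is that Conditions 1 and 2 are precisely the two situations in which the degenerate limit metric $g_{\widetilde u}$ produced by a hypothetical blow-down can still be excluded — in case 1 by a direct touching argument against the strict subsolution $g_{\hat u}$, and in case 2 by the borderline strong comparison principle Theorem \ref{t20}$'$, which holds because $(1,0,\dots,0)\in\Gamma$.
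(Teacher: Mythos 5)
Your proposal is correct and follows essentially the same route as the paper's proof: Step 1 of Proposition \ref{E} already gives the upper $C^0$ and second-derivative bounds uniformly for $\tau\in[\delta,1]$, so the work is in the lower bound as $\tau\to 1$, which you handle by the same blow-down to a $C^{1,1}$ metric $\widetilde g$ with $\lambda(\widetilde g^{-1}A_{\widetilde g})\in\partial\Gamma$ a.e. and then rule it out under Condition 1 or 2. The only differences are cosmetic: under Condition 1 you spell out the single-point touching argument rather than invoking ``the strong comparison principle'' as shorthand, and under Condition 2 you first conclude $\hat u=\widetilde u+c$ via Theorem \ref{t20}$'$ and then cite Remark \ref{84} to see $\hat u$ is a viscosity solution of $\lambda\in\partial\Gamma$, whereas the paper reaches the same contradiction by pushing the test function from Condition 2 through to $\widetilde u$; both are equivalent.
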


	\begin{proof}[Proof of Proposition \ref{E2}]
		We first recall from Step 1 in the proof of Proposition \ref{E} that solutions to \eqref{2'} admit an upper $C^0$ bound and upper first and second derivative bounds uniformly in $\tau\in[\delta,1]$, and that neither Condition 1 nor 2 in the statement of Theorem \ref{e'} is needed here. By Step 2 in the proof of Proposition \ref{E}, for any $T<1$, solutions to \eqref{2'} for $\tau\in[\delta,T]$ also admit a lower bound; under only the hypotheses of Proposition \ref{E}, this lower bound may depend on $T$. To prove Proposition \ref{E2}, we only need to prove that the lower bound on solutions to \eqref{2'} holds uniformly as $\tau\rightarrow 1$, assuming either Condition 1 or 2. As before, the $C^{4,\alpha}$ estimate then follows from the theory of Evans-Krylov \cite{Ev82, Kry82} and classical Schauder estimates. 
		
		Let us first assume Condition 1, i.e.~we assume that there exists a smooth metric $g_{\hat{u}} = e^{-2\hat{u}}g_0$ satisfying $\lambda(g_{\hat{u}}^{-1}A_{g_{\hat{u}}})\in\Gamma$ on $M^n$. We suppose for a contradiction that the uniform lower bound on solutions to \eqref{2'} fails for some sequence $\tau_i\rightarrow 1$. Then, following the reasoning in Step 2 in the proof of Proposition \ref{E}, we obtain a $C^{1,1}$ metric $\widetilde{g} = e^{-2\widetilde{u}}g_0$ satisfying 
		\begin{equation}\label{t21'}
		\lambda(\widetilde{g}^{-1}A_{\widetilde{g}})\in\partial\Gamma \quad\text{a.e. on }M^n.
		\end{equation}
		By the strong comparison principle, this contradicts the fact that $g_{\hat{u}} = e^{-2\hat{u}}g_0$ is a smooth metric satisfying $\lambda(g_{\hat{u}}^{-1}A_{g_{\hat{u}}})\in\Gamma$ on $M^n$. Thus, the desired lower bound is established.
		
		Now we assume Condition 2, i.e.~we assume that $(1,0,\dots,0)\in\Gamma$, $g_{\hat{u}}$ satisfies \eqref{40} (that is, $\lambda(g_{\hat{u}}^{-1}A_{g_{\hat{u}}})\in\overline{\Gamma}$ in the viscosity sense on $M^n$) but $g_{\hat{u}}$ is not a solution to $\lambda(g_{\hat{u}}^{-1}A_{g_{\hat{u}}})\in\partial\Gamma$ in the viscosity sense on $M^n$. Assuming again for a contradiction that the lower bound on solutions to \eqref{2'} fails for a sequence $\tau_i\rightarrow1$, we obtain as before a function $\widetilde{u}\in C^{1,1}(M^n)$ satisfying \eqref{t21'}. Let $c\in\mathbb{R}$ be a constant such that $\hat{u} \leq \widetilde{u} + c$ on $M^n$ and $\hat{u}(x) = \widetilde{u}(x) + c$ for some $x\in M^n$. As in Step 2 in the proof of Proposition \ref{E}, the strong comparison principle in Theorem \ref{t20}$'$ then yields $\hat{u} = \widetilde{u} + c$ on $M^n$.

		On the other hand, by Condition 2 there exist a point $p$ and a function $\phi \in C^2(M^n)$ that touches $\hat{u}$ from below at $p$ such that $\lambda(g_{\phi}^{-1}A_{g_\phi})(p)\in\Gamma$. It follows that $\phi-c$ touches $\hat{u} - c = \widetilde{u}$ from below at $p$, and since $\widetilde{u}$ satisfies $\lambda(\widetilde{g}^{-1}A_{\widetilde{g}})\in\partial\Gamma$ in the viscosity sense on $M^n$, we therefore have $\lambda(g_{\phi-c}^{-1}A_{g_{\phi-c}})(p)\in\mathbb{R}^n\backslash\Gamma$. But this is equivalent to $\lambda(g_{\phi}^{-1}A_{g_\phi})(p)\in\mathbb{R}^n\backslash\Gamma$, a contradiction. This establishes the desired lower bound. 
	\end{proof}

	We now complete the proofs of Theorem \ref{a3}, Theorem \ref{e'} and Corollary \ref{505'}:
	
	\begin{proof}[Proof of Theorem \ref{a3}]
		The proof is identical to that of Theorem \ref{a}, except one takes $T=1$ in Step 2 and applies Proposition \ref{E2} instead of Proposition \ref{E}. 
	\end{proof}
	
	\begin{proof}[Proof of Theorem \ref{e'}]
		The proof is identical to that of Theorem \ref{e}, except one takes $u_\beta$ to be the solutions obtained in Theorem \ref{a3} with $h(x,u) = e^{\beta u}$. 
	\end{proof}
	
	\begin{proof}[Proof of Corollary \ref{505'}]
		Suppose that there exists a continuous viscosity solution $g_u = e^{-2u}g_0$ to the equation $\lambda(g_u^{-1}A_{g_u})\in\partial\Gamma$ on $M^n$. As a consequence of the strong comparison principle, it follows that there is no smooth metric $g\in[g_0]$ satisfying $\lambda(g^{-1}A_g)\in\Gamma$ on $M^n$. By the second statement in Theorem \ref{a}, it follows that there exists a $C^{1,1}$ metric $g=e^{-2w}g_0$ satisfying $\lambda(g^{-1}A_g)\in\partial\Gamma$ a.e.~on $M^n$. 
		
		Let $c\in\mathbb{R}$ and $x\in M^n$ be such that $u\leq w + c$ on $M^n$ and $u(x) = w(x) + c$. Following the argument in Step 2 in the proof of Proposition \ref{E}, the strong comparison principle in Theorem \ref{t20}$'$ then implies that $u = w + c$. In particular, this establishes the $C^{1,1}$ regularity of $u$. Now, if $g_v = e^{-2v}g_0$ is another continuous metric satisfying $\lambda(g_v^{-1}A_{g_v})\in\partial\Gamma$ in the viscosity sense on $M^n$, then the same argument yields $v = w + d$ on $M^n$ for some constant $d$, and therefore $u = v + c - d$ on $M^n$. This establishes the uniqueness statement.
	\end{proof}

	\section{Geometric applications}\label{g2}

	\subsection{The case $\Gamma = \Gamma_2^+$ and Ricci pinching (Theorem \ref{d'})}\label{800}
	
	As discussed in the introduction, part of our motivation for establishing Theorem \ref{f'} stems from earlier work on the existence of conformal metrics satisfying $\lambda(g^{-1}A_{g}^t)\in\Gamma_2^+$ on $M^n$. In \cite{CGY02a}, Chang, Gursky \& Yang established the existence of a metric $g\in[g_0]$ with $\lambda(g^{-1}A_g)\in\Gamma_2^+$ on any closed 4-manifold satisfying
	\begin{equation}\label{21}
	Y(M^4,[g_0])>0\quad \text{and} \quad \int_{M^4}\sigma_2(\lambda(g_0^{-1}A_{g_0}))\,dv_{g_0}>0. 
	\end{equation} 
	Note that, in light of the Chern-Gauss-Bonnet formula in four dimensions, the integral in \eqref{21} is conformally invariant.
	
	Ge, Lin \& Wang \cite{GLW10} later established a similar result on closed 3-manifolds: if 
	\begin{equation}\label{117}
	R_{g_0}>0\quad \text{and}\quad \int_{M^3}\sigma_2(\lambda(g_0^{-1}A_{g_0}))\,dv_{g_0}>0,
	\end{equation}
	then there exists a conformal metric $g\in[g_0]$ satisfying $\lambda(g^{-1}A_g)\in\Gamma_2^+$ on $M^3$. In fact, the authors obtain (see Case 1 of Theorem 2 therein) such a conformal metric in any dimension, assuming positivity of the following nonlinear Yamabe-type invariant:
	\begin{equation*}
	Y_{2,1}([g_0]) \defeq  \begin{cases}
	\begin{aligned}
	&\sup_{g\in[g_0],\,R_g>0} \int_{M^3}\sigma_2(\lambda(g^{-1}A_g))\,dv_g\int_{M^3}\sigma_1(\lambda(g^{-1}A_g))\,dv_g & \quad  \text{if }n=3\,\, \\[2pt]
	&\int_{M^4}\sigma_2(\lambda(g^{-1}A_g))\,dv_g & \quad \text{if }n=4\,\, \\[2pt]
	&\inf_{g\in[g_0],\,R_g>0}\frac{\int_{M^n}\sigma_2(\lambda(g^{-1}A_g))\,dv_g}{(\int_{M^n}\sigma_1(\lambda(g^{-1}A_g))\,dv_g)^\frac{n-4}{n-2}} & \quad \text{ if }n\geq 5,
	\end{aligned}
	\end{cases}
	\end{equation*} 
	with the convention that $Y_{2,1}([g_0])=-\infty$ if $Y(M^n,[g_0])\leq 0$. When $Y_{2,1}([g_0])=0$, they also established (see Case 2 of Theorem 2 therein) the existence of a $C^{1,1}$ metric $g$ conformal to $g_0$ satisfying $\lambda(g^{-1}A_g)\in\partial{\Gamma_2^+}$ a.e.~on $M^n$. See also \cite{CD10} in the case that $R_{g_0}>0$ and $\int_{M^3}\sigma_2(g_0^{-1}A_{g_0})\,dv_{g_0}\geq 0$. 
	
	We now give the proof of Theorem \ref{d'}: 
	
	\begin{proof}[Proof of Theorem \ref{d'}]
		The assumption $Y_{2,1}([g_0]) = 0$ implies, in particular, that the Yamabe invariant $Y(M^n,[g_0])$ is positive. By \cite[Theorem 2]{GLW10}, the assumption $Y_{2,1}([g_0]) = 0$ also implies the existence of a $C^{1,1}$ metric $g = e^{-2u}g_0$ satisfying
		\begin{equation}\label{204}
		\lambda(g^{-1}A_g)\in\partial{\Gamma_2^+}\quad\text{a.e. on }M^n.
		\end{equation} 
		By Corollary \ref{504'}, it follows from \eqref{204} that there exists a smooth metric $g_t\in[g_0]$ satisfying $\lambda(g_t^{-1}A_{g_t}^t)\in\Gamma_2^+$ for each $t<1$. Moreover, the non-existence of a smooth metric $g\in[g_0]$ satisfying $\lambda(g^{-1}A_g)\in\Gamma_2^+$ on $M^n$ follows from \eqref{204} and the strong comparison principle. 
	\end{proof}

	The geometric significance of Theorem \ref{d'} is as follows. It is well-known in dimensions $n\geq 3$ that the Ricci tensor of a metric satisfying $\lambda(g^{-1}A_g^t)\in\Gamma_2^+$ on $M^n$ satisfies the following pointwise pinching property:
	\begin{equation}\label{1}
	\bigg(t - 2 + \frac{4}{n}\bigg)R_g g < 2\operatorname{Ric}_g < (2-t)R_g g
	\end{equation}
	(see e.g.~\cite[Proposition 5.2]{GV03} for a proof of this fact). In this way, Theorem \ref{d'} asserts the existence of conformal metrics satisfying \textit{pointwise} pinching of the Ricci tensor, given only an \textit{integral} pinching condition on a background metric of positive scalar curvature. We state this pinching result as a further corollary:
	
	\begin{cor}\label{23}
		Let $(M^n,g_0)$ be a smooth, closed Riemannian manifold of dimension $n\geq 3$ with $Y_{2,1}([g_0]) \geq 0$. Then for all $t<1$, there exists a smooth metric $g\in[g_0]$ satisfying $R_g>0$ and \eqref{1}.
	\end{cor}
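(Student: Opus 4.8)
The plan is to separate the two cases allowed by the hypothesis, $Y_{2,1}([g_0])>0$ and $Y_{2,1}([g_0])=0$; recall that the convention $Y_{2,1}([g_0])=-\infty$ whenever $Y(M^n,[g_0])\le 0$ means that $Y_{2,1}([g_0])\ge 0$ already forces $Y(M^n,[g_0])>0$. In each case I would exhibit a single smooth metric $g\in[g_0]$ with $\lambda(g^{-1}A_g^t)\in\Gamma_2^+$ on $M^n$, and then read off both conclusions of the corollary.

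The case $Y_{2,1}([g_0])=0$ is immediate from Theorem \ref{d'}, which for every $t<1$ produces a smooth $g_t\in[g_0]$ with $\lambda(g_t^{-1}A_{g_t}^t)\in\Gamma_2^+$. When $Y_{2,1}([g_0])>0$, I would first apply \cite[Theorem 2]{GLW10} to obtain a smooth $g\in[g_0]$ with $\lambda(g^{-1}A_g)\in\Gamma_2^+$, and then upgrade this to $\lambda(g^{-1}A_g^t)\in\Gamma_2^+$ for all $t<1$. For the upgrade, the transformation law \eqref{300} gives $g^{-1}A_g^t=g^{-1}A_g+sI$ with $s=\frac{1-t}{n-2}\,\sigma_1(\lambda(g^{-1}A_g))>0$ (positivity of $\sigma_1$ coming from $\Gamma_2^+\subseteq\Gamma_1^+$), and a one-line computation using $\sigma_2(\mu)=\tfrac12(\sigma_1(\mu)^2-|\mu|^2)$ shows $\Gamma_2^++\mathbb{R}_{\ge 0}e\subseteq\Gamma_2^+$; equivalently, this is the monotonicity $\Gamma_2^+=(\Gamma_2^+)^1\subseteq(\Gamma_2^+)^{\tau(t)}$ noted after \eqref{44} together with the equivalence of \eqref{79} and \eqref{44}.

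Once a smooth $g$ with $\lambda(g^{-1}A_g^t)\in\Gamma_2^+\subseteq\Gamma_1^+$ is in hand, taking the trace of \eqref{300} with respect to $g$ gives $\sigma_1(\lambda(g^{-1}A_g^t))=\frac{2(n-1)-tn}{2(n-1)(n-2)}R_g$, a strictly positive multiple of $R_g$ for $n\ge 3$ and $t<1$; hence $R_g>0$. The strict pinching \eqref{1} is then precisely the pointwise property recorded in \cite[Proposition 5.2]{GV03} for metrics satisfying $\lambda(g^{-1}A_g^t)\in\Gamma_2^+$.

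Since the analytic content is entirely carried by Theorem \ref{d'} and \cite[Theorem 2]{GLW10}, I do not expect a genuine obstacle here; the only point to handle carefully is the elementary translation step in the case $Y_{2,1}([g_0])>0$, namely the invariance of $\Gamma_2^+$ under adding nonnegative multiples of $e$.
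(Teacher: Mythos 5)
Your proof is correct and matches the paper's intended reasoning. The corollary is stated immediately after Theorem~\ref{d'} with only the observation that \eqref{1} follows from $\lambda(g^{-1}A_g^t)\in\Gamma_2^+$, so the reader is expected to split exactly as you do: the case $Y_{2,1}([g_0])=0$ is Theorem~\ref{d'}, while the case $Y_{2,1}([g_0])>0$ invokes \cite[Theorem 2, Case 1]{GLW10} to produce a smooth $g$ with $\lambda(g^{-1}A_g)\in\Gamma_2^+$ and then uses the monotonicity $\Gamma_2^+=(\Gamma_2^+)^1\subseteq(\Gamma_2^+)^{\tau(t)}$ (noted after \eqref{44}) to get $\lambda(g^{-1}A_g^t)\in\Gamma_2^+$ for all $t<1$. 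Your derivation of $R_g>0$ via the trace of $A_g^t$ and of \eqref{1} via \cite[Proposition 5.2]{GV03} is exactly what the paper's surrounding text points to.
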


	It is known that in dimension $n=3$, the pinching relation in \eqref{1} with $t\geq \frac{2}{3}$ implies that $M^3$ is diffeomorphic to a spherical space form.

	\subsection{Differential inclusions under positivity of Yamabe-type invariants}\label{52}
	
	In this section we give an application of Theorem \ref{201}, in which we prove the existence of a smooth conformal metric satisfying $\lambda(g^{-1}A_g^t)\in\Gamma_2^+$ on $M^n$ assuming positivity of a nonlinear Yamabe-type invariant. We restrict our attention to the case $n\geq 5$; for the cases $n=3$ and $n=4$ we refer to \cite{CGY02a, GV03, GLW10, CD10}. 
	
	More precisely, for $t\leq 1$ and $n\geq 5$ we define 
		\begin{equation}\label{30'}
	Y_2^t([g_0]) =  \inf_{g\in[g_0],\, R_g>0}  \frac{\int_{M^n}\sigma_2(\lambda(g^{-1}A_g^t))\,dv_g}{\operatorname{Vol}(M^n,g)^{\frac{n-4}{n}}},
	\end{equation}
	with the convention that $Y_{2}^t([g_0]) = -\infty$ if $Y(M^n,[g_0])\leq 0$. We prove:

	\begin{thm}\label{4'}
		Let $(M^n,g_0)$ be a smooth, closed Riemannian manifold of dimension $n\geq 5$, and suppose $t\leq 1$. If $Y_{2}^t([g_0]) >0$, then there exists a smooth metric $g\in[g_0]$ satisfying $\lambda(g^{-1}A_g^t)\in\Gamma_2^+$ on $M^n$.  
	\end{thm}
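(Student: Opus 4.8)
The plan is to derive Theorem~\ref{4'} from the Kazdan-Warner type result Theorem~\ref{201}. Set $\tau = \tau(t) = \big(1 + \tfrac{1-t}{n-2}\big)^{-1} \in (0,1]$ and $\Gamma := (\Gamma_2^+)^{\tau}$; as recorded after \eqref{44}, $\Gamma$ inherits \eqref{A} and \eqref{B}, and the identity $g^{-1}A_g^t = \tau^{-1}\big[\tau g^{-1}A_g + (1-\tau)\sigma_1(\lambda(g^{-1}A_g))I\big]$ shows that $\lambda(g^{-1}A_g^t) \in \Gamma_2^+$ if and only if $\lambda(g^{-1}A_g) \in \Gamma$, for every $g \in [g_0]$. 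By the convention in \eqref{30'}, $Y_2^t([g_0]) > 0$ forces $Y(M^n,[g_0]) > 0$, so Theorem~\ref{201} applies with this $\Gamma$: it suffices to show that no $C^{1,1}$ metric with a.e.\ nonnegative scalar curvature is a viscosity solution of $\lambda(g_{\hat{u}}^{-1}A_{g_{\hat{u}}}) \in \mathbb{R}^n \backslash \Gamma$ on $M^n$. By Remark~\ref{84} and the equivalence of cones above, this amounts to ruling out a $C^{1,1}$ metric $\hat{g} = e^{-2\hat{u}}g_0$ with $R_{\hat{g}} \geq 0$ a.e.\ and $\lambda(\hat{g}^{-1}A_{\hat{g}}^{t}) \in \mathbb{R}^n \backslash \Gamma_2^+$ a.e.\ on $M^n$.

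The key pointwise fact is that for such $\hat{g}$ one has $\sigma_2(\lambda(\hat{g}^{-1}A_{\hat{g}}^{t})) \leq 0$ a.e. Indeed, abbreviating $\lambda = \lambda(\hat{g}^{-1}A_{\hat{g}}^{t})$ at a point of differentiability, $\sigma_1(\lambda) = \operatorname{tr}(\hat{g}^{-1}A_{\hat{g}}^{t}) = \tfrac{2(n-1)-tn}{2(n-1)(n-2)}R_{\hat{g}}$, and $2(n-1)-tn \geq n-2 > 0$ since $t \leq 1$, so $R_{\hat{g}} \geq 0$ gives $\sigma_1(\lambda) \geq 0$. Combined with $\lambda \notin \Gamma_2^+ = \{\sigma_1 > 0,\ \sigma_2 > 0\}$, this forces $\sigma_2(\lambda) \leq 0$: if $\sigma_1(\lambda) > 0$ it is immediate, and if $\sigma_1(\lambda) = 0$ then $2\sigma_2(\lambda) = \sigma_1(\lambda)^2 - |\lambda|^2 = -|\lambda|^2 \leq 0$. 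Hence
\[
\int_{M^n} \sigma_2\big(\lambda(\hat{g}^{-1}A_{\hat{g}}^{t})\big)\,dv_{\hat{g}} \leq 0 .
\]

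It remains to contradict $Y_2^t([g_0]) > 0$, and this is the one step requiring genuine care, since $\hat{g}$ is only $C^{1,1}$ with $R_{\hat{g}} \geq 0$ merely a.e., whereas $Y_2^t$ is an infimum over \emph{smooth} metrics of everywhere-positive scalar curvature. I would handle this by approximation. Writing $\hat{g} = w^{4/(n-2)}g_0$ with $w \in C^{1,1}$, $w > 0$, the conformal Laplacian satisfies $f := L_{g_0}w \geq 0$ a.e.\ with $f \in L^\infty$. Fix a smooth $\bar w > 0$ with $L_{g_0}\bar w > 0$ (which exists since $Y(M^n,[g_0]) > 0$). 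Mollifying $w$ in local charts via a partition of unity, $w^{(\eta)} \to w$ in $W^{2,p}$ for every $p < \infty$ and $L_{g_0}w^{(\eta)} = f^{(\eta)} + o(1)$ uniformly on $M^n$ as $\eta \to 0$, where $f^{(\eta)} \geq 0$; hence, given $s > 0$, for $\eta$ small enough depending on $s$ the smooth metric $g_{s,\eta} := (w^{(\eta)} + s\bar w)^{4/(n-2)}g_0$ has $R_{g_{s,\eta}} > 0$ on $M^n$, while $g_{s,\eta} \to \hat{g}$ in $W^{2,p}$ as $s \to 0$ (with $\eta = \eta(s) \to 0$). Then $A_{g_{s,\eta}}^{t} \to A_{\hat{g}}^{t}$ in $L^p$ and $\operatorname{Vol}(M^n,g_{s,\eta}) \to \operatorname{Vol}(M^n,\hat{g})$, so
\[
\frac{\int_{M^n}\sigma_2\big(\lambda(g_{s,\eta}^{-1}A_{g_{s,\eta}}^{t})\big)\,dv_{g_{s,\eta}}}{\operatorname{Vol}(M^n,g_{s,\eta})^{\frac{n-4}{n}}} \longrightarrow \frac{\int_{M^n}\sigma_2\big(\lambda(\hat{g}^{-1}A_{\hat{g}}^{t})\big)\,dv_{\hat{g}}}{\operatorname{Vol}(M^n,\hat{g})^{\frac{n-4}{n}}} \leq 0,
\]
forcing $Y_2^t([g_0]) \leq 0$, a contradiction. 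Theorem~\ref{201} then yields a smooth $g \in [g_0]$ with $\lambda(g^{-1}A_g) \in \Gamma$, i.e.\ $\lambda(g^{-1}A_g^t) \in \Gamma_2^+$, on $M^n$. I expect this last approximation --- producing smooth approximants of everywhere-positive scalar curvature while keeping control of the $\sigma_2$-integral and the volume --- to be the only real obstacle; the remaining steps reduce to Theorem~\ref{201} together with the elementary implication that $\sigma_1 \geq 0$ and $\lambda \notin \Gamma_2^+$ force $\sigma_2 \leq 0$.
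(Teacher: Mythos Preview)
Your proposal is correct and follows essentially the same route as the paper: both reduce to Theorem~\ref{201} (with the cone $\Gamma=(\Gamma_2^+)^\tau$, so that $\lambda(g^{-1}A_g)\in\Gamma$ is equivalent to $\lambda(g^{-1}A_g^t)\in\Gamma_2^+$), and both handle the one genuine issue by approximating a $C^{1,1}$ metric with a.e.~nonnegative scalar curvature by smooth metrics with strictly positive scalar curvature while controlling the $\sigma_2$-integral and volume. The paper performs the approximation slightly differently---it mollifies $f:=L_{g_0}w\in L^\infty$, $f\geq 0$, and then \emph{solves} the linear equation $L_{g_0}u_i=f_i$ (using $R_{g_0}>0$ and the maximum principle to get $u_i>0$), so that $u_i\to u$ in $W^{2,p}$ by elliptic regularity---whereas you mollify $w$ itself and add a corrector $s\bar w$ with $L_{g_0}\bar w>0$ to force $R>0$; both schemes give $W^{2,p}$ convergence and hence convergence of the quotient, and your variant has the small advantage that strict positivity of the scalar curvature is immediate.
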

	
	\begin{rmk}
		In the case $t=1$, Theorem \ref{4'} was previously established by Sheng in \cite{She08}. In this case, Theorem \ref{4'} implies the result of Ge, Lin \& Wang \cite{GLW10} (discussed in the previous subsection) in dimensions $n\geq 5$, since $Y_{2,1}([g_0])>0$ implies $Y_2^1([g_0]) \geq CY_{2,1}([g_0])Y(M^n,[g_0])^{\frac{n-4}{n-2}}>0$. In the case $t<1$, a related statement was also proved in \cite{She08}, although a condition different to $Y_2^t([g_0])>0$ was considered therein.
	\end{rmk}

	\begin{proof}[Proof of Theorem \ref{4'}]
		By Theorem \ref{201} and the fact that $C^{1,1}$ functions are a.e.~punctually second order differentiable, it suffices to show that if $Y_{2}^t([g_0]) >0$, then the RHS of \eqref{30'} is still positive if the infimum is instead taken over the set of all metrics $g = {u}^{\frac{4}{n-2}}g_0$ with $0<{u}\in C^{1,1}(M^n)$ and $R_g\geq0$ a.e.~on $M^n$. By the definition of $Y_{2}^t([g_0])$, we may assume that $R_{g_0}>0$. 
		
		To this end, let $g$ be such a metric. If $L_{g_0}$ denotes the conformal Laplacian of $g_0$, then $u$ satisfies the equation
		\begin{equation*}
		L_{g_0} u  = f \defeq R_{g}u^{\frac{n+2}{n-2}}\quad \text{a.e. on }M^n. 
		\end{equation*}
		Observe that $f\geq 0$ is not identically zero, otherwise we would have $u\equiv 0$ in view of the fact $R_{g_0}>0$. Since $f\geq 0$ and $f$ belongs to $L^\infty(M^n)$, we may convolve $f$ with nonnegative smooth mollifiers to obtain a sequence $f_i$ of smooth, nonnegative functions on $M^n$ such that $f_i\rightarrow f$ in $L^p(M^n,g_0)$ for all $p<\infty$. After passing to a subsequence, we may assume that none of the $f_i$ are identically zero. Let $u_i$ denote the corresponding smooth solutions to 
		\begin{equation*}
		L_{g_0} u_i = f_i \quad \text{on }M^n. 
		\end{equation*}
		Note that, since $R_{g_0}>0$ and the $f_i\geq 0$ are not identically zero, the maximum principle implies $u_i>0$. Moreover, since $f_i\rightarrow f$ in $L^p(M^n,g_0)$ for all $p<\infty$, standard elliptic theory implies that $u_i\rightarrow u$ in $W^{2,p}(M^n,g_0)$ for all $p<\infty$, and hence
		\begin{equation*}
		Y_{2}^t([g_0]) \leq \frac{\int_{M^n}\sigma_2(\lambda(g_{u_i}^{-1}A^t_{g_{u_i}}))\,dv_{g_{u_i}}}{\operatorname{Vol}(M^n,g_{u_i})^\frac{n-4}{n}} \longrightarrow \frac{\int_{M^n}\sigma_2(\lambda(g_{u}^{-1}A^t_{g_{u}}))\,dv_{g_{u}}}{\operatorname{Vol}(M^n,g_u)^{\frac{n-4}{n}}}
		\end{equation*}
		as $i\rightarrow \infty$, as required.
	\end{proof}

	\subsection{Conformal invariance of the sign of $\lambda(\sigma_2,g_0)$ (Theorem \ref{61'})}\label{801}
	
	In this section we prove Theorem \ref{61'}. In \cite{GLW10}, the authors considered the following nonlinear eigenvalue for the $\sigma_2$ operator:
	
	\begin{equation*} 
	\lambda(\sigma_2,g_0) \defeq \begin{cases}
	\begin{aligned} &\sup_{g=e^{-2u}g_0,\, R_g>0}\frac{\int_{M^3}\sigma_2(\lambda(g^{-1}A_g))\,dv_g}{\int_{M^3}e^{4u}\,dv_g}   & \quad  \text{if }n=3 \\[5pt]
	&\int_{M^4}\sigma_2(\lambda(g^{-1}A_g))\,dv_g & \quad \text{if }n=4 \\[5pt]
	&\inf_{g=e^{-2u}g_0,\,R_g>0}\frac{\int_{M^n}\sigma_2(\lambda(g^{-1}A_g))\,dv_g}{\int_{M^n}e^{4u}\,dv_g}   & \quad  \text{if }n > 5.
	\end{aligned} 
	\end{cases} \bigskip
	\end{equation*}

	The interpretation of $\lambda(\sigma_2,g_0)$ as a nonlinear eigenvalue comes from \cite[Theorem 1]{GLW10}, where it is shown that if $\lambda(\sigma_2,g_0) >0$, then $\lambda(\sigma_2,g_0)$ is achieved by a smooth metric $g_u=e^{-2u}g_0$ of positive scalar curvature satisfying $\sigma_2(\lambda(g_u^{-1}A_{g_u})) = \lambda e^{4u}$ on $M^n$. It is also shown that when $\lambda(\sigma_2,g_0)=0$, $\lambda(\sigma_2,g_0)$ is achieved by a $C^{1,1}$ metric $g_u$ of nonnegative scalar curvature satisfying $\sigma_2(\lambda(g_u^{-1}A_{g_u})) = 0$ a.e.~on $M^n$. 
	
	In analogy with the case for the scalar curvature, where the sign of the Yamabe invariant of $[g_0]$ coincides with the sign of the first eigenvalue of the conformal Laplacian of any metric in $[g_0]$, one may expect a relationship between the signs of $Y_{2,1}([g_0])$ and $\lambda(\sigma_2,g_0)$. It is shown in \cite[Lemma 3]{GLW10} that the sign of $Y_{2,1}([g_0])$ coincides with the sign of $\lambda(\sigma_2,g_0)$ when $n\geq 4$. For $n=3$, it is also shown $\lambda(\sigma_2,g_0)>0$ if and only if $Y_{2,1}([g_0])>0$, $\lambda(\sigma_2,g_0) \leq 0$ if and only if $Y_{2,1}([g_0]) \leq 0$, and $\lambda(\sigma_2,g_0)<0$ implies $Y_{2,1}([g_0])<0$. Using our existence result in Theorem \ref{a}, we prove the remaining implication and hence establish Theorem \ref{61'}:
	
	\begin{proof}[Proof of Theorem \ref{61'}]
		In light of the discussion above, we only need to show that $\lambda(\sigma_2,g_0)=0$ implies $Y_{2,1}([g_0])=0$. For shorthand we denote
		\begin{equation*}
		F[g] = \int_{M^3}\sigma_2(\lambda(g^{-1}A_g))\,dv_g\int_{M^3}\sigma_1(\lambda(g^{-1}A_g))\,dv_g
		\end{equation*}
		so that $Y_{2,1}([g_0]) = \sup_{g\in[g_0],\,R_g>0}F[g]$. 
		
		For $\tau<1$, let $g_{u_\tau} = e^{-2u_\tau}g_0$ be the solutions obtained to \eqref{ee1'} in the proof of Theorem \ref{a} in the case that $f = \sigma_2^{1/2}$ and $Y(M^3,[g_0])>0$. A routine calculation yields
		\begin{equation*}
		\sigma_2(\lambda(g_{u_\tau}^{-1}A_{g_{u_\tau}}))  =  (1-\tau)c(n,\tau)[\sigma_1(\lambda(g_{u_\tau}^{-1}A_{g_{u_\tau}}))]^2 -  \tau^{-2}\sigma_2^\tau(\lambda(g_{u_\tau}^{-1}A_{g_{u_\tau}})),
		\end{equation*}
		where $c(n,\tau) = \frac{1}{2}\tau^{-2}(n-1)(2\tau+n(1-\tau))$ is bounded as $\tau\rightarrow 1$. Therefore
		\begin{align}
		F[g_{u_\tau}] & > -C(1-\tau) \int_{M^3}[\sigma_1(\lambda(g_{u_\tau}^{-1}A_{g_{u_\tau}}))]^2\,dv_{g_{u_\tau}}\int_{M^3}\sigma_1(\lambda(g_{u_\tau}^{-1}A_{g_{u_\tau}}))\,dv_{g_{u_\tau}}, \label{8}
		\end{align}
		where here and for the remainder of the proof, $C$ is a constant that remains bounded as $\tau\rightarrow 1$ (but may change from line to line). 
		
		Next, observe that we have the identities
		\begin{align*}
		R_{g_{u_\tau}}^2\,dv_{g_{u_\tau}} & = e^{u_\tau}\big(R_{g_0} + 4\Delta_{g_0} u_\tau - 2|\nabla_{g_0} u_\tau|_{g_0}^2\big)^2\,dv_{g_0}, \nonumber \\
		R_{g_{u_\tau}}\,dv_{g_{u_\tau}} & = e^{-u_\tau}(R_{g_0}+ 4\Delta_{g_0} u_\tau - 2|\nabla_{g_0} u_\tau|_{g_0}^2\big)\,dv_{g_0}.
		\end{align*}
		Moreover, by the first and second derivative estimates obtained on $u_\tau$ in Step 1 of the proof of Proposition \ref{E} (which, we recall, hold uniformly for $\tau\leq 1$), we have $0< R_{g_0} + 4\Delta_{g_0} u_\tau - 2|\nabla_{g_0}u_\tau|_{g_0}^2 \leq C$, and it then follows from the above that
		\begin{equation}\label{3}
		F[g_{u_\tau}] \geq -C(1-\tau) \int_{M^3}e^{u_\tau}\,dv_{g_0} \int_{M^3}e^{-u_\tau}\,dv_{g_0}.
		\end{equation}
		
		We claim that the product of integrals in \eqref{3} is bounded independently of $\tau\leq 1$. Once this is established, it will follow that $F[g_{u_\tau}] \geq -C(1-\tau)$. Since we already know by \cite[Lemma 3]{GLW10} that $Y_{2,1}([g_0]) = \sup_{g\in[g_0],\,R_g>0} F[g] \leq 0$, and since $\tau<1$ is arbitrary, the conclusion $Y_{2,1}([g_0])=0$ then follows.
		
		To prove the claim, observe by the uniform first derivative estimates obtained on $u_\tau$ in Step 1 of the proof of Proposition \ref{E} (which are uniform for $\tau\leq 1$), there exists a constant $C$ such that $|u_\tau(x) - u_\tau(y)| \leq C$ for all $x,y\in M^3$ and all $\tau<1$, i.e. 
		\begin{equation}\label{302}
		-C - u_\tau(y) \leq -u_\tau(x) \leq C - u_\tau(y) \quad \text{for all } x,y\in M^3. 
		\end{equation}
		Taking exponentials in \eqref{302} (which preserves the inequalities) and then integrating against $dv_{g_0}(y)$ gives 
		\begin{equation*}
		0 < C^{-1}\int_{M^3} e^{-u_\tau}\,dv_{g_0} \leq e^{-u_\tau(x)} \leq C\int_{M^3} e^{-u_\tau}\,dv_{g_0} \quad  \text{for all } x\in M^3, 
		\end{equation*}
		or equivalently 
		\begin{equation*}
		0 < C^{-1} \leq e^{u_\tau(x)} \int_{M^3} e^{-u_\tau}\,dv_{g_0} \leq C \quad  \text{for all }x\in M^3. 
		\end{equation*}
		Integrating against $dv_{g_0}(x)$ then gives
		\begin{equation*}
		0 < C^{-1} \leq \int_{M^3} e^{u_t}\,dv_{g_0} \int_{M^3}e^{-u_t}\,dv_{g_0} \leq C,
		\end{equation*}
		as claimed. This completes the proof. 
	\end{proof} 
	
	\subsection{A generalisation of  a theorem of Aubin \& Ehrlick (Theorem \ref{700})}\label{802}

	\begin{proof}[Proof of Theorem \ref{700}]
		The assumption \eqref{312} is equivalent to $\lambda(\hat{g}^{-1}A_{\hat{g}}^t)\in\overline{\Gamma_n^+}$ in the viscosity sense on $M^n$ for $t = \frac{\alpha}{2(n-1)}$. Since we assume that $\hat{g}$ is not a $C^{1,1}$ solution to $\lambda(\hat{g}^{-1}A_{\hat{g}}^t)\in\partial\Gamma_n^+$ a.e.~on $M^n$, the desired conclusion follows from Theorem \ref{e''} and Remark \ref{505''}, since $t<1$.
	\end{proof}

	As a by-product of the proof of Theorem \ref{700}, the metric $g$ obtained in Theorem \ref{700} satisfies $\operatorname{det}(g_0^{-1}(\operatorname{Ric}_g - \alpha R_g g)) = c$ on $M^n$ for some constant $c>0$. When $0 \leq \alpha < \frac{1}{2(n-1)}$, as an alternative to the above proof, one can use \cite{LN14} together with the strong comparison principle Theorem \ref{t20} to solve the trace-modified $\sigma_n$-Yamabe equation $\operatorname{det}(g^{-1}(\operatorname{Ric}_g - \alpha R_g g))=1$ on $M^n$ as follows: the proof of \cite[estimate (9)]{LN14} shows that, as $\alpha\geq 0$, all solutions to the trace-modified $\sigma_n$-Yamabe equation are bounded from below. If they are not bounded from above, then a suitably rescaled limit will yield a $C^{1,1}$ solution to the equation $\lambda(g^{-1}(\operatorname{Ric}_g - \alpha R_g g))\in\partial\Gamma_n^+$. By the strong comparison principle in Theorem \ref{t20}, this contradicts the assumptions on $\hat{g}$ in Theorem \ref{312}. Existence then follows from a degree theory argument.

	\subsection{Differential inclusions vs.~nonlinear Green's functions (Theorem \ref{305})}\label{803}

	\begin{proof}[Proof of Theorem \ref{305}]
		We assume for simplicity that the Green's function \linebreak $w\in C^0_{\operatorname{loc}}(M^n\backslash \{p\})$ has a single pole at $p\in M^n$, of fixed but arbitrary strength. Observe that, if $v$ is a positive function on $M^n$ satisfying $\nabla_{g_0}v(p) = 0$ and $\nabla_{g_0}^2 v(p)<0$, then by \eqref{301} with $u=-\frac{2}{n-2}\log v$, the Schouten tensor of the metric $\alpha v^{\frac{4}{n-2}}g_0$ is positive definite near $p$ for $\alpha$ large. In particular, after making a change of background metric if necessary, we may assume that $g_0$ satisfies $\lambda(g_0^{-1}A_{g_0})\in\Gamma$ on some small geodesic ball $B_\ep(p)$ centred at $p$. 
		
		Fix a large constant $M>1+\max_{M^n\backslash B_\ep(p)}w$ and define $\widetilde{w}=\min\{w,M\}$. Clearly $\widetilde{w}$ is continuous, and we claim that the metric $\widetilde{g} = \widetilde{w}^{\frac{4}{n-2}}g_0$ satisfies $\lambda(\widetilde{g}^{-1}A_{\widetilde{g}})\in\overline{\Gamma}$ in the viscosity sense on $M^n$. Once this claim is established, the existence of a smooth metric $g\in[g_0]$ satisfying $\lambda(g^{-1}A_g)\in\Gamma$ on $M^n$ follows immediately from Theorem \ref{a3}. 
		
		To this end, fix $x\in M^n$ and suppose that $\phi\in C^2(M^n)$ is a positive function touching $\widetilde{w}$ from below at $x$. To prove the claim, we need to show that the metric $\bar{g} = \phi^{\frac{4}{n-2}}g_0$ satisfies $\lambda(\bar{g}^{-1}A_{\bar{g}})(x)\in\overline{\Gamma}$. There are three cases to consider: \medskip 
		
		\textit{Case 1:} $x\in\{w<M\}$. Since $w$ is continuous away from $p$, the set $\{w<M\}$ is open and hence $\widetilde{w}\equiv w$ in a neighbourhood of $x$. Therefore, $\phi$ also touches $w$ from below at $x$, and it follows from the fact that $w$ is a viscosity subsolution that $\lambda(\bar{g}^{-1}A_{\bar{g}})(x)\in\overline{\Gamma}$. \medskip
		
		\textit{Case 2:} $x\in \{w>M\}$. Since $\widetilde{w}(x)= M$ and $\lambda(g_0^{-1}A_{g_0})\in\Gamma$ in $B_\ep(p)$, it follows in this case that $\lambda(\widetilde{g}^{-1}A_{\widetilde{g}})(x)\in\overline{\Gamma}$. Consequently, $\lambda(\bar{g}^{-1}A_{\bar{g}})(x)\in\overline{\Gamma}$. \medskip
		
		\textit{Case 3:} $x\in\{w=M\}$. We have $\phi(x) = \widetilde{w}(x) = M  = w(x)$ and $\phi < \widetilde{w} \leq w$ near $x$ (note $\widetilde{w}\leq w$ on all of $M^n$, by definition of $\widetilde{w}$). Therefore, $\phi$ touches $w$ from below at $x$, and the conclusion follows as in Case 1. 
	\end{proof}

	\begin{appendix}
		\section{A characterisation of cones satisfying $(1,0,\dots,0)\in\Gamma$}\label{appb}
		
		In this appendix we give an equivalent characterisation of cones $\Gamma$ satisfying \eqref{A}, \eqref{B} and $(1,0,\dots,0)\in\Gamma$:
		
		\begin{prop}\label{60}
			Suppose $\Gamma$ satisfies \eqref{A} and \eqref{B}. Then $(1,0,\dots,0)\in\Gamma$ if and only if there exists $\widetilde{\Gamma}$ satisfying \eqref{A} and \eqref{B} and a number $\tau<1$ for which $\Gamma = (\widetilde{\Gamma})^\tau$. 
		\end{prop}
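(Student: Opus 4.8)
\emph{Proof proposal.} The key observation to exploit is that, for $\tau\in(0,1)$, the map $\lambda\mapsto\lambda^\tau$ is the \emph{invertible linear} map $L_\tau:=\tau I+(1-\tau)P$ on $\mathbb{R}^n$, where $P\lambda:=\sigma_1(\lambda)e$; it commutes with coordinate permutations (since $P$ does), and $(\widetilde\Gamma)^\tau=L_\tau^{-1}(\widetilde\Gamma)$. With this in hand the ``if'' direction is immediate: if $\Gamma=(\widetilde\Gamma)^\tau$ with $\widetilde\Gamma$ satisfying \eqref{A}--\eqref{B} and $\tau<1$, then $(1,0,\dots,0)^\tau=(1,1-\tau,\dots,1-\tau)$ has all coordinates positive, hence lies in $\Gamma_n^+\subseteq\widetilde\Gamma$, so $(1,0,\dots,0)\in(\widetilde\Gamma)^\tau=\Gamma$ (and for $\tau=0$ one has $(\widetilde\Gamma)^0=\Gamma_1^+\ni(1,0,\dots,0)$).

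For the ``only if'' direction I would argue as follows. Assume $(1,0,\dots,0)\in\Gamma$, so by symmetry every standard basis vector $e_i$ lies in $\Gamma$. For a parameter $\tau\in(0,1)$ to be chosen, define $\widetilde\Gamma:=L_\tau\Gamma$. Then $(\widetilde\Gamma)^\tau=L_\tau^{-1}(L_\tau\Gamma)=\Gamma$ holds automatically by invertibility, so the entire task reduces to choosing $\tau$ so that $\widetilde\Gamma$ inherits \eqref{A} and \eqref{B}. Property \eqref{A} is automatic for every $\tau\in(0,1)$, since $\widetilde\Gamma$ is the image of an open, convex, connected, symmetric cone with vertex at the origin under an invertible linear map that commutes with permutations. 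The upper inclusion in \eqref{B} is also automatic: for $\mu=L_\tau\lambda$ with $\lambda\in\Gamma\subseteq\Gamma_1^+$ one computes $\sigma_1(\mu)=(\tau+n(1-\tau))\sigma_1(\lambda)>0$, so $\widetilde\Gamma\subseteq\Gamma_1^+$.

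The remaining point, $\Gamma_n^+\subseteq\widetilde\Gamma$, is equivalent to $L_\tau^{-1}(\Gamma_n^+)\subseteq\Gamma$, and this is the only step where the hypothesis and the choice of $\tau$ are genuinely used. A short computation gives $L_\tau^{-1}e_i=\tfrac{1}{\tau}e_i-\tfrac{1-\tau}{\tau\,(n-(n-1)\tau)}\,e$, which tends to $e_i$ as $\tau\to 1$. Since $\Gamma$ is open and contains each $e_i$, there is a $\tau_0<1$ such that $L_\tau^{-1}e_i\in\Gamma$ for all $i$ and all $\tau\in[\tau_0,1)$; fix such a $\tau$. Because $\Gamma_n^+$ consists of the strictly positive combinations of $e_1,\dots,e_n$ and $L_\tau^{-1}$ is linear, every element of $L_\tau^{-1}(\Gamma_n^+)$ is a strictly positive combination of $L_\tau^{-1}e_1,\dots,L_\tau^{-1}e_n$; as $\Gamma$ is a convex cone containing all of these vectors, it contains all such combinations, giving $L_\tau^{-1}(\Gamma_n^+)\subseteq\Gamma$. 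This verifies \eqref{B} for $\widetilde\Gamma$ and completes the proof.

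I expect this last step, the inclusion $\Gamma_n^+\subseteq\widetilde\Gamma$, to be the only nonformal point: \eqref{A} and the upper half of \eqref{B} cost nothing for any $\tau\in(0,1)$, whereas $\Gamma_n^+\subseteq\widetilde\Gamma$ forces $\tau$ to be taken close to $1$ and is precisely where $(1,0,\dots,0)\in\Gamma$ enters, through the symmetry and openness of $\Gamma$.
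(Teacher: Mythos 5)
Your proof is correct, and in the ``only if'' direction it takes a somewhat different (and cleaner) route than the paper, though both rest on the same two ideas: the linearity of $\lambda\mapsto\lambda^\tau$, and reducing the key inclusion $\Gamma_n^+\subseteq\widetilde{\Gamma}$ to a perturbative statement using the openness of $\Gamma$ near $(1,0,\dots,0)$. The structural difference is in how $\widetilde{\Gamma}$ is parametrised. You set $\widetilde{\Gamma}=L_\tau\Gamma$ for one parameter $\tau<1$ near $1$, so that $\Gamma=(\widetilde{\Gamma})^\tau=L_\tau^{-1}L_\tau\Gamma$ holds tautologically with the \emph{same} $\tau$, and you verify $\Gamma_n^+\subseteq\widetilde{\Gamma}$ by checking $L_\tau^{-1}e_i\in\Gamma$ and then using that $\Gamma$ is a convex cone. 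The paper instead sets $\widetilde{\Gamma}=\Gamma^{\tau'}=L_{\tau'}^{-1}(\Gamma)$ for a parameter $\tau'>1$ near $1$, which forces two extra steps: it must first observe $\Gamma^{\tau'}\subseteq\Gamma$ (to get $\widetilde{\Gamma}\subseteq\Gamma_1^+$, where you instead compute $\sigma_1(L_\tau\lambda)=(\tau+n(1-\tau))\sigma_1(\lambda)>0$ directly), and it must then \emph{find} the value $\widetilde{\tau}<1$ for which $\Gamma=(\widetilde{\Gamma})^{\widetilde{\tau}}$, via the coefficient calculation in equations \eqref{36}--\eqref{38} leading to the explicit formula \eqref{37}. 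Your choice of $\widetilde{\Gamma}$ makes that final identification automatic, so the net effect is a shorter verification with no loss of content; the paper's route, on the other hand, produces the explicit formula relating $\tau'$ and $\widetilde{\tau}$, which could be of independent use but is not needed for the statement. Both proofs are complete and correct.
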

		\begin{proof}
			Suppose first of all that $\Gamma  = (\widetilde{\Gamma})^\tau$ for some $\widetilde{\Gamma}$ satisfying \eqref{A} and \eqref{B} and some $\tau<1$. Then 
			\begin{equation}\label{600}
			\tau(1,0,\dots,0) + (1-\tau)\sigma_1(1,0,\dots,0) e = (1,1-\tau,\dots,1-\tau)\in\Gamma_n^+,
			\end{equation}
			and the assertion $(1,0,\dots,0)\in\Gamma$ then follows from \eqref{600} and the assumption $\Gamma_n^+ \subseteq \Gamma$. 
			
			Conversely, suppose $\Gamma$ satisfies \eqref{A}, \eqref{B} and $(1,0,\dots,0)\in\Gamma$. We first claim that for $\tau'>1$ sufficiently close to 1, $\Gamma^{\tau'}$ satisfies \eqref{A} and \eqref{B}. In fact, \eqref{A} is immediate, and the inclusion $\Gamma^{\tau'}\subseteq\Gamma_1^+$ in \eqref{B} follows from the fact $\Gamma^{\tau'}\subseteq \Gamma \subseteq \Gamma_1^+$. So it remains to show that $\Gamma_n^+\subseteq\Gamma^{\tau'}$ for $\tau'>1$ sufficiently close to 1. To this end, first observe that for $\tau'>1$ sufficiently close to 1, $(1,0,\dots,0)\in\Gamma^{\tau'}$. Indeed, $(1,0,\dots,0)\in\Gamma^{\tau'}$ if and only if $(1,1-\tau',\dots,1-\tau')\in\Gamma$, and this latter inclusion is clearly seen to hold for $\tau'>1$ sufficiently close to 1, using openness of $\Gamma$ and the fact that $(1,0,\dots,0)\in\Gamma$. Now, since $\Gamma^{\tau'}$ is convex, symmetric and satisfies $(1,0,\dots,0)\in \Gamma^{\tau'}$, the convex hull of all permutations of $(1,0,\dots,0)$ is also contained in $\Gamma^{\tau'}$. By homothety, it then follows that $\Gamma_n^+\subseteq\Gamma^{\tau'}$, as required.

			By the previous paragraph, we may fix $\tau'>1$ sufficiently close to 1 so that $\Gamma^{\tau'}$ satisfies \eqref{A} and \eqref{B}. Then define $\widetilde{\Gamma} = \Gamma^{\tau'}$, and observe that $\Gamma = (\widetilde{\Gamma})^{\widetilde{\tau}}$ for 
			\begin{equation}\label{37}
			\widetilde{\tau} = \frac{n-(n-1)\tau'}{(n-1) - (n-2)\tau'}<1.
			\end{equation}
			Indeed, for any $\tau$, $\lambda \in (\widetilde{\Gamma})^\tau$ if and only if $\tau\lambda + (1-\tau)\sigma_1(\lambda) e \in \widetilde{\Gamma}$ which occurs if and only if 
			\begin{equation}\label{36}
			\tau'\big[\tau\lambda + (1-\tau)\sigma_1(\lambda)e\big] + (1-\tau')\sigma_1\big[\tau\lambda + (1-\tau)\sigma_1(\lambda)e\big]e\in\Gamma, 
			\end{equation}
			since $\widetilde{\Gamma} = \Gamma^{\tau'}$. Collecting coefficients, we see that \eqref{36} is equivalent to 
			\begin{equation}\label{38}
			\tau'\tau \lambda + \big[\tau'(1-\tau) + \tau(1-\tau') + n(1-\tau)(1-\tau')\big]\sigma_1(\lambda) e \in \Gamma. 
			\end{equation}
			Taking $\tau = \widetilde{\tau}$ in \eqref{38}, we see that quantity in the square parentheses vanishes, and hence  $\lambda\in(\widetilde{\Gamma})^{\widetilde{\tau}}$ if and only if $\tau'\widetilde{\tau}\lambda\in\Gamma$, which occurs if and only if $\lambda \in \Gamma$ (since $a\Gamma = \Gamma$ for all $a>0$). 
		\end{proof}
	\end{appendix}

	\noindent\textbf{Acknowledgements:} Part of this work was carried out as part of JD's DPhil thesis, which was supported by EPSRC grant number EP/L015811/1. Both authors would like to thank Yuxin Ge, YanYan Li and Guofang Wang for insightful comments and useful suggestions.

	\bibliography{references}{}
	\bibliographystyle{siam}

\end{document}